\newtheorem{theorem}{Theorem}[section]
\newtheorem{corollary}[theorem]{Corollary}
\newtheorem{lemma}[theorem]{Lemma}
\newtheorem{proposition}[theorem]{Proposition}
\newtheorem{example}[theorem]{Example}
\begin{document}
\title{\bf Minimal cut-sets in the power graphs of certain finite non-cyclic groups}
\author{Sriparna Chattopadhyay\footnote{Supported by SERB NPDF scheme (File No. PDF/2017/000908), Department of Science and Technology, Government of India} \hskip 0.5cm Kamal Lochan Patra \hskip 0.5cm Binod Kumar Sahoo}
%\date{}
\maketitle

\begin{abstract}
The power graph of a group is the simple graph with vertices as the group elements, in which two distinct vertices are adjacent if and only if one of them can be obtained as an integral power of the other. We study (minimal) cut-sets of the power graph of a (finite) non-cyclic (nilpotent) group which are associated with its maximal cyclic subgroups. Let $G$ be a finite non-cyclic nilpotent group whose order is divisible by at least two distinct primes. If $G$ has a Sylow subgroup which is neither cyclic nor a generalized quaternion $2$-group and all other Sylow subgroups of $G$ are cyclic, then under some conditions
we prove that there is only one minimum cut-set of the power graph of $G$. We apply this result to find the vertex connectivity of the power graphs of certain finite non-cyclic abelian groups whose order is divisible by at most three distinct primes. \\

\noindent {\bf Key words:} Nilpotent group, Generalized quaternion group, Power graph, Minimal cut-set, Vertex connectivity \\
{\bf AMS subject classification.} 05C25, 05C40, 20K99
\end{abstract}

\section{Introduction}

Let $\Gamma$ be a simple graph with vertex set $V$. A subset $X$ of $V$ is called a (vertex) {\it cut-set} of $\Gamma$ if the induced subgraph of $\Gamma$ with vertex set $V\setminus X$ is  disconnected. So $|V\setminus X|\geq 2$ for any cut-set $X$ of $\Gamma$. A cut-set $X$ is called a {\it minimal cut-set} if $X\setminus \{x\}$ is not a cut-set of $\Gamma$ for any $x\in X$. If $X$ is a minimal cut-set of $\Gamma$, then any proper subset of $X$ is not a cut-set of $\Gamma$. A cut-set $X$ of $\Gamma$ is called a {\it minimum cut-set} if $|X|\leq |Y|$ for any cut-set $Y$ of $\Gamma$. Clearly, every minimum cut-set of $\Gamma$ is also a minimal cut-set. The {\it vertex connectivity} of $\Gamma$, denoted by $\kappa(\Gamma)$, is the minimum number of vertices which need to be removed from $V$ so that the induced subgraph of $\Gamma$ on the remaining vertices is disconnected or has only one vertex. The latter case arises only when $\Gamma$ is a complete graph. If $\Gamma$ is not a complete graph and $X$ is a minimum cut-set of $\Gamma$, then $\kappa(\Gamma)=|X|$. A {\it separation} of $\Gamma$ is a pair $(A,B)$, where $A,B$ are disjoint non-empty subsets of $V$ whose union is $V$ and there is no edge of $\Gamma$ containing vertices from both $A$ and $B$. Thus, $\Gamma$ is disconnected if and only if there exists a separation of it. We refer to \cite{BM} for the unexplained terminology from graph theory used in this paper.

\subsection{Power graph}

The notion of directed power graph of a group was introduced in \cite{ker1}, which was further extended to semigroups in \cite{ker1-1, ker2}. Then the undirected power graph of a semigroup, in particular, of a group was defined in \cite{ivy}. Several researchers have studied both the directed and undirected power graphs of groups from different viewpoints. More on these graphs can be found in the survey paper \cite{survey} and the references therein.

The {\it power graph} $\mathcal{P}(G)$ of a group $G$ is the simple (undirected) graph with vertex set $G$, in which two distinct vertices are adjacent if and only if one of them can be obtained as an integral power of the other. Thus two distinct vertices $x,y\in G$ are adjacent if and only if $x\in\langle y\rangle$ or $y\in\langle x\rangle$. By definition, every non-identity element $x$ of $G$ is adjacent to $x^0=1$ (the identity element of $G$) and so $\mathcal{P}(G)$ is always connected.

The power graph of a finite group is complete if and only if the group is cyclic of prime power order \cite[Theorem 2.12]{ivy}. It was proved in \cite[Theorem 1.3]{cur} and \cite[Corollary 3.4]{cur-1} that, among all finite groups of a given order, the cyclic group of that order has the maximum number of edges and has the largest clique in its power graph.

For a subset $A$ of $G$, we denote by $\mathcal{P}(A)$ the induced subgraph of $\mathcal{P}(G)$ with vertex set $A$. The subgraph $\mathcal{P}^{\ast}(G)=\mathcal{P}(G\setminus\{1\})$ of $\mathcal{P}(G)$ is called the {\it proper power graph} of $G$. In \cite{mog}, the authors proved connectedness of the proper power graph of certain groups. For the dihedral group $D_{2n}$ of order $2n$, the identity element is a cut-vertex of $\mathcal{P}(D_{2n})$ and so $\mathcal{P}^{\ast}(D_{2n})$ is disconnected. If $G$ is one of the groups $PGL(2,p^n)$ ($p$ an odd prime), $PSL(2,p^n)$ ($p$ prime), or a Suzuki group $Sz(2^{2n+1})$, then $\mathcal{P}^{\ast}(G)$ is disconnected \cite[Theorems 3.5--3.7]{doos}. In \cite[Section 4]{doos}, the authors proved that $\mathcal{P}^{\ast}(S_n)$ and $\mathcal{P}^{\ast}(A_n)$ are disconnected for many values of $n$, where $S_n, A_n$ are the symmetric and alternating groups respectively.

\subsection{Vertex connectivity}

For any given group, determining the vertex connectivity of its power graph is an interesting problem. Clearly, any cut-set of the power graph contains the identity element of the group. So the vertex connectivity of the power graph is $1$ if and only if the group is of order $2$ or its proper power graph is disconnected. We recall a few results on the vertex connectivity of the power graph of finite $p$-groups and cyclic groups. If $G$ is a cyclic $p$-group, then $\mathcal{P}(G)$ is a complete graph and so $\kappa\left(\mathcal{P}\left(G\right)\right)=|G|-1$. If $G$ is a dicyclic group (in particular, a generalized quaternion $2$-group), then the set consisting of the identity element and the unique involution of $G$ is a minimum cut-set of $\mathcal{P}(G)$ and so $\kappa\left(\mathcal{P}\left(G\right)\right)=2$ \cite[Theorem 7]{sri}. If $G$ is a finite $p$-group, then $\mathcal{P}^{\ast}(G)$ is connected if and only if $G$ is either cyclic or a generalized quaternion $2$-group by \cite[Corollary 4.1]{mog} (also see \cite[Theorem 2.6 (1)]{doos}). In particular, $\kappa(\mathcal{P}(G))=1$ if $G$ is a finite non-cyclic abelian $p$-group, in this case the number of connected components of $\mathcal{P}^{\ast}(G)$ is obtained in \cite[Theorem 3.3]{panda}.

Let $C_n$ be the finite cyclic group of order $n$. The number of generators of $C_n$ is $\phi(n)$, where $\phi$ is the Euler's totient function. Recall that $\phi$ is a multiplicative function, that is, $\phi(ab)=\phi(a)\phi(b)$ for any two positive integers $a,b$ which are relatively prime. Also, $\phi(p^k)=p^{k-1}(p-1)$ for any prime $p$ and positive integer $k$. The identity element and the generators of $C_n$ are adjacent to all other vertices of $\mathcal{P}(C_n)$. So any cut-set of $\mathcal{P}(C_n)$ must contain these elements, giving
$\kappa(\mathcal{P}(C_n))\geq \phi(n) +1$. Further, equality holds if and only if $n$ is a prime or a product of two distinct primes, see \cite[Lemma 2.5]{cps}.

For the rest of the paper, we take $r\geq 1$, $n_1,n_2,\ldots, n_r$ are positive integers and $p_1,p_2,\ldots,p_r$ are prime numbers with $p_1<p_2<\cdots <p_r$. From Theorems 1.3, 1.5 and Corollary 1.4 of \cite{cps}, we have the following.

\begin{theorem}\cite{cps}\label{cyclic-connectivity}
Suppose that $r\geq 2$. Then the following hold:
\begin{enumerate}
\item[(i)]  If $n=p_1^{n_1}p_2^{n_2}\ldots p_r^{n_r}$ and $2\phi(p_1p_2\ldots p_{r-1}) > p_1p_2\ldots p_{r-1}$, then
$$\kappa(\mathcal{P}(C_n))=\phi(n) + p_1^{n_1 -1}\ldots p_{r-1}^{n_{r-1} -1} p_r^{n_r -1} \left[p_1p_2\ldots p_{r-1} - \phi(p_1p_2\ldots p_{r-1})\right].$$
\item[(ii)] If $n=p_1^{n_1} p_2^{n_2}$, then $\kappa(\mathcal{P}(C_n))=\phi \left(p_{1}^{n_{1}}p_{2}^{n_2}\right)+p_{1}^{n_{1}-1}p_{2}^{n_2-1}$.
\item[(iii)] If $n=p_1^{n_1} p_2^{n_2}p_3^{n_3}$, then the following hold.
\begin{enumerate}
\item[(a)] $p_1=2:$ $\kappa(\mathcal{P}(C_n))=\phi(n)+2^{n_{1}-1}p_{2}^{n_{2}-1}\left[(p_2 -1)p_{3}^{n_{3}-1}+2\right]$.
\item[(b)] $p_1\geq 3:$ $\kappa(\mathcal{P}(C_n))= \phi(n) + p_1^{n_1 -1} p_{2}^{n_{2} -1} p_3^{n_3 -1} \left[p_1+p_2 - 1\right]$.
\end{enumerate}
\end{enumerate}
In each of the above cases, there is only one minimum cut-set of $\mathcal{P}(C_n)$ except when $(r,p_1)=(2,2)$. If $(r,p_1)=(2,2)$, then there are $n_2$ minimum cut-sets of $\mathcal{P}(C_n)$.
\end{theorem}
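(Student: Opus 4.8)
My plan is to recast the statement as a weighted minimum-cut problem on the divisor lattice of $n$ and then carry out an extremal analysis, the hard part being optimality (the lower bound).

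\textbf{Reduction.} Since $C_n$ is cyclic, for every divisor $d\mid n$ there is a unique subgroup of order $d$, whose generators are exactly the $\phi(d)$ elements of order $d$. Hence two elements of orders $d,d'$ are adjacent in $\mathcal{P}(C_n)$ iff $d\mid d'$ or $d'\mid d$. Thus $\mathcal{P}(C_n)$ is the blow-up of the comparability graph of the poset $(\{d:d\mid n\},\mid)$, each divisor $d$ being replaced by a clique of $\phi(d)$ vertices and two cliques being completely joined exactly when the divisors are comparable. All elements of a fixed order share the same outside neighbourhood, so in any separation no order-class is split: each lies entirely on one side or entirely in the cut-set. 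The identity $(d=1)$ and the generators $(d=n)$ are universal vertices, hence belong to every cut-set, which already gives $\kappa(\mathcal{P}(C_n))\ge \phi(n)+1$ as noted above. Consequently
\[
\kappa(\mathcal{P}(C_n))=\phi(n)+1+\min_{T}\sum_{d\in T}\phi(d),
\]
the minimum being over all sets $T$ of proper nontrivial divisors whose deletion disconnects the comparability graph.

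\textbf{Upper bound via structured cuts.} For a prime $p_j\mid n$ write $v_{p_i}(d)$ for the exponent of $p_i$ in $d$, and set $S_j=\{d: v_{p_i}(d)=n_i\ (i\ne j),\ v_{p_j}(d)<n_j\}$ and $B_j=\{d:v_{p_j}(d)=n_j,\ d\ne n\}$. A valuation check shows every order in $S_j$ is incomparable with every order in $B_j$, so deleting the generators together with all proper divisors outside $S_j\cup B_j$ is a cut-set isolating $S_j$ from $B_j$. Using $\sum_{d\mid m}\phi(d)=m$ and multiplicativity of $\phi$, the weight of this cut evaluates to $\tfrac{n}{p_1\cdots p_r}\big[\widehat m_j-\phi(\widehat m_j)\big]$ with $\widehat m_j=\prod_{i\ne j}p_i$; since $p_1<\cdots<p_r$ this is smallest for $j=r$, giving $p_1^{n_1-1}\cdots p_r^{n_r-1}\big[p_1\cdots p_{r-1}-\phi(p_1\cdots p_{r-1})\big]$, which is exactly the bracket in (i). Specialising $r=2,3$ reproduces (ii) and (iii)(b). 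When $p_1=2$ the classes $\phi(2^k)=2^{k-1}$ are so cheap that the configuration can be split more economically than by isolating $S_r$; producing that split and summing its weight should give the bracket in (iii)(a). In every case I obtain a cut-set of the asserted size, i.e. the $\le$ direction.

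\textbf{Lower bound (the crux).} I must show no separation is cheaper. Given an arbitrary separation of the comparability graph I would inspect the $\mid$-minimal orders occurring on each side: incomparability forces them to be spread across the primes, and I would bound $\sum_{d\in T}\phi(d)$ from below by the boundary of the smaller side, reducing to a comparison among the finitely many structured cuts above. The numerical hypothesis is precisely the threshold deciding the winner: $2\phi(p_1\cdots p_{r-1})>p_1\cdots p_{r-1}$ is equivalent to $2\prod_{i<r}(1-p_i^{-1})>1$, and it is exactly the regime in which isolating $S_r$ beats the competing (cheap) $2$-part split. For $r=3$ it reduces to $(p_1-2)(p_2-2)>2$, i.e.\ $p_1\ge3$, explaining the dichotomy in (iii): for $p_1\ge3$ the $S_r$-cut is optimal (iii)(b), while for $p_1=2$ the inequality fails and the competing split strictly wins (iii)(a). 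I expect the main obstacle to be proving that these structured cuts are genuinely extremal among \emph{all} partitions of the divisor lattice---ruling out every irregular separation---which should require an exchange/induction argument on the factorisation of $n$ together with elementary inequalities for $\phi$. Finally, uniqueness is read off from the equality cases in these estimates: the minimiser is unique except when the competing cut ties the $S_r$-cut, which happens precisely when $(r,p_1)=(2,2)$; there the same minimum weight is attained by $n_2$ distinct cut-sets, giving the stated count.
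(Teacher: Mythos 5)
First, a contextual point: the paper does not prove this statement at all --- it is imported verbatim from \cite{cps} (Theorems 1.3, 1.5 and Corollary 1.4 there), so there is no internal proof to compare routes with. Judged on its own terms, your proposal sets up a sensible framework (the reduction to a $\phi$-weighted cut problem on the divisor lattice, and the structured cuts $S_j$ versus $B_j$, whose size $\tfrac{n}{p_1\cdots p_r}\bigl[\widehat m_j-\phi(\widehat m_j)\bigr]$ you compute correctly), but it is a plan rather than a proof, with two genuine gaps. (1) Even the upper bound is incomplete: in case (iii)(a) ($p_1=2$) you concede that isolating $S_r$ is not optimal and that some ``more economical'' split ``should give the bracket'', but you never construct it. It does exist: delete every element whose order is a proper divisor or a proper multiple of $2^{n_1}p_2^{n_2}$ --- this is exactly the set this paper calls $\Gamma(M)$ in Section \ref{sec-main-3} --- and its size is $\phi(n)+2^{n_1-1}p_2^{n_2-1}\left[(p_2-1)p_3^{n_3-1}+2\right]$, while your $S_r$-cut exceeds it by $2^{n_1}p_2^{n_2-1}\left(p_3^{n_3-1}-1\right)$, i.e.\ is strictly larger whenever $n_3\geq 2$. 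Without this construction, case (iii)(a) has no upper bound at all. (2) The lower bound, which you yourself call the crux, is not argued: ``inspect the $\mid$-minimal orders on each side and reduce to a comparison among the structured cuts'' is a hope, not an argument, and nothing in the proposal rules out irregular separations --- which is precisely where all the work in \cite{cps} lies. Consequently the uniqueness assertion, and the count of $n_2$ minimum cut-sets when $(r,p_1)=(2,2)$, are also unproved; they cannot be ``read off from the equality cases'' of estimates that were never established, and you never exhibit the $n_2$ cut-sets.

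Two smaller corrections. Your reduction step asserts that ``in any separation no order-class is split: each lies entirely on one side or entirely in the cut-set''; this is false as stated, since a class can meet both the cut-set and one side. What the reduction actually needs is that \emph{minimal} cut-sets are unions of order-classes (Lemma \ref{equi-class} here, Lemma 2.3 of \cite{cps}), which suffices because minimum cut-sets are minimal. Also, (ii) is not a specialisation of (i): for $r=2$, $p_1=2$ the hypothesis $2\phi(p_1)>p_1$ fails, yet the formula of (ii) still holds (with $n_2$ minimisers instead of a unique one), so this case requires its own lower-bound argument --- another place where your threshold heuristic, which correctly flags that the $p_1=2$ regime behaves differently, does not by itself decide the answer.
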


We note that Theorem \ref{cyclic-connectivity}(ii) was also proved in \cite[Theorem 2.38]{panda} and Theorem \ref{cyclic-connectivity}(iii) for $n_1=n_2=n_3=1$ was also obtained in \cite[Theorem 2.40]{panda}. Recently, the authors obtained in \cite{cps-1} the value of $\kappa(\mathcal{P}(C_n))$ for the cases: (i) $n_r\geq 2$ and (ii) $n$ is a product of distinct primes.

\subsection{Main results}

In Section \ref{sec-noncyclic}, we shall study (minimal) cut-sets of the power graph of a (finite) non-cyclic (nilpotent/abelian) group which are associated with its maximal cyclic subgroups. We then prove the following results in Sections \ref{sec-main-1}, \ref{sec-main-2}, \ref{sec-main-3} respectively.

\begin{theorem}\label{main-1}
Let $G$ be a finite non-cyclic nilpotent group of order $p_1^{n_1}p_2^{n_2}\ldots p_r^{n_r}$, $r\geq 2$. For $1\leq i \leq r,$ let $P_i$ be the Sylow $p_i$-subgroup of $G$. Suppose that each Sylow subgroup is cyclic except $P_{k}$ for some $k\in\{1,2,\ldots, r\}$ and that $P_k$ is not a generalized quaternion $2$-group if $(k,p_1)=(1,2)$. Set $Q=P_{1}\cdots P_{k-1}P_{k+1}\cdots P_{r}$. If $p_k\geq r+1$ or if $2\phi(p_1\ldots p_{r-1}) > p_1\ldots p_{r-1}$, then $Q$ is the only minimum cut-set of $\mathcal{P}(G)$ and hence
$\kappa(\mathcal{P}(G))=n/p_{k}^{n_{k}}$.
\end{theorem}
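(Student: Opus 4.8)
The plan is to exploit the nilpotency of $G$, which yields the internal direct product $G=P_1\times\cdots\times P_r$. Writing elements as tuples $(g_1,\ldots,g_r)$ with $g_i\in P_i$, the subgroup $Q$ is exactly the set of elements whose $P_k$-coordinate is trivial, so $Q$ is cyclic of order $n/p_k^{n_k}$, and every maximal cyclic subgroup of $G$ has the shape $M=Q\langle z\rangle$ for a generator $z$ of a maximal cyclic subgroup of $P_k$. In particular $Q\subseteq M$ for every maximal cyclic subgroup $M$, and two distinct such subgroups meet in $M\cap M'=Q\langle z,z'\rangle\cap\cdots\supseteq Q$.

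First I would show that $Q$ is a cut-set, giving $\kappa(\mathcal{P}(G))\le|Q|$. Consider the projection $\pi_k\colon G\to P_k$. If two vertices of $G\setminus Q$ are adjacent in $\mathcal{P}(G)$, then one lies in the cyclic group generated by the other, so their (nontrivial) $P_k$-coordinates are equal or adjacent in $\mathcal{P}^{\ast}(P_k)$. Hence $\pi_k$ maps each connected component of $\mathcal{P}(G\setminus Q)$ into a single component of $\mathcal{P}^{\ast}(P_k)$. Since $P_k$ is neither cyclic nor a generalized quaternion $2$-group, $\mathcal{P}^{\ast}(P_k)$ is disconnected by the result on $p$-groups recalled in the introduction, and as $\pi_k$ restricted to $G\setminus Q$ surjects onto $P_k\setminus\{1\}$, the graph $\mathcal{P}(G\setminus Q)$ is disconnected.

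The heart of the matter is the matching lower bound together with uniqueness. The key observation is that for a generator $g$ of a maximal cyclic subgroup $M$ the closed neighbourhood satisfies $N[g]=M$: if $g\in\langle h\rangle$ then maximality forces $\langle h\rangle=M$. Now fix any cut-set $X$ with separation $(A,B)$. If some maximal cyclic subgroup has a generator in $A$ and some other has a generator in $B$, then $N[g]=M$ gives $M\subseteq A\cup X$ and $M'\subseteq B\cup X$, whence $Q\subseteq M\cap M'\subseteq X$; this yields $|X|\ge|M\cap M'|\ge|Q|$, and when $|X|=|Q|$ it forces $Q=M\cap M'=X$, proving uniqueness. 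Otherwise, after possibly swapping $A$ and $B$, no maximal cyclic subgroup has a generator in $B$; then every maximal cyclic subgroup meeting $B$ has all of its generators in $X$. Taking one such $M=Q\langle z\rangle$ (which exists since $B\neq\emptyset$), its $\phi(|Q|)\phi(|\langle z\rangle|)\ge\phi(|Q|)(p_k-1)$ generators lie in $X$, and none is the identity, so $|X|\ge 1+\phi(|Q|)(p_k-1)$.

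The main obstacle is the closing arithmetic: showing $1+\phi(|Q|)(p_k-1)>|Q|$ under either hypothesis, so that this second case never produces a minimum cut-set and every minimum cut-set therefore falls in the first case and equals $Q$. Since $|Q|/\phi(|Q|)=\prod_{i\ne k}p_i/(p_i-1)$, it suffices to verify $p_k-1\ge\prod_{i\ne k}p_i/(p_i-1)$. Under $2\phi(p_1\cdots p_{r-1})>p_1\cdots p_{r-1}$ one gets $\prod_{i\ne k}p_i/(p_i-1)<2$, while this same inequality forces all primes to be odd, so $p_k-1\ge 2$ closes the case. Under $p_k\ge r+1$, I would bound $\prod_{i\ne k}p_i/(p_i-1)$ by the product over the $r-1$ smallest primes and check by induction that this product is at most $r\le p_k-1$, the inductive step reducing to the elementary fact that the $m$-th prime is at least $m+1$. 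Combining the two cases gives $\kappa(\mathcal{P}(G))=|Q|=n/p_k^{n_k}$ with $Q$ the unique minimum cut-set.
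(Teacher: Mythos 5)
Your proof is correct, and it takes a genuinely different route from the paper's. The paper first shows $Q$ is a minimal cut-set by an explicit path argument resting on Lemma \ref{p-group} (Proposition \ref{nilpotent-mincut}); then, given a minimal cut-set $X\neq Q$, it applies Proposition \ref{existence-0} with $D=Q$ to produce $M\in\mathcal{M}(G)$ with $\mathcal{P}(M\setminus X)$ disconnected, so that $|X|\geq\kappa(\mathcal{P}(M))$, and finishes with arithmetic: the elementary bound $\kappa(\mathcal{P}(M))\geq 1+\phi(|M|)$ plus Lemma \ref{inequality1} when $p_k\geq r+1$, but the exact formula of Theorem \ref{cyclic-connectivity}(i) --- an external result on cyclic groups --- when $2\phi(p_1\cdots p_{r-1})>p_1\cdots p_{r-1}$. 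You instead fix an arbitrary cut-set $X$ with a separation $(A,B)$ and split on where the generators of maximal cyclic subgroups land. When they occur on both sides, your observation $N[g]=M$ yields $M\subseteq A\cup X$ and $M'\subseteq B\cup X$, hence $Q\subseteq M\cap M'\subseteq X$; this gives $|X|\geq |Q|$ and uniqueness in one stroke, by a containment rather than the paper's size comparison among minimal cut-sets. When generators avoid $B$ (after swapping), all $\phi(|M|)\geq\phi(|Q|)(p_k-1)$ generators of any $M$ meeting $B$ lie in $X$, and your inequality $p_k-1\geq\prod_{i\neq k}p_i/(p_i-1)$ --- whose induction is precisely the content of the paper's Lemma \ref{inequality1} --- closes both hypothesis cases; in particular, under $2\phi(p_1\cdots p_{r-1})>p_1\cdots p_{r-1}$ all $p_i$ are odd, so the product is below $2\leq p_k-1$. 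The net effect is a more self-contained argument: you never need Theorem \ref{cyclic-connectivity} (the paper needs it exactly in its second case), only the generator count; and your upper bound (that $Q$ is a cut-set) is delegated via the projection $\pi_k$ to the cited disconnectedness of $\mathcal{P}^{\ast}(P_k)$, which the paper in effect re-derives by hand. The trade-off is that the paper's intermediate propositions (\ref{gen-connected}, \ref{existence-0}, \ref{nilpotent-mincut}) are reused in its proofs of Theorems \ref{main-2} and \ref{main-3}, whereas your dichotomy exploits the special feature, valid only here, that $Q$ lies in every maximal cyclic subgroup. When writing this up, make explicit two small steps you left implicit: every cut-set contains the identity (this justifies the ``$1+{}$'' in your count), and in the second case a generator of $M$ lying in $A$ would contradict the separation because it is adjacent to the chosen element of $B\cap M$.
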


For a cyclic group $H$, we denote by $\widetilde{H}$ the set of all non-generators of $H$.

\begin{theorem}\label{main-2}
Let $G$ be a finite non-cyclic abelian group of order $p_1^{n_1}p_2^{n_2}$ and $P_i$ be the Sylow $p_i$-subgroup of $G$, $i=1,2$. Then the following hold.
\begin{enumerate}
\item[(i)] Suppose that either $P_1$ or $P_2$ is non-cyclic. If $P_i$ is non-cyclic, then $P_j$ is a minimum cut-set of $\mathcal{P}(G)$ and so $\kappa(\mathcal{P}(G))=|P_j|=p_j^{n_j}$, where $\{i,j\}=\{1,2\}$. In fact, if $p_1\geq 3$, or if $p_1=2$ and $P_2$ is non-cyclic, then there is only one minimum cut-set of $\mathcal{P}(G)$.
\item[(ii)] Suppose that both $P_1$ and $P_2$ are non-cyclic. If $p_1\geq 3$ and $G$ has maximal cyclic subgroup $C$ of order $p_1p_2$, then $\kappa(\mathcal{P}(G))=\min \left\{|P_1|,|P_2|,|\widetilde{C}|\right\}$.
\item[(iii)] Suppose that both $P_1$ and $P_2$ are non-cyclic and that $P_1$ is elementary abelian. Then $\kappa(\mathcal{P}(G))=\min \left\{|P_1|,|P_2|,|\widetilde{C}|\right\}$, where $C$ is a maximal cyclic subgroup of $G$ of minimum possible order.
\end{enumerate}
\end{theorem}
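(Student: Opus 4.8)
Throughout I would write $G=P_1\times P_2$, the internal direct product of the two Sylow subgroups, and identify $g\in G$ with a pair $(g_1,g_2)$, $g_i\in P_i$. Everything rests on one elementary fact: since $|P_1|$ and $|P_2|$ are coprime, the Chinese Remainder Theorem gives $(g_1,g_2)\in\langle(h_1,h_2)\rangle$ if and only if $g_1\in\langle h_1\rangle$ and $g_2\in\langle h_2\rangle$. Hence two distinct vertices of $\mathcal{P}(G)$ are adjacent precisely when their first coordinates are comparable in $P_1$ and their second coordinates are comparable in $P_2$ (with respect to ``one is a power of the other,'' read in the same direction). First I would record three families of cut-sets together with their sizes. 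For every maximal cyclic subgroup $M=M_1\times M_2$ of $G$, the set $\widetilde{M}$ of non-generators of $M$ is a cut-set of size $|M|-\phi(|M|)=|M_1||M_2|-\phi(|M_1|)\phi(|M_2|)$: by maximality any generator of $M$ is adjacent only to elements of $M$, so after deleting $\widetilde{M}$ the generators of $M$ split off as a complete component. Next, whenever $P_i$ is non-cyclic the subgroup $P_j$ ($\{i,j\}=\{1,2\}$) is a cut-set, because projecting onto $P_i$ and using the coprimality fact shows that the components of $\mathcal{P}(G)\setminus P_j$ are in bijection with those of $\mathcal{P}^{\ast}(P_i)$, which is disconnected since a finite non-cyclic abelian $p$-group has disconnected proper power graph (recalled in the introduction). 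This already yields the upper bound $\kappa(\mathcal{P}(G))\le\min\{|P_1|,|P_2|,|\widetilde{M}|\}$; a short monotonicity check that $|M|-\phi(|M|)$ is increasing in $|M_1|$ and $|M_2|$ shows the minimum over maximal cyclic $M$ is attained at one of smallest order, which is how the distinguished $C$ enters in (ii) and (iii).

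For part (i) I would treat the case in which exactly one Sylow subgroup is non-cyclic (both non-cyclic being the subject of (ii) and (iii)); say $P_i$ is non-cyclic and $P_j$ cyclic. Then $P_j$ is a cyclic $p$-group, so $\mathcal{P}(P_j)$ is complete, and the key structural observation is that each fiber $\{x\}\times P_j$ is a clique of size $p_j^{n_j}$. I would match the upper bound $\kappa\le|P_j|$ with the lower bound $\kappa\ge p_j^{n_j}$ by showing that deleting any $X$ with $|X|<p_j^{n_j}$ leaves $\mathcal{P}(G)$ connected: since $|X|<|P_j|$ some vertex of $P_j$ survives, and being adjacent to $(x,g)$ for every generator $g$ of $P_j$ and every $x$, this surviving vertex reaches into each $\mathcal{P}^{\ast}(P_i)$-component, while the fiber cliques keep the remainder of each component internally connected, so no separation can persist. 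Together these give $\kappa(\mathcal{P}(G))=p_j^{n_j}$ with $P_j$ a minimum cut-set.

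Parts (ii) and (iii) are where I expect the main difficulty. Now both $P_i$ are non-cyclic, neither family of fibers is a clique, and the simple hub argument of part (i) breaks down. I would establish the lower bound by proving the structural statement that every minimal cut-set $X$ contains one of $P_1$, $P_2$, or some $\widetilde{M}$; combined with the monotonicity above this gives $\kappa\ge\min\{|P_1|,|P_2|,|\widetilde{C}|\}$ and hence equality. The argument would fix a separation $(A,B)$ and a non-adjacent pair $a=(a_1,a_2)\in A$, $b=(b_1,b_2)\in B$, and analyze the comparabilities of $a_1,b_1$ in $P_1$ and of $a_2,b_2$ in $P_2$: for example, when $a_1,b_1$ and $a_2,b_2$ are each comparable, the mixed vertices $(a_1,b_2)$ and $(b_1,a_2)$ are bridges and must lie in $X$, and running this over the generators of the maximal cyclic subgroups through $a$ and through $b$ forces an entire $\widetilde{M}$, or an entire $P_i$, into $X$. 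The hypotheses are exactly what make the bookkeeping terminate at the claimed value: in (ii) the assumption $p_1\ge 3$ together with the existence of a maximal cyclic subgroup of order $p_1p_2$ pins the extremal $M$ to order $p_1p_2$, giving $|\widetilde{M}|=p_1+p_2-1$; in (iii) the hypothesis that $P_1$ is elementary abelian forces every maximal cyclic subgroup to have $p_1$-part of order $p_1$, so the least $|\widetilde{M}|$ is realized by the maximal cyclic subgroup $C$ of least order. To keep the case analysis organized I would invoke the general results on cut-sets associated with maximal cyclic subgroups developed in Section \ref{sec-noncyclic}.

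Finally I would settle the counting of minimum cut-sets. When the minimum is attained at a Sylow subgroup $P_j$ with $p_1\ge 3$ (and in the corresponding alternatives of (ii) and (iii)), I would show $P_j$ is the unique minimum cut-set by arguing that any cut-set of size $p_j^{n_j}$ which is not $P_j$ leaves a surviving hub and hence cannot disconnect the graph. The genuine exception is $p_1=2$ with $P_1$ the non-cyclic $2$-group and $P_2$ cyclic: here, exactly as in Theorem \ref{cyclic-connectivity} for $(r,p_1)=(2,2)$, the even prime produces several minimum cut-sets obtained by exchanging the $p_2$-levels, which I would enumerate by the same mechanism. This reproduces the dichotomy recorded in (i).
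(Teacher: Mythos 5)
Your upper-bound constructions are sound: the coprime-coordinate description of adjacency is correct, $\widetilde{M}$ is a cut-set by the maximality argument, identifying the components of $\mathcal{P}(G)\setminus P_j$ with those of $\mathcal{P}^{\ast}(P_i)$ is a valid (and mildly different) route to Proposition \ref{nilpotent-mincut}, and your monotonicity check is Proposition \ref{size-compare}. The genuine gaps are all in the lower bounds, which are the hard half of the theorem. In part (i) the hub argument fails as stated: the surviving vertex of $P_j$ is declared adjacent only to elements $xg$ with $g$ a generator of $P_j$, and a set of size smaller than $|P_j|$ can delete all of these over a given $x$. Concretely, with $p_1=2$, $P_1$ non-cyclic, $P_2$ cyclic and $n_2\geq 2$, take $X$ to be the identity together with $\{xg: g \text{ a generator of } P_2\}$ for a fixed involution $x$; then $|X|=1+\phi(p_2^{n_2})<p_2^{n_2}$, yet every edge your argument exhibits from the hub into the fiber over $x$ is deleted, and the fiber cliques do not rescue you because edges between distinct fibers are directional (if $x'\in\langle x\rangle$ but $x\notin\langle x'\rangle$, then $xg_2\sim x'g_2'$ forces $g_2'\in\langle g_2\rangle$). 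The graph does stay connected, but through routes (such as $xg_2\sim g_2$ for non-generators $g_2\neq 1$) that your mechanism neither provides nor controls. Moreover the bound is tight: cut-sets of size exactly $|P_2|$ other than $P_2$ exist (Example \ref{example}), and a maximal cyclic subgroup $M$ of order $2p_2^{n_2}$ has $\kappa(\mathcal{P}(M))=p_2^{n_2}$ exactly, so no slack argument can succeed. This is why the paper's proof (Proposition \ref{P-1-cyclic}, and Theorem \ref{main-1} when $p_1\geq 3$) runs through Lemma \ref{equi-class}, Proposition \ref{existence-0} and the cyclic-group connectivity formula of Theorem \ref{cyclic-connectivity}(ii); your sketch never brings in that input, and your uniqueness claims rest on the same unsupported hub mechanism.

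In parts (ii) and (iii) the structural statement you propose to prove---every minimal cut-set contains $P_1$, $P_2$, or a full $\widetilde{M}$---is stronger than what your bridge analysis can deliver, and it is not what the paper establishes; the paper instead derives size estimates. Its dichotomy is: either $\mathcal{P}(M\setminus X)$ is disconnected for some $M\in\mathcal{M}(G)$, handled by Lemma \ref{max-min-cut} (which requires $p_1\geq 3$) together with Proposition \ref{size-compare}; or every $\mathcal{P}(M\setminus X)$ is connected, whence $X$ contains no generators of maximal cyclic subgroups (Proposition \ref{gen-connected}) and a separation argument (Propositions \ref{contain}, Corollary \ref{coro-non-adj}) forces $X\supseteq(M\cap P_1)\cup(M\cap P_2)$ for a suitable $M$, giving $|X|\geq p_1+p_2-1=|\widetilde{C}|$ in (ii)---with no claim that all of $\widetilde{M}$ lies in $X$. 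More seriously, in (iii) the prime $p_1$ may equal $2$, where Lemma \ref{max-min-cut} is false; the crux there is the paper's Proposition \ref{2-gen-discon}, a delicate argument (order estimates on elements of $X$, explicit walks avoiding $X$) showing a minimum cut-set cannot contain the generators of any maximal cyclic subgroup. Your sketch uses the elementary abelian hypothesis only to identify the minimizer $C$ and does not engage with this $p_1=2$ obstruction at all, so the ``bookkeeping'' you defer is precisely the missing proof.
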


We note that if $p_1=2$, $P_1$ is non-cyclic and $P_2$ is cyclic in Theorem \ref{main-2}(i), then there might be more than one minimum cut-set of $\mathcal{P}(G)$, see Example \ref{example}.

\begin{theorem}\label{main-3}
Let $G$ be a finite non-cyclic abelian group of order $p_1^{n_1}p_2^{n_2}p_3^{n_3}$ and $P_i$ be the Sylow $p_i$-subgroup of $G$ for $i\in\{1,2,3\}$. Suppose that exactly one Sylow subgroup of $G$ is non-cyclic. Then the following hold.
\begin{enumerate}
\item[(i)] If $p_1=2$ and $P_1$ is non-cyclic, then $\kappa(\mathcal{P}(G))=\min \left\{|P_2P_{3}|,\kappa(\mathcal{P}(C))\right\}$, where $C$ is a maximal cyclic subgroup of $G$ of minimum possible order. More precisely, if $|C|=2^{c}p_2^{n_2}p_3^{n_3}$ for some positive integer $c$, then
\begin{equation*}
 \kappa(\mathcal{P}(G))=
\begin{cases}
|P_2P_3|, &\text{if $c>1$,}\\
\kappa(\mathcal{P}(C)), &\text{if $c=1$.}
\end{cases}
\end{equation*}
\item[(ii)] If  $p_1=2$ and $P_k$ is non-cyclic, then $P_1P_j$ is the only minimum cut-set of $\mathcal{P}(G)$ and so $\kappa(\mathcal{P}(G))=|P_1P_j|=p_1^{n_1}p_j^{n_j}$, where $\{j,k\}=\{2, 3\}$.
\item[(iii)] If $p_1\geq 3$ and $P_k$ is non-cyclic, then $P_iP_j$ is the only minimum cut-set of $\mathcal{P}(G)$ and so $\kappa(\mathcal{P}(G))=|P_iP_j|=p_i^{n_i}p_j^{n_j}$, where $\{i,j,k\}=\{1,2, 3\}$.
\end{enumerate}
\end{theorem}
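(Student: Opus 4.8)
The plan is to exploit the decomposition $G=P_1\times P_2\times P_3$ together with the cut-set analysis of Section~\ref{sec-noncyclic}, which (for a non-cyclic nilpotent $G$ with a unique non-cyclic Sylow subgroup $P_k$) reduces the computation of $\kappa(\mathcal{P}(G))$ to comparing two families of cut-sets. The first is the single subgroup $Q=\prod_{i\ne k}P_i$: removing it leaves the elements with non-trivial $P_k$-component, and two such elements lie in the same component of $\mathcal{P}(G)\setminus Q$ only if their $P_k$-components share the same subgroup of order $p_k$; since a non-cyclic $p_k$-group has more than one subgroup of order $p_k$, $Q$ is a cut-set of size $|Q|=n/p_k^{n_k}$. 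The second family is localized at the maximal cyclic subgroups: every maximal cyclic subgroup of $G$ has the form $C=C_k\times\prod_{i\ne k}P_i$ with $C_k$ a maximal cyclic subgroup of $P_k$, its generators are adjacent only inside $C$, and a minimum cut-set of $\mathcal{P}(C)$ separates the ``interior'' vertices of $C$ (those whose $P_k$-part generates $C_k$) from the boundary; this produces a cut-set of $\mathcal{P}(G)$ of size $\kappa(\mathcal{P}(C))$. Thus $\kappa(\mathcal{P}(G))=\min\{|Q|,\min_C\kappa(\mathcal{P}(C))\}$, and since $\kappa(\mathcal{P}(C))$ depends only on $|C|$ and is increasing in $|C_k|$ (by Theorem~\ref{cyclic-connectivity}), the inner minimum is attained at a maximal cyclic subgroup of minimum order.

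For parts (ii) and (iii) I would show that the Sylow-product $Q$ strictly beats every localized cut-set. In (iii) we have $p_1\ge 3$, hence $(p_1-2)(p_2-2)\ge 3>2$, i.e. $2\phi(p_1p_2)>p_1p_2$; Theorem~\ref{main-1} (with $r=3$) then applies verbatim and gives that $Q=P_iP_j$ is the only minimum cut-set with $\kappa(\mathcal{P}(G))=p_i^{n_i}p_j^{n_j}$. In (ii) with $p_1=2$ and $k=3$ we have $p_k=p_3\ge 5\ge r+1$, so Theorem~\ref{main-1} again applies and yields $Q=P_1P_2$. The only case not covered by Theorem~\ref{main-1} is (ii) with $k=2$ (in particular $p_2=3$); here, writing $|C|=2^{n_1}p_2^{c}p_3^{n_3}$ and using Theorem~\ref{cyclic-connectivity}(iii)(a), one computes $\kappa(\mathcal{P}(C))=2^{n_1-1}p_2^{c-1}[(p_2-1)p_3^{n_3}+2]$, so that $\kappa(\mathcal{P}(C))-|P_1P_3|=2^{n_1-1}[\,p_2^{c-1}((p_2-1)p_3^{n_3}+2)-2p_3^{n_3}\,]>0$ for all $c\ge 1$ because $p_2\ge 3$. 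Hence $|P_1P_3|$ is strictly smaller than every localized cut-set, so $Q=P_1P_3$ is the unique minimum cut-set and $\kappa(\mathcal{P}(G))=2^{n_1}p_3^{n_3}$.

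Part (i) is the delicate one, because here the non-cyclic Sylow subgroup is $P_1$ with $(k,p_1)=(1,2)$, so the numerical hypotheses of Theorem~\ref{main-1} fail and a localized cut-set can actually win. Let $C$ be a maximal cyclic subgroup of minimum order, say $|C|=2^{c}p_2^{n_2}p_3^{n_3}$. By Theorem~\ref{cyclic-connectivity}(iii)(a) one obtains $\kappa(\mathcal{P}(C))=2^{c-1}p_2^{n_2-1}[(p_2-1)p_3^{n_3}+2]$, and therefore
\[
\kappa(\mathcal{P}(C))-|P_2P_3|=p_2^{n_2-1}\big[\,2^{c-1}\big((p_2-1)p_3^{n_3}+2\big)-p_2p_3^{n_3}\,\big].
\]
For $c=1$ the bracket equals $2-p_3^{n_3}<0$ (since $p_3\ge 5$), so $\kappa(\mathcal{P}(C))<|P_2P_3|$ and $\kappa(\mathcal{P}(G))=\kappa(\mathcal{P}(C))$; for $c\ge 2$ the bracket is at least $(p_2-2)p_3^{n_3}+4>0$, so $\kappa(\mathcal{P}(C))>|P_2P_3|$ and, by the monotonicity noted above, every localized cut-set exceeds $|P_2P_3|$, giving $\kappa(\mathcal{P}(G))=|P_2P_3|$. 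This is exactly the stated dichotomy between $c=1$ and $c>1$.

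The main obstacle is the reduction described in the first paragraph, namely the Section~\ref{sec-noncyclic} fact that the only minimal cut-sets to consider are $Q$ and the ones localized at maximal cyclic subgroups, together with the identification of the minimum localized size as $\kappa(\mathcal{P}(C))$ (which requires pinning down the interior of $C$ and checking that a minimum cut-set of $\mathcal{P}(C)$ genuinely separates it from the boundary, rather than merely giving the bound $\kappa(\mathcal{P}(C))$). Once this is in place, Theorem~\ref{main-3} is just the three size comparisons above: the uniqueness statements in (ii) and (iii) follow because the unique Sylow-product $Q$ is then the strictly smallest cut-set, while part (i) records the genuine competition between $|P_2P_3|$ and $\kappa(\mathcal{P}(C))$. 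The only additional care required is the monotonicity of $\kappa(\mathcal{P}(C))$ in $|C|$, which guarantees that the minimum over maximal cyclic subgroups is realised at one of minimum order.
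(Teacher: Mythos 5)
Your numerical work is all correct, and parts (ii) and (iii) are essentially sound: the closed form $\kappa(\mathcal{P}(C))=2^{c-1}p_2^{n_2-1}\left[(p_2-1)p_3^{n_3}+2\right]$ agrees with Theorem \ref{cyclic-connectivity}(iii)(a) after simplification, and in those parts only the lower-bound half of your reduction is needed: Proposition \ref{nilpotent-mincut} makes $Q$ a cut-set, and Proposition \ref{existence-0} forces any cut-set not containing $Q$ to meet some $M\in\mathcal{M}(G)$ in at least $\kappa(\mathcal{P}(M))$ vertices, which your computations show strictly exceeds $|Q|$; this gives both the value and the uniqueness, and is the paper's own route.

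The genuine gap is in part (i), precisely in the case $c=1$, where the claimed value is $\kappa(\mathcal{P}(C))<|P_2P_3|$. There you must prove the upper bound $\kappa(\mathcal{P}(G))\leq\kappa(\mathcal{P}(C))$, i.e., exhibit a set of $\kappa(\mathcal{P}(C))$ vertices whose removal disconnects all of $\mathcal{P}(G)$, not merely $\mathcal{P}(C)$; a minimum cut-set of $\mathcal{P}(C)$ could a priori fail to do this if both sides of its separation have neighbours in $G\setminus C$. You explicitly name this as ``the main obstacle'' and attribute it to Section \ref{sec-noncyclic}, but nothing in that section supplies it; the paper proves it as a standalone lemma (Lemma \ref{max-connectivity}), by writing down the unique minimum cut-set $\Gamma(M)$ of $\mathcal{P}(M)$ explicitly and verifying that the side $A=E\left(M,2^{m}p_2^{n_2}\right)$ --- elements with full $2$-part, full $p_2$-part and \emph{trivial} $p_3$-part --- has no neighbours outside $M$, because any $N\in\mathcal{M}(G)$ containing such an element satisfies $N\cap P_1=M\cap P_1$ and hence $N=M$ (as $M\cap N\supseteq P_2P_3$). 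Moreover, your sketch of this step is incorrect as stated: the set of elements ``whose $P_1$-part generates $C\cap P_1$'' is not a union of components of $\mathcal{P}(C\setminus\Gamma(C))$. Indeed, an element $x$ of order $2^{c}p_2^{j}p_3^{t}$ with $j<n_2$ and $t\geq 1$ has full $P_1$-part, lies outside $\Gamma(C)$, and is adjacent to $x^{2^c}$, which has trivial $P_1$-part and also lies outside $\Gamma(C)$; so your ``interior'' is not separated from the ``boundary'' at all. Without this lemma, the $c=1$ case of (i) --- and hence the reduction $\kappa(\mathcal{P}(G))=\min\left\{|Q|,\min_C\kappa(\mathcal{P}(C))\right\}$ on which your whole first paragraph rests --- remains unproven.
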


Note that, the value of $\kappa(\mathcal{P}(C))$ in Theorem \ref{main-3}(i) can be obtained using the formula in Theorem \ref{cyclic-connectivity}(iii).

\section{Non-cyclic groups and minimal cut-sets}\label{sec-noncyclic}

Let $G$ be any group, need not be finite. For distinct elements $x,y\in G$, we write $x \sim y$ if they are adjacent in $\mathcal{P}(G)$. For $w\in G$, let $[w]$ be the set of all generators of the cyclic subgroup $\langle w \rangle$. Then $\mathcal{P}([w])$ is a clique in $\mathcal{P}(G)$. The following fundamental result is a generalization of \cite[Lemma 2.3]{cps} which can be obtained from \cite[Theorem 2.16]{panda}.

\begin{lemma}\label{equi-class}
If $X$ is a minimal cut-set of $\mathcal{P}(G)$, then either $[w]\subseteq X$ or $[w]\cap X=\emptyset$ for every $w\in G$.
\end{lemma}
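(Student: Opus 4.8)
The plan is to first isolate the combinatorial property of the sets $[w]$ that drives the result, and then run a short minimality argument. The crucial observation is that the vertices in $[w]$ behave like \emph{twins} in $\mathcal{P}(G)$: they form a clique (as already noted, $\mathcal{P}([w])$ is a clique), and any two of them have exactly the same neighbours outside $[w]$. Concretely, I would fix $x,y\in[w]$, so that $\langle x\rangle=\langle y\rangle=\langle w\rangle$, and take any $z\in G\setminus[w]$ with $z\sim x$. By definition of adjacency either $z\in\langle x\rangle$ or $x\in\langle z\rangle$; in the first case $z\in\langle y\rangle$, and in the second $\langle y\rangle=\langle x\rangle\subseteq\langle z\rangle$ gives $y\in\langle z\rangle$. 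Either way $z\sim y$, so the external neighbourhoods of $x$ and $y$ coincide. This step is entirely routine.

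With this in hand, I would argue by contradiction. Suppose $X$ is a minimal cut-set with $[w]\cap X\neq\emptyset$ and $[w]\not\subseteq X$, and pick $x\in[w]\cap X$ and $y\in[w]\setminus X$. Since $X$ is a cut-set there is a separation $(A,B)$ of $\mathcal{P}(G\setminus X)$, and we may assume $y\in A$. Because $[w]$ is a clique and no edges run between $A$ and $B$, every vertex of $[w]\setminus X$ must lie in $A$ together with $y$.

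The heart of the argument is then to show that $(A\cup\{x\},B)$ is a separation of $\mathcal{P}(G\setminus(X\setminus\{x\}))$, which forces $X\setminus\{x\}$ to be a cut-set and contradicts the minimality of $X$. For this I only need to rule out an edge from $x$ to some $z\in B$. If such a $z$ lies in $[w]$, then $z\in[w]\setminus X\subseteq A$, contradicting $z\in B$; if $z\notin[w]$, then the twin property gives $z\sim y$ with $y\in A$, again contradicting that $(A,B)$ is a separation. Hence no such edge exists, and the two sides $A\cup\{x\}$ and $B$ are both non-empty, so $X\setminus\{x\}$ genuinely is a cut-set.

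The step I expect to require the most care is this last one: one must keep track of the two ways in which a neighbour $z$ of $x$ can arise (inside or outside $[w]$) and make sure both are excluded, and one must confirm non-emptiness of both sides so that $X\setminus\{x\}$ qualifies as a cut-set. Note that the entire argument uses only adjacency and separations, never finiteness or the existence of connected components, so it applies verbatim to the not-necessarily-finite group $G$ in the statement.
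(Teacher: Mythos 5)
Your proof is correct, but it is not a variant of the paper's argument, because the paper does not actually prove this lemma: it is imported by citation, stated as a generalization of Lemma 2.3 of \cite{cps} ``which can be obtained from'' Theorem 2.16 of \cite{panda}. Your argument is therefore a genuinely self-contained replacement. Its engine is the observation that the elements of $[w]$ are twins in $\mathcal{P}(G)$: they form a clique, and any two $x,y\in[w]$ have the same neighbours outside $[w]$, since $\langle x\rangle=\langle y\rangle$ makes both adjacency conditions ($z\in\langle x\rangle$ or $x\in\langle z\rangle$) transfer verbatim from $x$ to $y$. Given that, moving a vertex $x\in[w]\cap X$ onto the side $A$ of the separation that contains $[w]\setminus X$ still yields a separation, so $X\setminus\{x\}$ is a cut-set, contradicting minimality. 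Two points you handled that are easy to get wrong: the neighbour $z\in B$ of $x$ must be split into the cases $z\in[w]$ and $z\notin[w]$ (the twin property only covers the latter, while the former is excluded because $[w]\setminus X\subseteq A$), and both sides of the new separation must be verified non-empty and to have union $G\setminus(X\setminus\{x\})$. You also correctly avoid any appeal to finiteness or to connected components, which matters because Section 2 of the paper states the lemma for an arbitrary, possibly infinite, group $G$. What the citation route buys the paper is brevity; what your route buys is a complete proof at exactly the stated level of generality.
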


For $x\in G$, let $N(x)$ be the neighborhood of $x$ in $\mathcal{P}(G)$, that is, the set of all elements of $G$ which are adjacent to $x$. Note that either $N(x)=G\setminus \{x\}$ or $N(x)$ is a cut-set of $\mathcal{P}(G)$. In the latter case, $N(x)$ is not a minimal cut-set if the order of $x$ is at least $3$. This can be seen by taking $X=N(x)$ and $w=x$ in Lemma \ref{equi-class}, also see \cite[Theorem 2.29]{panda}. \medskip

A cyclic subgroup $M$ of $G$ is called a {\it maximal cyclic subgroup} if it is not properly contained in any cyclic subgroup of $G$. So if $G$ is a cyclic group, then the only maximal cyclic subgroup of $G$ is itself. In a finite group, every element is contained in a maximal cyclic subgroup. However, this statement need not hold in an infinite group: $(\mathbb{Q},+)$ is an example.

We denote by $\mathcal{M}(G)$ the collection of all maximal cyclic subgroups of $G$. For $M\in\mathcal{M}(G)$, the following result ensures that $\widetilde{M}$ is always a cut-set of $\mathcal{P}(G)$ when $G$ is non-cyclic.

\begin{proposition}\label{all-group}
Suppose that $G$ is a non-cyclic group and let $M\in\mathcal{M}(G)$. If $A:=G\setminus M$ and $B:=M\setminus \widetilde{M}$, then $(A,B)$ is a separation of $\mathcal{P}(G\setminus \widetilde{M})$. In particular,  $\widetilde{M}$ is a cut-set of $\mathcal{P}(G)$.
\end{proposition}

\begin{proof}
Clearly, $A\cap B$ is empty and $A\cup B= G\setminus \widetilde{M}$. Since $G$ is not cyclic, $M$ is properly contained in $G$ and so $A$ is non-empty. Also, $B$ is non-empty as it consists of all the generators of $M$.

Let $b\in B$. For any integer $k$, the element $b^k$ is either in $B$ or in $\widetilde{M}$ according as it is a generator of $M$ or not. This implies that no element of $A$ can be obtained as a power of any element of $B$.
Conversely, suppose that $a^t\in B$ for some $a\in A$ and some integer $t$. Then $a\notin M$. By the definition of the set $B$, $a^t$ is a generator of $M$ and so $M=\langle a^t\rangle\leq \langle a\rangle$. Since $M$ is a maximal cyclic subgroup of $G$, we must have $M= \langle a^t\rangle= \langle a\rangle$. This gives $a\in M$, a contradiction. So no element of $B$ can be obtained as a power of any element of $A$.

Thus there is no edge in $\mathcal{P}(G\setminus \widetilde{M})$ containing one element from $A$ and the other from $B$. Therefore, $(A,B)$ is a separation of $\mathcal{P}(G\setminus \widetilde{M})$ and hence $\widetilde{M}$ is a cut-set of $\mathcal{P}(G)$.
\end{proof}

\begin{corollary}
If $G$ is a finite non-cyclic group, then $\kappa(\mathcal{P}(G))\leq \min\{|\widetilde{M}|:M\in\mathcal{M}(G)\}$.
\end{corollary}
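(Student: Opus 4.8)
The plan is to derive this immediately from Proposition \ref{all-group} together with the relationship between vertex connectivity and minimum cut-sets recorded in the introduction. The first step is to observe that, since $G$ is a finite non-cyclic group, it is in particular not a cyclic group of prime power order; hence by the cited characterization of completeness (\cite[Theorem 2.12]{ivy}), the power graph $\mathcal{P}(G)$ is not complete. This matters because the identity $\kappa(\Gamma)=|X|$ for a minimum cut-set $X$ is stated only for non-complete graphs $\Gamma$, and we need to know we are in that regime.

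Given non-completeness, $\kappa(\mathcal{P}(G))$ equals the cardinality of a minimum cut-set of $\mathcal{P}(G)$, and therefore $\kappa(\mathcal{P}(G))\leq |X|$ for every cut-set $X$ of $\mathcal{P}(G)$. The second step is to invoke Proposition \ref{all-group}: for each $M\in\mathcal{M}(G)$, the set $\widetilde{M}$ of non-generators of $M$ is a cut-set of $\mathcal{P}(G)$. Combining these two facts gives $\kappa(\mathcal{P}(G))\leq |\widetilde{M}|$ for every $M\in\mathcal{M}(G)$.

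Finally, since $G$ is finite, $\mathcal{M}(G)$ is a finite and (as every element lies in some maximal cyclic subgroup) nonempty collection, so the minimum $\min\{|\widetilde{M}|:M\in\mathcal{M}(G)\}$ is attained. Taking this minimum over the inequalities just established yields $\kappa(\mathcal{P}(G))\leq \min\{|\widetilde{M}|:M\in\mathcal{M}(G)\}$, as required. There is no real obstacle here: the entire content of the corollary is packaged in Proposition \ref{all-group}, and the only point requiring a moment's care is verifying non-completeness so that the connectivity-equals-minimum-cut-set identity is legitimately applicable.
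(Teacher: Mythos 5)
Your proof is correct and follows exactly the route the paper intends: the paper states this corollary without proof as an immediate consequence of Proposition \ref{all-group}, since $\widetilde{M}$ being a cut-set for every $M\in\mathcal{M}(G)$ bounds $\kappa(\mathcal{P}(G))$ by each $|\widetilde{M}|$. Your additional care in verifying non-completeness of $\mathcal{P}(G)$ (so that $\kappa$ equals the size of a minimum cut-set) is a legitimate and correctly handled detail, not a deviation.
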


For $M\in\mathcal{M}(G)$, let $\overline{M}$ be the union of all sets of the form $M\cap \langle y\rangle$, where $y\in G\setminus M$. Observe that the generators of any member of $\mathcal{M}(G)$ (in particular, of $M$) are not contained in $\overline{M}$. So $\overline{M}\subseteq \widetilde{M}\subsetneq M$. There are groups for which $\overline{M}$ is properly contained in $\widetilde{M}$ for every $M\in\mathcal{M}(G)$ (see Proposition \ref{M-M-1}).

\begin{proposition}\label{M-bar}
If $G$ is a non-cyclic group, then $\overline{M}$ is a cut-set of $\mathcal{P}(G)$ for every $M\in\mathcal{M}(G)$.
\end{proposition}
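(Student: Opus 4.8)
The plan is to exhibit an explicit separation of $\mathcal{P}(G\setminus\overline{M})$, reusing the partition that worked in Proposition \ref{all-group} but enlarging the ``inside'' piece to account for the fact that we are now deleting only $\overline{M}$. Concretely, I would set $A:=G\setminus M$ and $B:=M\setminus\overline{M}$. Since $\overline{M}\subseteq M$, these are disjoint and satisfy $A\cup B=G\setminus\overline{M}$. Non-emptiness is immediate: $A\neq\emptyset$ because $G$ is non-cyclic, so $M$ is a proper subgroup; and $B\neq\emptyset$ because $\overline{M}\subseteq\widetilde{M}$, whence $B$ contains all the generators of $M$.

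The heart of the argument is to show that no edge of $\mathcal{P}(G\setminus\overline{M})$ joins a vertex of $A$ to a vertex of $B$. Take $a\in A$ and $b\in B$ and suppose $a\sim b$, so either $a\in\langle b\rangle$ or $b\in\langle a\rangle$. If $a\in\langle b\rangle$, then since $b\in M$ we have $\langle b\rangle\leq M$ and hence $a\in M$, contradicting $a\in A=G\setminus M$. If instead $b\in\langle a\rangle$, then $b\in M\cap\langle a\rangle$; but $a\in G\setminus M$, so by the definition of $\overline{M}$ the set $M\cap\langle a\rangle$ is contained in $\overline{M}$, forcing $b\in\overline{M}$ and contradicting $b\in B=M\setminus\overline{M}$. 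Thus $(A,B)$ is a separation of $\mathcal{P}(G\setminus\overline{M})$, and therefore $\overline{M}$ is a cut-set of $\mathcal{P}(G)$.

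I expect this to be more direct than Proposition \ref{all-group} rather than harder, since the set $\overline{M}$ is defined precisely to absorb every element of $M$ that can be reached as a power of an element lying outside $M$; consequently the direction $b\in\langle a\rangle$ now closes off immediately by the definition of $\overline{M}$, instead of requiring the maximality of $M$ that was invoked before. The only point that calls for a moment's care is ensuring that $B$ remains non-empty after deleting $\overline{M}$, and here I would lean on the earlier observation that $\overline{M}\subseteq\widetilde{M}\subsetneq M$, which guarantees that the generators of $M$ survive in $B$. Because $\overline{M}\subseteq\widetilde{M}$, this proposition also refines Proposition \ref{all-group}: it disconnects $\mathcal{P}(G)$ after removing a possibly smaller set of vertices.
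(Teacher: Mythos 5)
Your proof is correct, but it takes a genuinely different route from the paper's. The paper argues by contradiction with a path argument: it fixes a generator $x$ of $M$ and some $y\in G\setminus M$, assumes $\mathcal{P}(G\setminus\overline{M})$ is connected, takes a path $x=x_0,x_1,\ldots,x_n=y$, and looks at the first vertex $x_j\notin\langle x\rangle$; then $x_{j-1}\in\langle x\rangle\cap\langle x_j\rangle=M\cap\langle x_j\rangle$ with $x_j\notin M$ forces $x_{j-1}\in\overline{M}$, a contradiction. You instead give a direct proof by exhibiting the separation $(G\setminus M,\, M\setminus\overline{M})$, in the style of Proposition \ref{all-group}, and your two-case edge analysis is sound: the case $a\in\langle b\rangle$ is killed because $\langle b\rangle\leq M$, and the case $b\in\langle a\rangle$ is killed because $M\cap\langle a\rangle\subseteq\overline{M}$ by the very definition of $\overline{M}$. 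Your approach buys something the paper only recovers later: it explicitly identifies the two sides of a separation of $\mathcal{P}(G\setminus\overline{M})$, which is exactly the decomposition the paper reconstructs in Proposition \ref{M-bar-min} (there, under extra hypotheses, by combining the cut-set property with connectedness of each side), and it makes transparent that this separation refines the one in Proposition \ref{all-group} since $\overline{M}\subseteq\widetilde{M}$. One small caveat on your closing remark: maximality of $M$ is not entirely avoided in your argument; it is still needed for $B\neq\emptyset$, since the fact that generators of $M$ lie outside $\overline{M}$ (equivalently $\overline{M}\subseteq\widetilde{M}\subsetneq M$) is itself proved using maximality --- you correctly lean on that prior observation, so this is a matter of bookkeeping, not a gap.
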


\begin{proof}
Let $M=\langle x\rangle$, $x\in G$. Since $G$ is non-cyclic, there exists $y\in G\setminus M$. Thus $y\notin \langle x\rangle$ and $x,y\notin \overline{M}$. Since $M$ is a maximal cyclic subgroup, we also have $x\notin \langle y\rangle$.

Suppose that $\overline{M}$ is not a cut set of $\mathcal{P}(G)$. Then there exists a path from $x$ to $y$ in $\mathcal{P}(G\setminus\overline{M})$. Let $x=x_0,x_1,x_2,\cdots, x_n=y$ be such a path, where $x_k\notin \overline{M}$ for $k\in\{0,1,\ldots,n\}$. We have $n\geq 2$ as $x\notin \langle y\rangle$ and $y\notin \langle x\rangle$. Let $j\in\{1,2,\cdots, n\}$ be the smallest integer such that $x_j\notin \langle x\rangle$. We then have $x_{j-1}\in\langle x_j\rangle$ as $x_{j-1}\sim x_j$. Thus $x_{j-1}$ is an element of $\langle x\rangle\cap \langle x_j\rangle=M\cap \langle x_j\rangle$, where $x_j\notin \langle x\rangle=M$. This gives $x_{j-1}\in\overline{M}$, contradicting that each $x_k$ is outside $\overline{M}$.
\end{proof}

The proof of Proposition \ref{M-bar} is similar to that of \cite[Theorem 10]{mir}, in which the following corollary was obtained.

\begin{corollary}
If $G$ is a finite non-cyclic group, then $\kappa(\mathcal{P}(G))\leq \min\{|\overline{M}|:M\in\mathcal{M}(G)\}$.
\end{corollary}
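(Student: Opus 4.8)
The plan is to deduce this directly from Proposition \ref{M-bar} together with the characterization of vertex connectivity recalled in the introduction, so almost all the work has already been done. First I would observe that since $G$ is finite and non-cyclic, the power graph $\mathcal{P}(G)$ is not a complete graph: by the cited result, the power graph of a finite group is complete if and only if the group is cyclic of prime power order. In particular $\mathcal{P}(G)$ possesses at least one cut-set, and so $\kappa(\mathcal{P}(G))$ equals the size of a minimum cut-set of $\mathcal{P}(G)$.

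Next I would invoke Proposition \ref{M-bar}, which guarantees that for every $M\in\mathcal{M}(G)$ the set $\overline{M}$ is a cut-set of $\mathcal{P}(G)$. Since the minimum cut-set size is a lower bound for the size of any particular cut-set, this yields $\kappa(\mathcal{P}(G))\leq |\overline{M}|$ for each $M\in\mathcal{M}(G)$. Finally, because $G$ is finite the collection $\mathcal{M}(G)$ is finite and non-empty (every element of a finite group lies in a maximal cyclic subgroup), so I would take the minimum over all $M\in\mathcal{M}(G)$ to conclude that $\kappa(\mathcal{P}(G))\leq \min\{|\overline{M}|:M\in\mathcal{M}(G)\}$.

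There is essentially no obstacle here; the entire content is carried by Proposition \ref{M-bar}, and the corollary is merely the restatement of ``$\overline{M}$ is a cut-set'' in terms of vertex connectivity. The only point that warrants a moment's care is verifying that $\mathcal{P}(G)$ is genuinely non-complete, so that $\kappa$ coincides with the minimum cut-set size rather than being forced to equal $|G|-1$; this follows at once from the non-cyclicity of $G$ via the completeness criterion above.
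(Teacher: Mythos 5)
Your proof is correct and follows exactly the paper's (implicit) argument: the corollary is an immediate consequence of Proposition \ref{M-bar}, since each $\overline{M}$ is a cut-set and $\kappa(\mathcal{P}(G))$ is bounded above by the size of any cut-set. Your added check that $\mathcal{P}(G)$ is non-complete (so that $\kappa$ really equals the minimum cut-set size) is a sound refinement of a detail the paper leaves unstated.
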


We shall prove in the next section that $\overline{M}$ is a minimal cut-set of $\mathcal{P}(G)$ if $G$ is a finite nilpotent group with at least two non-cyclic Sylow subgroups (see Proposition \ref{M-bar-min}).

\begin{proposition}\label{gen-connected}
Suppose that $G$ is a non-cyclic group and $X$ is a minimal cut-set of $\mathcal{P}(G)$. Then the following two statements are equivalent:
\begin{enumerate}
\item[(1)] $X$ has no element which will generate a member of $\mathcal{M}(G)$.
\item[(2)] $\mathcal{P}(M\setminus X)$ is connected for every $M\in\mathcal{M}(G)$.
\end{enumerate}
\end{proposition}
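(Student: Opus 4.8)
The plan is to establish $(1)\Rightarrow(2)$ directly and $(2)\Rightarrow(1)$ by contraposition. For the forward implication, fix $M=\langle x\rangle\in\mathcal{M}(G)$. Each generator of $M$ generates $M$, which is a member of $\mathcal{M}(G)$, so statement (1) forces $[x]\cap X=\emptyset$; thus every generator of $M$ survives in $M\setminus X$. The key point is that a generator dominates $M$: for a fixed $g_0\in[x]$ and any $m\in M$ with $m\neq g_0$ we have $m\in M=\langle g_0\rangle$, hence $m\sim g_0$. Therefore every vertex of $M\setminus X$ is adjacent to $g_0\in M\setminus X$, and $\mathcal{P}(M\setminus X)$ is connected. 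Note this direction uses neither minimality nor Lemma \ref{equi-class}.

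For the reverse implication I would argue the contrapositive: assuming some $g\in X$ generates $M:=\langle g\rangle\in\mathcal{M}(G)$, I shall produce a member of $\mathcal{M}(G)$ (in fact $M$ itself) for which the induced subgraph is disconnected. Two inputs drive this. First, an elementary neighbourhood computation: since $M$ is a maximal cyclic subgroup, $g\in\langle h\rangle$ forces $\langle h\rangle=M$, so every neighbour of $g$ lies in $M$; explicitly $N(g)=M\setminus\{g\}$. Second, the minimality of $X$: since $X\setminus\{g\}$ is not a cut-set, the graph obtained from the disconnected graph $\mathcal{P}(G\setminus X)$ by adjoining the single vertex $g$ is connected.

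Combining these, $g$ must be adjacent to at least one vertex in each connected component of $\mathcal{P}(G\setminus X)$; otherwise adjoining $g$ would leave the graph disconnected and make $X\setminus\{g\}$ a cut-set. As $\mathcal{P}(G\setminus X)$ has at least two components, I obtain neighbours $a,b$ of $g$ lying in two distinct components, and by the neighbourhood identity $a,b\in M\setminus X$. Since $\mathcal{P}(M\setminus X)$ is an induced subgraph of $\mathcal{P}(G\setminus X)$, no $a$--$b$ path exists in $\mathcal{P}(M\setminus X)$, so it is disconnected and (2) fails.

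The only computations involved are routine (that adjoining one vertex to a disconnected graph forces adjacency into at least two components, and the neighbourhood identity for maximal cyclic subgroups). The conceptual heart, and the step I expect to require the most care, is the reverse direction: extracting from the mere failure of $X\setminus\{g\}$ to be a cut-set two genuinely separated neighbours of $g$, and then confining those neighbours to $M$ by maximality so that the disconnection is witnessed \emph{inside} a single maximal cyclic subgroup rather than merely in $\mathcal{P}(G\setminus X)$.
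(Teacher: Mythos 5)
Your proof is correct and takes essentially the same route as the paper's: the forward direction via a surviving generator dominating $M\setminus X$, and the reverse direction via minimality of $X$ forcing the generator $g$ to have neighbours in distinct components of $\mathcal{P}(G\setminus X)$, which the maximality of $M$ then confines to $M\setminus X$. The only cosmetic difference is that you argue by contraposition where the paper assumes (2), supposes a generator lies in $X$, and derives a contradiction.
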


\begin{proof}
Clearly, $(1)\Rightarrow (2)$, since all the generators of $M$ are in $M\setminus X$. Now, assume (2). On the contrary, suppose that $X$ contains a generator, say $\alpha$, of some $M\in\mathcal{M}(G)$. Set $\widehat{X}=X\setminus\{\alpha\}$ and fix a separation $(A,B)$ of $\mathcal{P}(G\setminus X)$.
Since $X$ is a minimal cut-set, the subgraph $\mathcal{P}(G\setminus \widehat{X})$ is connected. So there exists $a\in A$ and $b\in B$ such that $a\sim \alpha\sim b$. Since $\alpha$ generates a maximal cyclic subgroup of $G$, each of $a$ and $b$ can be obtained as a power of $\alpha$. It follows that $a,b$ are two distinct elements in $M\setminus X$.
By our assumption, $\mathcal{P}(M\setminus X)$ is connected. This means that there is a path between $a$ and $b$ in $\mathcal{P}(M\setminus X)$ and hence in $\mathcal{P}(G\setminus X)$, contradicting that $(A,B)$ is a separation of $\mathcal{P}(G\setminus X)$.
\end{proof}

\begin{proposition}\label{existence-0}
Suppose that $G$ is a non-cyclic group in which every element is contained in a maximal cyclic subgroup of $G$. Let $X$ be a cut-set of $\mathcal{P}(G)$. If there exists a subset $D$ of $G$ which is contained in every member of $\mathcal{M}(G)$ but not contained in $X$, then there exists $M\in \mathcal{M}(G)$ such that $\mathcal{P}(M\setminus X)$ is disconnected.
\end{proposition}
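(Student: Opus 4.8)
The plan is to exploit the hypothesis that $D\not\subseteq X$ by extracting a witness element of $D$ lying outside $X$, and then to use the asymmetry between that element and an arbitrary element of the other side of a separation. First I would pick $d\in D$ with $d\notin X$; such a $d$ exists precisely because $D$ is not contained in $X$. Since $D$ is contained in every member of $\mathcal{M}(G)$, the chosen element $d$ belongs to \emph{every} maximal cyclic subgroup of $G$.

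Next, since $X$ is a cut-set, $\mathcal{P}(G\setminus X)$ is disconnected, so I fix a separation $(A,B)$ of it. The element $d$ lies in $G\setminus X$, hence in $A$ or in $B$; without loss of generality assume $d\in A$. Choosing any $b\in B$ (non-empty by the definition of a separation), the hypothesis that every element of $G$ lies in some maximal cyclic subgroup supplies an $M\in\mathcal{M}(G)$ with $b\in M$. Because $d$ belongs to every maximal cyclic subgroup, we also have $d\in M$.

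Finally, I would argue that this $M$ is the required subgroup. Both $d$ and $b$ lie in $M\setminus X$, with $d\in A$ and $b\in B$. Since $M\setminus X\subseteq G\setminus X=A\cup B$ and there is no edge of $\mathcal{P}(G\setminus X)$ joining a vertex of $A$ to a vertex of $B$, the same holds in the induced subgraph $\mathcal{P}(M\setminus X)$. Thus $\left((M\setminus X)\cap A,\ (M\setminus X)\cap B\right)$ is a separation of $\mathcal{P}(M\setminus X)$ with both parts non-empty, and so $\mathcal{P}(M\setminus X)$ is disconnected.

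The argument is short, and the only real decision point — which I expect to be the crux rather than a genuine obstacle — is recognizing that one must separate the roles of $d$ and $b$: the element $d$ is forced into \emph{every} maximal cyclic subgroup, whereas $b$ need only lie in \emph{some} one of them, and pairing the two inside the common subgroup $M$ is exactly what yields the disconnection. No nontrivial computation is involved; the entire content lies in tracking membership among $A$, $B$, $X$, and $M$.
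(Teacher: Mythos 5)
Your proof is correct and is essentially the paper's argument run in the contrapositive direction: the paper assumes every $\mathcal{P}(M\setminus X)$ is connected and uses a common element $u\in D\setminus X$ as a hub to show $\mathcal{P}(G\setminus X)$ would be connected, whereas you start from a separation $(A,B)$ and use the same common element $d$, paired with any $b$ on the opposite side, to exhibit a disconnected $\mathcal{P}(M\setminus X)$ directly. Both arguments rest on the identical key observation that an element of $D\setminus X$ lies in every maximal cyclic subgroup, so the substance is the same (your version has the minor virtue of explicitly producing the witness $M$).
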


\begin{proof}
Suppose that $\mathcal{P}(M\setminus X)$ is connected for every $M\in \mathcal{M}(G)$. Consider two distinct elements $x_1,x_2$ of $G\setminus X$ and let $M_1,M_2\in \mathcal{M}(G)$ containing $x_1,x_2$, respectively. Since $D\nsubseteq X$, there exists an element $u\in D\setminus X$. Since each of $M_1$ and $M_2$ contains $D$, we have $u\in M_1\setminus X$ and $u\in M_2\setminus X$. By our assumption, each of $\mathcal{P}(M_1\setminus X)$ and $\mathcal{P}(M_2\setminus X)$ is connected. So either $u=x_j$ or there is a path between $u$ and $x_j$ in $\mathcal{P}(M_j\setminus X)$, $j=1,2$. This implies that there is a path between $x_1$ and $x_2$ in $\mathcal{P}(G\setminus X)$. Since $x_1,x_2\in G\setminus X$ are arbitrary, it follows that $\mathcal{P}(G\setminus X)$ is connected, a contradiction.
\end{proof}

\subsection{Finite non-cyclic nilpotent groups}

For $x\in G$, we shall denote by $o(x)$ the order of $x$. The following two lemmas are useful, the first one can be found in \cite[5.3.6]{robinson} and the second one in \cite[Theorem 10.9]{rose}.

\begin{lemma}\label{p-group}\cite{robinson}
Let $G$ be a finite $p$-group, $p$ prime. Then $G$ has exactly one subgroup of order $p$ if and only if it is cyclic, or $p=2$ and $G$ is a generalized quaternion $2$-group.
\end{lemma}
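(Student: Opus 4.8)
The plan is to prove the two implications separately, with essentially all of the work in the converse. The forward direction is immediate: a cyclic $p$-group has exactly one subgroup of each order dividing $|G|$, and a generalized quaternion $2$-group has a unique involution (its centre has order $2$), hence a unique subgroup of order $p=2$. For the converse I would argue by induction on $|G|$, assuming $G$ is a finite $p$-group possessing a unique subgroup $H$ of order $p$ and aiming to show that $G$ is cyclic, or that $p=2$ and $G$ is generalized quaternion.

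The first reduction I would make is to the abelian case. Any abelian subgroup $A\le G$ inherits the property of having a unique subgroup of order $p$; by the fundamental theorem of finite abelian groups the number of subgroups of order $p$ in $A\cong C_{p^{a_1}}\times\cdots\times C_{p^{a_t}}$ equals $1+p+\cdots+p^{t-1}$, which is $1$ precisely when $t=1$. Hence every abelian subgroup of $G$ is cyclic, in particular every abelian normal subgroup and the centre $Z(G)$. This settles the case $G$ abelian and isolates the structural statement that genuinely needs proving: a $p$-group all of whose abelian normal subgroups are cyclic is cyclic or generalized quaternion.

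To prove that statement I would pick a maximal normal abelian subgroup $A=\langle a\rangle$ of $G$ (cyclic by the previous step, say of order $p^m$) and invoke the standard fact that in a $p$-group the centralizer of a maximal normal abelian subgroup is the subgroup itself, so $C_G(A)=A$. Conjugation then embeds $G/A=G/C_G(A)$ into $\mathrm{Aut}(A)\cong\mathrm{Aut}(C_{p^m})$. For $p$ odd this automorphism group is cyclic, which forces $G/A$ cyclic and, by a short direct computation with the element realizing the order-$p$ automorphism $a\mapsto a^{1+p^{m-1}}$ (any such element outside $A$ would furnish a second subgroup of order $p$), forces $G=A$ to be cyclic. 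For $p=2$ one instead has $\mathrm{Aut}(C_{2^m})\cong C_2\times C_{2^{m-2}}$, and here the argument becomes delicate.

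The main obstacle is exactly this $p=2$, non-abelian case. After embedding $G/A$ into $C_2\times C_{2^{m-2}}$ one must show that the unique-involution hypothesis selects precisely the extension giving the quaternion relations and rules out the dihedral, semidihedral and modular alternatives. Concretely, I would take $b\in G\setminus A$ with $bA$ of order $2$ in $G/A$; the involutory automorphisms of $\langle a\rangle$ are $a\mapsto a^{-1}$, $a\mapsto a^{2^{m-1}+1}$, and $a\mapsto a^{2^{m-1}-1}$, and in each case I would compute $o(b)$ together with all involutions of $\langle a,b\rangle$. Demanding that $H=\langle a^{2^{m-1}}\rangle$ be the \emph{only} subgroup of order $2$ eliminates every possibility except the action $a\mapsto a^{-1}$ with $b^2=a^{2^{m-1}}$, which is exactly the generalized quaternion presentation; an induction then lifts the conclusion from $\langle a,b\rangle$ to all of $G$. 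This case-by-case verification of the involutions inside each candidate extension is the technical heart of the argument and the step I expect to absorb most of the effort.
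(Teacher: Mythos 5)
First, note that the paper itself does not prove this lemma: it is quoted directly from \cite[5.3.6]{robinson}. So your sketch can only be compared with the standard textbook argument, and the route you choose is essentially that argument: reduce to the abelian case via the count $1+p+\cdots+p^{t-1}$ of subgroups of order $p$, pass to a maximal normal abelian subgroup $A=\langle a\rangle$ with $C_G(A)=A$, and analyse the embedding $G/A\hookrightarrow\mathrm{Aut}(A)$. The forward direction, the abelian reduction, the odd case, and the case analysis of the three involutory automorphisms of $C_{2^m}$ are all correct as sketched; the computations you defer do go through (for instance, for the action $a\mapsto a^{2^{m-1}+1}$ one has $b^2=a^k$ with $k$ even, and solving $j(2+2^{m-1})+k\equiv 0 \pmod{2^m}$ produces an involution $a^jb$ outside $A$; similarly for $a\mapsto a^{2^{m-1}-1}$).

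The genuine gap is your final sentence: ``an induction then lifts the conclusion from $\langle a,b\rangle$ to all of $G$.'' Your case analysis only treats subgroups $B$ with $A\le B$ and $[B:A]=2$; it does not rule out $[G:A]\geq 4$, and no induction will do so. Indeed, if you run a minimal-counterexample induction, the surviving configuration is precisely: $G/A$ cyclic of order $4$ (it is abelian with a unique involution, because your case analysis forces every involution of $G/A$ to act on $A$ as inversion and the embedding into $\mathrm{Aut}(A)$ is faithful), generated by a coset $cA$ whose square acts as inversion. Proper subgroups give no contradiction in this configuration; what excludes it is arithmetic, not induction: inversion is not the square of any automorphism of $C_{2^m}$, since $x^2\equiv -1\pmod 4$ has no solution (concretely, in $(\mathbb{Z}/2^m)^{\times}\cong\{\pm 1\}\times\langle 5\rangle$ every element of order $4$ squares to $1+2^{m-1}$, not to $-1$). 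With this one observation the proof closes: $G/A$ has at most one involution, hence is cyclic; if $|G/A|\geq 4$ its unique involution would be a square in $\mathrm{Aut}(A)$ equal to inversion, which is impossible; therefore $[G:A]\leq 2$, so $G$ is either $A$ (cyclic) or the generalized quaternion group $\langle a,b\rangle$. You should replace the appeal to induction by this argument.
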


\begin{lemma} \label{nilpotent-equi}\cite{rose}
Let $G$ be a finite nilpotent group. Then the following hold:
\begin{enumerate}
\item[(i)] If $a,b\in G$ and $o(a), o(b)$ are relatively prime, then $a$ and $b$ commute in $G$.
\item[(ii)] All Sylow subgroups of $G$ are normal in $G$.
\item[(iii)] $G$ is isomorphic to the direct product of its Sylow subgroups.
\end{enumerate}
\end{lemma}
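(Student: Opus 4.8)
The plan is to establish the three parts in the order (ii), then (iii), then (i), since part (ii) carries the entire weight of the nilpotency hypothesis while the remaining two are essentially bookkeeping. The single tool I would isolate first is the \emph{normalizer condition}: in a finite nilpotent group $G$, every proper subgroup $H$ is properly contained in its normalizer $N_G(H)$. To prove this I would take an upper central series $1=Z_0\leq Z_1\leq\cdots\leq Z_c=G$ witnessing nilpotency, choose the largest index $i$ with $Z_i\leq H$ (such an $i$ exists because $Z_0\leq H$, and $i<c$ because $H\neq G$), and then observe that $[Z_{i+1},H]\leq[Z_{i+1},G]\leq Z_i\leq H$. Hence $Z_{i+1}$ normalizes $H$, so $Z_{i+1}\leq N_G(H)$, while $Z_{i+1}\nleq H$ forces $N_G(H)\supsetneq H$.

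With the normalizer condition in hand, I would argue part (ii) as follows. Let $P$ be a Sylow $p$-subgroup of $G$ and put $N=N_G(P)$. Since $P\trianglelefteq N$, $P$ is the unique Sylow $p$-subgroup of $N$ and therefore characteristic in $N$; consequently any $g$ normalizing $N$ also normalizes $P$, which gives $g\in N$. Thus $N$ is self-normalizing, i.e. $N_G(N)=N$. If $N$ were a proper subgroup of $G$, this would contradict the normalizer condition, so $N=G$, that is, $P\trianglelefteq G$. This is the step I expect to be the main obstacle: it is where nilpotency is genuinely used, and assembling the self-normalizing argument (unique Sylow subgroup $\Rightarrow$ characteristic $\Rightarrow$ self-normalizing) is the delicate part; everything else reduces to counting.

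Given (ii), parts (iii) and (i) follow routinely. Let $P_1,\dots,P_r$ be the Sylow subgroups for the distinct primes dividing $|G|$. Their pairwise intersections are trivial because the orders are coprime, each is normal by (ii), and more generally $P_i\cap(P_1\cdots P_{i-1}P_{i+1}\cdots P_r)=\{1\}$ since the second factor has order coprime to $p_i$; as the product $P_1P_2\cdots P_r$ is then a subgroup of order $\prod_i|P_i|=|G|$, these conditions yield the internal direct product $G\cong P_1\times\cdots\times P_r$, which is (iii). For (i), I would write any $g\in G$ uniquely as $g=g_1\cdots g_r$ with $g_i\in P_i$ and the $g_i$ mutually commuting, so that $o(g)=\operatorname{lcm}_i o(g_i)$ and the primes dividing $o(g)$ are exactly those $p_i$ with $g_i\neq1$. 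Applying this to $a$ and $b$: since $o(a)$ and $o(b)$ are coprime, whenever the $P_i$-component of $a$ is nontrivial the corresponding component of $b$ is trivial and conversely, so the nontrivial components of $a$ and $b$ lie in distinct direct factors and hence commute; therefore $a$ and $b$ commute, proving (i).
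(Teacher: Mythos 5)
Your proof is correct. Note that the paper does not prove this lemma at all \textemdash{} it is imported verbatim from the cited reference (Rose, Theorem 10.9) \textemdash{} and your argument (normalizer condition via the upper central series, self-normalizing Sylow normalizers to get (ii), then the internal direct product for (iii) and the component-wise coprimality argument for (i)) is exactly the standard textbook proof of that result, so there is nothing here that diverges from the treatment the paper relies on.
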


We shall use Lemma \ref{nilpotent-equi}(i) mostly without mention. The following elementary result is used frequently while defining paths/walks between to distinct connected vertices.

\begin{lemma}\label{xypath}
Let $G$ be a finite nilpotent group. If $x,y$ are two non-identity elements of $G$ such that $o(x)$ and $o(y)$ are relatively prime, then $x\sim xy\sim y$ in $\mathcal{P}(G)$.
\end{lemma}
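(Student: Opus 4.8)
The plan is to exhibit both $x$ and $y$ as powers of the single element $xy$, so that each is adjacent to $xy$ directly from the definition of the power graph. First I would invoke Lemma \ref{nilpotent-equi}(i): since $G$ is nilpotent and $o(x), o(y)$ are relatively prime, the elements $x$ and $y$ commute. Writing $m=o(x)$ and $n=o(y)$ with $\gcd(m,n)=1$, commutativity lets me expand powers of $xy$ as $(xy)^k=x^ky^k$ for every integer $k$. I would also record that $xy\neq x$ and $xy\neq y$, since $y$ and $x$ are non-identity, so the adjacencies being claimed are between genuinely distinct vertices, as required by the power graph definition.

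The crucial computation is $(xy)^n=x^ny^n=x^n$, using $y^n=1$. Because $\gcd(n,m)=1$, the power $x^n$ still has order $m$ and therefore generates $\langle x\rangle$; consequently $x\in\langle x^n\rangle=\langle (xy)^n\rangle\subseteq\langle xy\rangle$. By the symmetric computation $(xy)^m=x^my^m=y^m$ together with $\gcd(m,n)=1$, I get $y\in\langle y^m\rangle=\langle (xy)^m\rangle\subseteq\langle xy\rangle$ as well. Thus both $x$ and $y$ lie in $\langle xy\rangle$, so each can be obtained as an integral power of $xy$, which is precisely the condition yielding $x\sim xy$ and $xy\sim y$ in $\mathcal{P}(G)$.

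I do not anticipate a serious obstacle here; the statement is essentially the coprime-order case of the standard fact that a cyclic group splits as the product of its primary components. The only point requiring slight care is the step asserting $\langle x^n\rangle=\langle x\rangle$, which depends on $n$ being invertible modulo $m$ (equivalently $\gcd(m,n)=1$); I would justify it via the order formula $o(x^n)=m/\gcd(m,n)=m$. Everything else follows directly from nilpotency forcing $x$ and $y$ to commute.
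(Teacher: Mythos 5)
Your proposal is correct and is essentially the paper's own argument: both use Lemma \ref{nilpotent-equi}(i) to get $xy=yx$, then exploit $\gcd(o(x),o(y))=1$ to conclude $\langle x^{o(y)}\rangle=\langle x\rangle$, hence $x\in\langle xy\rangle$ (the paper writes this as $x=(xy)^{o(y)t}$ for a suitable integer $t$), with the symmetric computation for $y$. Your additional remark that $xy\neq x$ and $xy\neq y$ is a harmless (and technically welcome) touch that the paper leaves implicit.
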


\begin{proof}
Since $o(x)$ and $o(y)$ are relatively prime, we have $\left\langle x^{o(y)} \right\rangle=\langle x \rangle$ and so $\left(x^{o(y)}\right)^{t}=x$ for some integer $t$. Since $xy=yx$, $(xy)^{o(y) t}=x^{o(y) t}=x$ and so $x\sim xy$. A similar argument proves that $y\sim xy$.
\end{proof}

Now consider $G$ to be a {\it finite non-cyclic nilpotent group} with $|G|=p_1^{n_1}p_2^{n_2}\ldots p_r^{n_r}$.
For $1\leq i \leq r,$ let $P_i$ be the Sylow $p_i$-subgroup of $G$. Then at least one $P_i$ is non-cyclic. So $r\geq 2$ if some $P_j$ is cyclic. We have $|P_i|=p_i^{n_i}$ and $G=P_1P_2\cdots P_r$, an internal direct product of $P_1,P_2,\ldots,P_r$. If $M_i$ is a cyclic subgroup of $P_i$ for $1\leq i\leq r$, then $M_1M_2\cdots M_r$ is a cyclic subgroup of $G$.

Conversely, let $M$ be a cyclic subgroup of $G$. Then $M=\langle a \rangle$ for some $a\in G$. As $G=P_1P_2\cdots P_r$, we can write $a=a_1a_2\ldots a_r$, where $a_i\in P_i$ for $1\leq i \leq r$. Let $o(a_i)=p_i^{k_i}$ for some $k_i$ with $k_i\leq n_i$. Then $o(a)=p_1^{k_1}p_2^{k_2}\ldots p_r^{k_r}$. We have
$$a^{p_1^{k_1}\ldots p_{i-1}^{k_{i-1}}p_{i+1}^{k_{i+1}}\ldots p_r^{k_r}}=a_i^{p_1^{k_1}\ldots p_{i-1}^{k_{i-1}}p_{i+1}^{k_{i+1}}\ldots p_r^{k_r}}.$$
Since $p_i^{k_i}$ and $p_1^{k_1}\ldots p_{i-1}^{k_{i-1}}p_{i+1}^{k_{i+1}}\ldots p_r^{k_r}$ are relatively prime, we get
$$\left\langle a_i^{p_1^{k_1}\ldots p_{i-1}^{k_{i-1}}p_{i+1}^{k_{i+1}}\ldots p_r^{k_r}} \right\rangle=\langle a_i \rangle.$$
Set $M_i=\langle a_i \rangle$, a cyclic subgroup of $P_i$. Then it follows that $M_i$ is a subgroup of $M$ for each $i$ and hence $M_1M_2\cdots M_r$ is a subgroup of $M$. As $|M_i|=o(a_i)=p_i^{k_i}$, we have $|M_1M_2\cdots M_r|=|M_1||M_2|\cdots |M_r|=p_1^{k_1}p_2^{k_2}\ldots p_r^{k_r}=o(a)=|M|$ and so $M=M_1M_2\cdots M_r$. From the above discussion, it follows that $M$ is a maximal cyclic subgroup of $G$ if and only if $M_i$ is a maximal cyclic subgroup of $P_i$ for each $i$. Thus we have the following.

\begin{lemma}\label{max-cyclic}
Any maximal cyclic subgroup of $G$ is of the form $M_1M_2\cdots M_r$, where $M_i$ is a maximal cyclic subgroup of $P_i$, $1\leq i\leq r$.
\end{lemma}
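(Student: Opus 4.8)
The plan is to leverage the factorization of cyclic subgroups established in the discussion preceding the statement, and then to upgrade it to the maximal case by transferring maximality between a cyclic subgroup $M$ of $G$ and its $p_i$-components. From that discussion I will take two facts as given: (a) if $M_i$ is a cyclic subgroup of $P_i$ for each $i$, then the product $M_1M_2\cdots M_r$ is a cyclic subgroup of $G$; and (b) conversely, every cyclic subgroup $M=\langle a\rangle$ of $G$ decomposes as $M=\langle a_1\rangle\langle a_2\rangle\cdots\langle a_r\rangle$, where $a=a_1a_2\cdots a_r$ is the factorization of $a$ with $a_i\in P_i$, and each factor $\langle a_i\rangle$ lies inside $M$ because $a_i$ is a power of $a$. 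Thus every cyclic subgroup already has the stated product form, and only the maximality clause requires an argument.

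First I would record the uniqueness of this decomposition: since $G$ is the internal direct product of the $P_i$, the $p_i$-component of any element is determined by it, so for a cyclic subgroup $M$ with factorization $M=M_1\cdots M_r$ one has the intersection identity $M_i=M\cap P_i$. Indeed $M_i\leq M\cap P_i$ is immediate, while an element $g\in M\cap P_i$ written via its factorization in $M_1\cdots M_r$ must have all components outside the $i$-th slot trivial, so $g\in M_i$. This description is the key tool, because it makes the $p_i$-component monotone: if $M\leq N$ are cyclic subgroups of $G$, then $M_i=M\cap P_i\leq N\cap P_i=N_i$ for every $i$.

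Next I would prove the forward implication: if $M=M_1\cdots M_r$ is maximal cyclic in $G$, then each $M_i$ is maximal cyclic in $P_i$. If some $M_i$ were properly contained in a cyclic subgroup $N_i$ of $P_i$, then replacing the $i$-th factor yields $N:=M_1\cdots N_i\cdots M_r$, which is cyclic by fact (a) and properly contains $M$, contradicting the maximality of $M$.

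Finally, for the reverse implication, suppose each $M_i$ is maximal cyclic in $P_i$ and let $N$ be a cyclic subgroup of $G$ with $M\leq N$. Writing $N=N_1\cdots N_r$ via fact (b) with $N_i=N\cap P_i$, monotonicity gives $M_i\leq N_i$ for every $i$, and the maximality of each $M_i$ forces $M_i=N_i$; hence $M=N$, so $M$ is maximal cyclic in $G$. I expect the only point needing care to be the clean justification of the intersection identity $M_i=M\cap P_i$ and its monotone consequence, since everything else reduces to bookkeeping on top of the coprimality arguments already in place.
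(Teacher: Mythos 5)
Your proof is correct and takes essentially the same approach as the paper: the paper establishes exactly your facts (a) and (b) in the discussion preceding the lemma and then asserts that maximality transfers componentwise ``from the above discussion,'' which is precisely the transfer you carry out. The only difference is that you make explicit, via the intersection identity $M_i = M\cap P_i$ and its monotonicity, the details the paper leaves to the reader.
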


\begin{proposition}\label{size-compare}
Let $C\in\mathcal{M}(G)$ be of minimum possible order. Then $|\widetilde{M}|\geq |\widetilde{C}|$ for every $M\in\mathcal{M}(G)$.
\end{proposition}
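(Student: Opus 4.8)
The plan is to reduce everything to the identity $|\widetilde{M}| = |M| - \phi(|M|)$ and to show that the ratio $|\widetilde{M}|/|M|$ is the \emph{same} for every $M \in \mathcal{M}(G)$. Once this is established, the inequality $|\widetilde{M}| \geq |\widetilde{C}|$ follows at once from $|M| \geq |C|$, which holds because $C$ has minimum possible order.

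First I would invoke Lemma \ref{max-cyclic} to write an arbitrary $M \in \mathcal{M}(G)$ as $M = M_1 M_2 \cdots M_r$, where each $M_i$ is a maximal cyclic subgroup of $P_i$. Since each $P_i$ is a nontrivial $p_i$-group, its maximal cyclic subgroups are nontrivial (the trivial subgroup sits inside any subgroup of order $p_i$ and so is never maximal), hence $|M_i| = p_i^{m_i}$ with $m_i \geq 1$ for every $i$. Consequently $|M| = \prod_i p_i^{m_i}$ is divisible by each $p_i$. This is the one point that requires a genuine argument, and I regard it as the only real obstacle; everything afterwards is a direct computation.

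Next, using the multiplicativity of $\phi$ together with $\phi(p_i^{m_i}) = p_i^{m_i - 1}(p_i - 1)$, I would compute
$$\frac{\phi(|M|)}{|M|} = \prod_{i=1}^{r} \frac{p_i^{m_i - 1}(p_i - 1)}{p_i^{m_i}} = \prod_{i=1}^{r} \frac{p_i - 1}{p_i} = \frac{\phi(p_1 p_2 \cdots p_r)}{p_1 p_2 \cdots p_r}.$$
The decisive feature is that the exponents $m_i$ cancel, so this ratio depends only on the primes $p_1, \ldots, p_r$ and not on the particular $M$. Setting $c := 1 - \phi(p_1 \cdots p_r)/(p_1 \cdots p_r)$, which is positive and independent of $M$, I then obtain
$$|\widetilde{M}| = |M| - \phi(|M|) = c\,|M|.$$

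Finally, since $C$ has minimum order in $\mathcal{M}(G)$, we have $|C| \leq |M|$ for every $M \in \mathcal{M}(G)$; multiplying by the positive constant $c$ gives $|\widetilde{C}| = c|C| \leq c|M| = |\widetilde{M}|$, as required. The whole argument hinges on the observation that a maximal cyclic subgroup of a finite nilpotent group must meet every Sylow subgroup nontrivially, which forces the squarefree part of $|M|$ to be the fixed number $p_1 \cdots p_r$ for all $M$ and thereby pins down $|\widetilde{M}|$ as a fixed multiple of $|M|$.
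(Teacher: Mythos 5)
Your proof is correct, and it shares the paper's main ingredients --- Lemma \ref{max-cyclic} and the identity $|\widetilde{M}| = |M| - \phi(|M|)$ --- but it organizes the comparison in a genuinely cleaner way. The paper decomposes both $M = M_1\cdots M_r$ and $C = C_1\cdots C_r$, argues that minimality of $|C|$ forces each component $C_i$ to be a maximal cyclic subgroup of $P_i$ of minimum possible order (so $|C_i| \leq |M_i|$ for every $i$), and then compares factor by factor after writing $|\widetilde{M}| = p_1^{k_1-1}\cdots p_r^{k_r-1}\left[p_1\cdots p_r - \phi(p_1\cdots p_r)\right]$. Your version extracts from the same factorization the sharper packaging $|\widetilde{M}| = c\,|M|$ with $c = 1 - \phi(p_1\cdots p_r)/(p_1\cdots p_r)$ independent of $M$, so that only the global inequality $|C| \leq |M|$ --- immediate from the definition of $C$ --- is needed. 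This buys a small but real simplification: you never have to justify the componentwise minimality of the $C_i$, which in the paper is asserted in one line but really needs a short swap argument (replacing $C_i$ by a smaller maximal cyclic subgroup of $P_i$ would, by Lemma \ref{max-cyclic}, yield a member of $\mathcal{M}(G)$ of smaller order). Both arguments ultimately rest on the same key point, which you correctly isolate and prove: every $M_i$ is nontrivial, so the squarefree radical of $|M|$ equals $p_1\cdots p_r$ for all $M \in \mathcal{M}(G)$, making the ratio $\phi(|M|)/|M|$, and hence $|\widetilde{M}|/|M|$, constant across $\mathcal{M}(G)$.
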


\begin{proof}
Let $M\in\mathcal{M}(G)$. By Lemma \ref{max-cyclic}, $M=M_1M_2\cdots M_r$ and $C=C_1C_2\cdots C_r$ for some $M_i, C_i\in \mathcal{M}(P_i)$, $1\leq i\leq r$. Since $C\in\mathcal{M}(G)$ is of minimum order, $C_i$ must be a maximal cyclic subgroup of $P_i$ of minimum order and so $|C_i|\leq |M_i|$ for each $i$. Let $|M_i|=p_i^{k_i}$ and $|C_i|=p_i^{s_i}$. Then $1\leq s_{i}\leq k_{i}$ for $i\in\{1,2,\ldots, r\}$.
Since $|M|=p_{1}^{k_{1}}p_{2}^{k_{2}}\ldots p_r^{k_r}$ and $|C|=p_{1}^{s_{1}}p_{2}^{s_{2}}\ldots p_r^{s_r}$, we get
\begin{eqnarray*}
|\widetilde{M}| & = & p_{1}^{k_{1}}p_{2}^{k_{2}}\ldots p_r^{k_r} - \phi\left(p_{1}^{k_{1}}p_{2}^{k_{2}}\ldots p_r^{k_r}\right)\\
& = & p_{1}^{k_{1}-1}p_{2}^{k_{2}-1}\ldots p_r^{k_r-1}[p_{1}p_{2}\ldots p_r-\phi(p_{1}p_{2}\ldots p_r)]\\
& \geq & p_{1}^{s_{1}-1}p_{2}^{s_{2}-1}\ldots p_r^{s_r-1}[p_{1}p_{2}\ldots p_r - \phi(p_{1}p_{2}\ldots p_r)]\\
& = & p_{1}^{s_{1}}p_{2}^{s_{2}}\ldots p_r^{s_r} - \phi\left(p_{1}^{s_{1}}p_{2}^{s_{2}}\ldots p_r^{s_r}\right)=|\widetilde{C}|.
\end{eqnarray*}
In the above, the first and last equality holds since the number of non-generators in a cyclic group of order $m$ is $m-\phi(m)$.
\end{proof}

\begin{proposition}\label{existence-1}
Suppose that $r\geq 2$ and $P_j$ is cyclic for some $j\in\{1,2,\ldots ,r\}$. If $X$ is a cut-set of $\mathcal{P}(G)$ not containing $P_j$, then $X$ contains an element which generates a maximal cyclic subgroup of $G$.
\end{proposition}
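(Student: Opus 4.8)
The plan is to reduce the statement to Proposition~\ref{existence-0} by exhibiting a subset $D$ of $G$ that is common to all maximal cyclic subgroups but is not swallowed by $X$. The natural candidate is $D=P_j$ itself. First I would observe that, since $P_j$ is cyclic, its only maximal cyclic subgroup is $P_j$, i.e. $\mathcal{M}(P_j)=\{P_j\}$. By Lemma~\ref{max-cyclic}, every $M\in\mathcal{M}(G)$ has the form $M=M_1\cdots M_r$ with $M_i\in\mathcal{M}(P_i)$; in particular $M_j=P_j$, so $P_j\subseteq M$ for every $M\in\mathcal{M}(G)$. Thus $P_j$ is contained in every member of $\mathcal{M}(G)$, while by hypothesis $P_j\nsubseteq X$.

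Since $G$ is finite, every element of $G$ lies in a maximal cyclic subgroup, so the hypotheses of Proposition~\ref{existence-0} are met with this choice $D=P_j$. Applying that proposition, I obtain some $M\in\mathcal{M}(G)$ for which $\mathcal{P}(M\setminus X)$ is disconnected; in particular $|M\setminus X|\geq 2$.

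The remaining step, which is really the crux, is to convert this disconnectedness into the presence of a generator of $M$ inside $X$. Here I would exploit that $M$ is cyclic: any generator $g$ of $M$ satisfies $M=\langle g\rangle$, so every other element of $M$ is a power of $g$ and hence adjacent to $g$ in $\mathcal{P}(G)$. Consequently, if even a single generator of $M$ survived in $M\setminus X$, it would be adjacent there to all the (at least one) other vertices of $M\setminus X$, forcing $\mathcal{P}(M\setminus X)$ to be connected, a contradiction. Therefore every generator of $M$ must lie in $X$. Since $\phi(|M|)\geq 1$ and $M\in\mathcal{M}(G)$, the set $X$ contains an element generating the maximal cyclic subgroup $M$ of $G$, which is exactly what is claimed.

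The argument is short because the heavy lifting is outsourced to Proposition~\ref{existence-0}; the only genuine subtlety I foresee is the clean observation that a generator of a cyclic group dominates its power subgraph, which is what lets the disconnectedness of $\mathcal{P}(M\setminus X)$ propagate into $X$. I would also flag that $X$ is assumed only to be a cut-set and not a minimal one, so the equivalence in Proposition~\ref{gen-connected} is not directly applicable; the domination argument above serves as its replacement for this more general $X$.
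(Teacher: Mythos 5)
Your proposal is correct and takes essentially the same route as the paper: both use Lemma \ref{max-cyclic} to see that $P_j$ lies in every member of $\mathcal{M}(G)$, then apply Proposition \ref{existence-0} with $D=P_j$ to produce an $M\in\mathcal{M}(G)$ for which $\mathcal{P}(M\setminus X)$ is disconnected. The only difference is the final step: the paper simply cites Proposition \ref{gen-connected}, whose $(1)\Rightarrow(2)$ direction is exactly your domination argument, and your flag about minimality is apt --- that proposition is stated for minimal cut-sets while here $X$ is an arbitrary cut-set, so your inlined argument (which never uses minimality) is the cleaner way to finish and quietly repairs the paper's slightly loose citation.
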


\begin{proof}
Since $P_j$ is cyclic, Lemma \ref{max-cyclic} implies that every maximal cyclic subgroup of $G$ contains $P_j$. Taking $D=P_j$ in Proposition \ref{existence-0} and using the given hypothesis that $P_j$ is not contained in $X$, we get that there exists $M\in \mathcal{M}(G)$ such that $\mathcal{P}(M\setminus X)$ is disconnected. Then the result follows from Proposition \ref{gen-connected}.
\end{proof}

\begin{proposition}\label{contain}
Suppose that $r\geq 2$. Let $X$ be a cut-set of $\mathcal{P}(G)$ not containing any Sylow subgroup of $G$ and $(A,B)$ be a separation of $\mathcal{P}(G\setminus X)$. If $\mathcal{P}(M\setminus X)$ is connected for every $M\in\mathcal{M}(G)$, then the sets $P_i\setminus X$ and $P_j\setminus X$ are contained either in $A$ or in $B$, $1\leq i\neq j\leq r$.
\end{proposition}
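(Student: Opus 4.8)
The plan is to prove the slightly stronger assertion that all of the sets $P_1\setminus X,\ldots,P_r\setminus X$ lie in one common part of the separation $(A,B)$; the stated conclusion for a fixed pair $i\neq j$ is then immediate. First I would record two easy consequences of the hypotheses. Since $X$ contains no Sylow subgroup of $G$, each $P_i\setminus X$ is non-empty. Since every cut-set of $\mathcal{P}(G)$ contains the identity $1$, we have $1\in X$, so every element of each $P_i\setminus X$ is a non-identity element. Thus for every $i$ there is a non-identity element lying in $P_i\setminus X$.

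The core is a bridging argument between two distinct Sylow subgroups. Fix $i\neq j$ and pick $x\in P_i\setminus X$ and $y\in P_j\setminus X$. As $x$ and $y$ lie in distinct Sylow subgroups their orders are coprime, so by Lemma \ref{nilpotent-equi}(i) they commute and $\langle xy\rangle=\langle x\rangle\langle y\rangle$ is cyclic of order $o(x)o(y)$; in particular it contains both $x$ and $y$, since one recovers $x$ and $y$ as suitable powers of $xy$ exactly as in the proof of Lemma \ref{xypath}. Because $G$ is finite, $\langle xy\rangle$ is contained in some $M\in\mathcal{M}(G)$, and then $x,y\in M\setminus X$ (using $x,y\notin X$). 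By hypothesis $\mathcal{P}(M\setminus X)$ is connected, so there is a path from $x$ to $y$ in $\mathcal{P}(M\setminus X)$, hence in $\mathcal{P}(G\setminus X)$. Since $(A,B)$ is a separation there is no edge joining $A$ to $B$, so $x$ and $y$ must lie in the same part.

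To finish I would propagate this conclusion. The previous paragraph shows that every element of $P_i\setminus X$ lies in the same part as every element of $P_j\setminus X$; fixing one element $y_0\in P_j\setminus X$ forces all of $P_i\setminus X$ into the part containing $y_0$, and symmetrically all of $P_j\setminus X$ into that same part. Here the hypothesis $r\geq 2$ is what guarantees an auxiliary Sylow subgroup to bridge across: to see that a single $P_i\setminus X$ cannot be split, bridge any two of its elements to a common element of some $P_\ell\setminus X$ with $\ell\neq i$. Running the cross-bridges over all indices then pins the parts together, so $P_1\setminus X,\ldots,P_r\setminus X$ share one common part, and in particular $(P_i\setminus X)\cup(P_j\setminus X)$ is contained in $A$ or in $B$, as required.

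The only delicate point, and the place where the connectivity hypothesis is genuinely needed, is that one cannot simply invoke the length-two walk $x\sim xy\sim y$ from Lemma \ref{xypath}: the intermediate vertex $xy$ may itself lie in $X$. The connectivity of $\mathcal{P}(M\setminus X)$ is exactly what allows one to route from $x$ to $y$ while avoiding $X$. This is the main obstacle in the argument, and it is resolved precisely by the standing assumption that $\mathcal{P}(M\setminus X)$ is connected for every $M\in\mathcal{M}(G)$.
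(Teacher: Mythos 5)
Your proof is correct and follows essentially the same route as the paper's: both arguments place a point of $P_i\setminus X$ and a point of $P_j\setminus X$ inside a common maximal cyclic subgroup $M$ (using nilpotency and coprimality of orders, via the element $xy$), then use the connectivity of $\mathcal{P}(M\setminus X)$ together with the separation $(A,B)$ to force the two points into the same part. Your write-up merely makes explicit two points the paper leaves implicit --- the existence of such an $M$ and the final propagation over all indices --- so there is nothing substantive to add or correct.
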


\begin{proof}
Since $P_i\setminus X\neq \emptyset$ and $P_j\setminus X\neq \emptyset$, there exist $u_i\in P_i\setminus X$ and $u_j\in P_j\setminus X$. Without loss, we may assume that $u_i\in A$. We show that both $P_i\setminus X$ and $P_j\setminus X$ are contained in $A$.

Let $y\in P_j\setminus X$. Let $M$ be a maximal cyclic subgroup of $G$ containing both $u_i$ and $y$. Since $\mathcal{P}(M\setminus X)$ is connected by our assumption, there is a path between $u_i$ and $y$ in $\mathcal{P}(M\setminus X)$ and hence in $\mathcal{P}(G\setminus X)$. Since $u_i\in A$ and $(A,B)$ is a separation of $\mathcal{P}(G\setminus X)$, it follows that $y$ must be in $A$. So $P_j\setminus X$ is contained in $A$. In particular, taking $y=u_j$, we get $u_j\in A$. Then applying a similar argument as above, we get that $P_i\setminus X$ is contained in $A$.
\end{proof}

\begin{proposition}\label{all-group1}
Suppose that $G$ has at least two Sylow subgroups which are non-cyclic. Then $\mathcal{P}(G\setminus M)$ is connected for every $M\in\mathcal{M}(G)$. In particular, $G\setminus M$ and $M\setminus \widetilde{M}$ are the only two connected components of $\mathcal{P}(G\setminus \widetilde{M})$.
\end{proposition}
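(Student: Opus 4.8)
The plan is to reduce the ``in particular'' clause to the first assertion and then establish connectivity of $\mathcal{P}(G\setminus M)$ directly. For the reduction, I would invoke Proposition \ref{all-group} with $A=G\setminus M$ and $B=M\setminus\widetilde{M}$: it already gives that $(A,B)$ is a separation of $\mathcal{P}(G\setminus\widetilde{M})$, so there is no edge between $A$ and $B$. The set $B$ consists precisely of the generators of $M$, which form a clique and hence induce a connected subgraph. Thus, once $\mathcal{P}(A)=\mathcal{P}(G\setminus M)$ is shown to be connected, $A$ and $B$ are each connected with no edge between them, forcing them to be exactly the two connected components of $\mathcal{P}(G\setminus\widetilde{M})$.

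To prove $\mathcal{P}(G\setminus M)$ is connected, I would first record the decomposition from Lemma \ref{max-cyclic}: writing $G=P_1\cdots P_r$ and $M=M_1\cdots M_r$ with $M_i\in\mathcal{M}(P_i)$, an element $g=g_1\cdots g_r$ lies in $M$ if and only if $g_i\in M_i$ for every $i$; equivalently $g\in G\setminus M$ if and only if some component $g_i\notin M_i$, which can occur only at a non-cyclic $P_i$ (for cyclic $P_i$ one has $M_i=P_i$). The first step is a reduction to ``pure'' elements: for $g\in G\setminus M$ with a witness index $i$, raising $g$ to the power $\prod_{j\ne i}o(g_j)$ kills every Sylow part except the $i$-th, which shows $g_i\in\langle g\rangle$ and hence $g\sim g_i$; and since $g_i\notin M_i$, the pure element $g_i$ also lies in $G\setminus M$. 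Consequently every vertex of $G\setminus M$ is adjacent, inside $\mathcal{P}(G\setminus M)$, to a pure element of some non-cyclic $P_i\setminus M_i$, and it suffices to connect all such pure elements to one another.

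For the second step I would use Lemma \ref{xypath} together with the hypothesis that at least two Sylow subgroups, say $P_k$ and $P_\ell$, are non-cyclic, so that $P_k\setminus M_k$ and $P_\ell\setminus M_\ell$ are both nonempty. Any two pure elements $a\in P_i\setminus M_i$ and $b\in P_j\setminus M_j$ with $i\ne j$ have coprime prime-power orders, so $a\sim ab\sim b$; moreover $ab\notin M$ because its $i$-component $a\notin M_i$, so this path lies in $\mathcal{P}(G\setminus M)$. This links pure elements sitting in distinct non-cyclic Sylow factors. To link two pure elements $a,a'$ in the same factor $P_i\setminus M_i$, I would route them through a hub: pick any $b\in P_j\setminus M_j$ for a second non-cyclic $P_j$ with $j\ne i$ (available precisely because two Sylow subgroups are non-cyclic), and use $a\sim ab\sim b$ and $a'\sim a'b\sim b$. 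Combining the two cases shows the set of all pure elements is connected in $\mathcal{P}(G\setminus M)$, and with the first step this yields connectivity of $\mathcal{P}(G\setminus M)$.

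The main obstacle is the intra-factor case: two elements of a single non-cyclic Sylow subgroup need not be adjacent, nor connectable within that Sylow alone, and the identity cannot serve as a connector since $1\in M$. This is exactly where the hypothesis of two non-cyclic Sylow subgroups is indispensable, as the second non-cyclic factor supplies a coprime-order hub element lying in $G\setminus M$. Throughout, I would take care to verify that each auxiliary vertex (namely $g_i$, $ab$, $a'b$, and $b$) genuinely lies outside $M$, so that every edge used stays inside the induced subgraph $\mathcal{P}(G\setminus M)$.
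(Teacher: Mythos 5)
Your proof is correct and follows essentially the same route as the paper: both reduce the ``in particular'' clause to connectivity of $\mathcal{P}(G\setminus M)$ via Proposition \ref{all-group}, and both establish that connectivity using Lemma \ref{xypath} together with the fact that two non-cyclic Sylow factors supply elements outside $M$ of coprime orders to serve as connectors. The only difference is organizational: the paper splits $G=P_jS$ into two blocks and runs a case analysis producing explicit walks, whereas you first project each vertex to a ``pure'' component by powering and then route same-factor pairs through a hub in a second non-cyclic factor --- the same walks in a slightly cleaner packaging.
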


\begin{proof}
Set $A=G\setminus M$ and $B=M\setminus \widetilde{M}$. Then $(A,B)$ is a separation of $\mathcal{P}(G\setminus \widetilde{M})$ by Proposition \ref{all-group}. Clearly, $\mathcal{P}(B)$ is connected, in fact, a clique. We show that $\mathcal{P}(A)$ is connected.

Let the Sylow subgroups $P_{j}$ and $P_{k}$ be non-cyclic for some $j$ and $k$ with $1\leq j<k\leq r$. Take $S=P_1\cdots P_{j-1}P_{j+1}\cdots P_{k}\cdots P_{r}$. Then $P_{j}$ and $S$ are non-cyclic subgroups of $G$ and $G=P_{j}S$. So both the sets $\overline{P_j}:=P_j\setminus (M\cap P_j)$ and $\overline{S}:=S\setminus (M\cap S)$ are non-empty and contained in $A$.
Note that if $x\in \overline{P_j}$ and $y\in \overline{S}$, then $xy\notin M$. This can be seen as follows. Since $o(x)$ and $o(y)$ are relatively prime, we have $\left\langle (xy)^{o(y)}\right\rangle= \left\langle x^{o(y)}\right\rangle=\left\langle x\right\rangle$. If $xy\in M$, then $\left\langle (xy)^{o(y)}\right\rangle$ is a subgroup of $M$ and so also $\left\langle x\right\rangle$. This gives $x\in M$, contradicting that $x\in \overline{P_j}$.

Now, take two arbitrary distinct elements $a_js$ and $x_js'$ of $A$, where $a_j,x_j\in P_j$ and $s,s'\in S$. We show that there is a walk between $a_js$ and $x_js'$ in $\mathcal{P}(A)$. Since $a_js\notin M$, we have $a_j\in \overline{P_j}$ or $s\in \overline{S}$. Similarly, $x_j\in \overline{P_j}$ or $s' \in \overline{S}$. We shall use Lemma \ref{xypath} frequently for adjacencies in the following walks. If both $s,s'$ are not in $\overline{S}$, then $a_j,x_j$ are in $\overline{P_j}$. Taking any $\overline{s}\in \overline{S}$, we get a walk
$$a_js\sim a_j\sim a_j \overline{s}\sim \overline{s}\sim x_j\overline{s}\sim x_j\sim x_js'$$
between $a_js$ and $x_js'$ in $\mathcal{P}(A)$. If both $a_j,x_j$ are not in $\overline{P_j}$, then both $s,s'$ are in $\overline{S}$. Take any $u_j\in \overline{P_j}$ to produce a walk
$$a_js\sim s\sim u_j s\sim u_j\sim u_js'\sim s'\sim x_js'$$
between $a_js$ and $x_js'$ in $\mathcal{P}(A)$. For each of the other possible cases, one of the following
$$a_js\sim s\sim x_j s\sim x_j\sim x_js'\text{\;\;\;\; or \;\;\;\;}a_js\sim a_j\sim a_j s'\sim s'\sim x_js'$$
will define a walk between $a_js$ and $x_js'$ in $\mathcal{P}(A)$. This completes the proof.
\end{proof}

\begin{proposition}\label{M-bar-min}
If $G$ has at least two Sylow subgroups which are non-cyclic, then $\overline{M}$ is a minimal cut-set of $\mathcal{P}(G)$ for every $M\in\mathcal{M}(G)$.
\end{proposition}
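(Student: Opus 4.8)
The plan is to fix an arbitrary $x\in\overline{M}$ and show that $\overline{M}\setminus\{x\}$ fails to be a cut-set, that is, that $\mathcal{P}(G\setminus(\overline{M}\setminus\{x\}))$ is connected; since $x$ is arbitrary, this gives minimality. Because $\overline{M}\subseteq\widetilde{M}\subsetneq M$, I would split the vertex set of $\mathcal{P}(G\setminus\overline{M})$ as the disjoint union $(G\setminus M)\cup(M\setminus\overline{M})$, and the first step is to verify that these two pieces are exactly the connected components of $\mathcal{P}(G\setminus\overline{M})$.

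For the separation I would argue as in the proof of Proposition \ref{M-bar}: if $z\in M\setminus\overline{M}$ were adjacent to some $w\in G\setminus M$, then either $z\in\langle w\rangle$, forcing $z\in M\cap\langle w\rangle\subseteq\overline{M}$, or $w\in\langle z\rangle\subseteq M$; both are contradictions. So no edge joins $G\setminus M$ to $M\setminus\overline{M}$. Next, $M\setminus\overline{M}$ is connected because it contains all generators of $M$, each of which is adjacent to every other element of $M$ and hence to every element of $M\setminus\overline{M}$. Finally, $G\setminus M$ is connected by Proposition \ref{all-group1}; this is precisely the step that consumes the hypothesis that $G$ has at least two non-cyclic Sylow subgroups. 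Thus $A:=G\setminus M$ and $B:=M\setminus\overline{M}$ are the two connected components of $\mathcal{P}(G\setminus\overline{M})$, both nonempty since $G$ is non-cyclic (so $M\subsetneq G$) and $M$ has generators.

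To finish, I would show that restoring a single $x\in\overline{M}$ bridges $A$ and $B$. On the one hand $x\in M=\langle g\rangle$ is a non-generator, so $x\neq g$ and $x\sim g$ with $g\in B$. On the other hand, by the very definition of $\overline{M}$ there is some $y\in G\setminus M$ with $x\in M\cap\langle y\rangle$; then $x\neq y$ and $x\sim y$ with $y\in A$. Both $g$ and $y$ lie outside $\overline{M}$, so they survive the removal of $\overline{M}\setminus\{x\}$, and therefore $x$ is adjacent to both components. Hence $\mathcal{P}(A\cup B\cup\{x\})=\mathcal{P}(G\setminus(\overline{M}\setminus\{x\}))$ is connected, as required.

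The main obstacle is really the first step, namely pinning down that $\mathcal{P}(G\setminus\overline{M})$ has \emph{exactly} two components, and in particular the connectedness of $G\setminus M$, which is exactly where Proposition \ref{all-group1} and the two-non-cyclic-Sylow hypothesis enter. Once the two-component picture is in place, the bridging step is immediate from the definition of $\overline{M}$ and costs nothing extra.
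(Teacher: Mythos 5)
Your proof is correct and follows essentially the same route as the paper: the same decomposition of $G\setminus\overline{M}$ into $G\setminus M$ (connected by Proposition \ref{all-group1}, where the two-non-cyclic-Sylow hypothesis is spent) and $M\setminus\overline{M}$ (connected via the generators of $M$), followed by the same bridging of a restored $x\in\overline{M}$ to a generator of $M$ on one side and, by the definition of $\overline{M}$, to some $y\in G\setminus M$ on the other. The only cosmetic difference is that you verify the absence of edges between the two parts directly rather than quoting Proposition \ref{M-bar} for the cut-set property and deducing the two-component structure from connectedness of the pieces.
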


\begin{proof}
By Proposition \ref{M-bar}, $\overline{M}$ is a cut-set of $\mathcal{P}(G)$. Let $C=G\setminus M$ and $D=M\setminus\overline{M}$. Then $\mathcal{P}(D)$ is connected as $D$ contains the generators of $M$. Since $G$ has at least two non-cyclic Sylow subgroups, $\mathcal{P}(C)$ is connected by Proposition \ref{all-group1}. It follows that $(C,D)$ is the only separation of $\mathcal{P}(G\setminus\overline{M})$.

In order to prove the minimality of $\overline{M}$, we need to show that $\mathcal{P}(G\setminus (\overline{M}-\{\alpha\}))$ is connected for every $\alpha\in \overline{M}$. Clearly, $\alpha$ is adjacent with every element of $D$ which are generators of $M$. By the definition of $\overline{M}$, there exists $\beta\in C=G\setminus M$ such that $\alpha\in\langle\beta\rangle$, that is, $\beta\sim\alpha$. Since both $\mathcal{P}(C)$ and $\mathcal{P}(D)$ are connected, it follows that $\mathcal{P}(G\setminus (\overline{M}-\{\alpha\}))$ is connected.
\end{proof}

\begin{proposition}\label{nilpotent-mincut}
Suppose that $r\geq 2$ and $P_k$ is neither cyclic nor a generalized quaternion $2$-group for some $k\in\{1,2,\ldots ,r\}$. Then $Q=P_{1}\cdots P_{k-1}P_{k+1}\cdots P_r$ is a  minimal cut-set of $\mathcal{P}(G)$.
\end{proposition}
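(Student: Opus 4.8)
The plan is to establish the two requirements separately: first that $Q$ is a cut-set of $\mathcal{P}(G)$, and then that deleting any single element from $Q$ destroys this property. Throughout I would use that $G$ is the internal direct product $P_k\times Q$ (Lemma \ref{nilpotent-equi}(iii)), so that $Q$ is a subgroup of order $n/p_k^{n_k}$ and every element of $G$ is written uniquely as $xq$ with $x\in P_k$ and $q\in Q$, where $x$ and $q$ commute and have coprime orders. The elements of $G\setminus Q$ are precisely those $xq$ with $x\neq 1$.

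For the cut-set part, the key idea is to attach to each $g=xq\in G\setminus Q$ an invariant: since $\langle x\rangle$ is a non-trivial cyclic $p_k$-group, it contains a unique subgroup of order $p_k$, which I call $\sigma(g)$. I would show that $\sigma$ is constant on edges of $\mathcal{P}(G\setminus Q)$: if $xq\sim x'q'$ there, then one is a power of the other, so projecting to $P_k$ shows that one of $\langle x\rangle,\langle x'\rangle$ contains the other; as both are non-trivial cyclic $p_k$-groups they share the same unique minimal subgroup, giving $\sigma(xq)=\sigma(x'q')$. Hence $\sigma$ is constant on each connected component of $\mathcal{P}(G\setminus Q)$. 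Because $P_k$ is neither cyclic nor a generalized quaternion $2$-group, Lemma \ref{p-group} guarantees that $P_k$ has at least two distinct subgroups $\langle z_1\rangle,\langle z_2\rangle$ of order $p_k$; then $z_1,z_2\in G\setminus Q$ satisfy $\sigma(z_1)\neq\sigma(z_2)$ and therefore lie in different components. Thus $\mathcal{P}(G\setminus Q)$ is disconnected and $Q$ is a cut-set.

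For minimality I must check that $Q\setminus\{q_0\}$ is not a cut-set for every $q_0\in Q$, equivalently that $\mathcal{P}\big((G\setminus Q)\cup\{q_0\}\big)$ is connected. I would do this by exhibiting, for each $xq$ with $x\neq 1$, an explicit path to the retained vertex $q_0$, namely $xq\sim x\sim xq_0\sim q_0$. Each adjacency follows from the coprimality of $o(x)$ with the relevant $Q$-orders together with the Chinese Remainder Theorem: $x\in\langle xq\rangle$ and $x\in\langle xq_0\rangle$ (a suitable power kills the $Q$-part and fixes $x$), while $q_0\in\langle xq_0\rangle$ (a suitable power kills the $P_k$-part and fixes $q_0$). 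All the intermediate vertices $x$ and $xq_0$ have non-trivial $P_k$-component and so lie in $G\setminus Q$, and the only vertex of $Q$ used is $q_0$ itself, so the walk stays inside $(G\setminus Q)\cup\{q_0\}$. Since every vertex is joined to $q_0$, the graph is connected, as required.

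I expect the conceptual heart to be the cut-set part: one has to identify the right edge-invariant $\sigma$ and recognise that it is precisely the hypothesis that $P_k$ is neither cyclic nor generalized quaternion (via Lemma \ref{p-group}) which forces $\sigma$ to take at least two values and hence produces a genuine separation. By contrast, the minimality argument is essentially a routine path construction; the only point needing care is to verify that every intermediate vertex avoids the deleted set $Q\setminus\{q_0\}$, which the chosen path arranges automatically. A secondary point worth noting is that no hypothesis on the other Sylow subgroups is required: the decomposition $G=P_k\times Q$ absorbs them all into $Q$, so the same argument applies whether or not the remaining Sylow subgroups happen to be cyclic.
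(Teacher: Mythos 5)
Your proposal is correct, and its skeleton matches the paper's: both proofs use Lemma~\ref{p-group} to produce two distinct subgroups of order $p_k$ in $P_k$, both hinge on the uniqueness of the order-$p_k$ subgroup of a cyclic group, and your minimality walk $xq \sim x \sim xq_0 \sim q_0$ is exactly the coprimality walk (Lemma~\ref{xypath}) that the paper uses, merely routed to $q_0$ instead of between two arbitrary vertices through $w$. The genuine difference is in how the cut-set half is organized. The paper argues by contradiction: it assumes a shortest path in $\mathcal{P}(G\setminus Q)$ between the two elements of order $p_k$ and analyzes its first three vertices, which requires some case checking ($z_1\in\langle x\rangle$ versus $x\in\langle z_1\rangle$, then $z_1\in\langle z_2\rangle$ versus $z_2\in\langle z_1\rangle$). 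You instead define a global edge-invariant $\sigma$ (the unique order-$p_k$ subgroup of the $P_k$-component) and check directly that every edge of $\mathcal{P}(G\setminus Q)$ preserves it, using the projection $G=P_k\times Q \to P_k$; disconnection is then immediate since $\sigma(z_1)\neq\sigma(z_2)$. Your version is cleaner: it eliminates the shortest-path machinery, makes explicit that the direct-product projection is what drives the argument, and in fact proves slightly more, namely that the number of components of $\mathcal{P}(G\setminus Q)$ is at least the number of subgroups of order $p_k$ in $P_k$. The only point to tidy in a final write-up is the degenerate vertices in your walk (when $q=1$ or $q_0=1$ consecutive vertices coincide or one adjacency is to the identity); these cases are trivial, and the paper's own walk glosses over them in the same way.
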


\begin{proof}
By Lemma \ref{p-group}, $P_k$ has at least two distinct subgroups each of prime order $p_{k}$. Let $x,y\in P_{k}$ be such that $o(x)=p_{k}=o(y)$ and $\langle x \rangle \neq \langle y \rangle$. Then $x,y\in G\setminus Q$ and they are non-adjacent in $\mathcal{P}(G)$.

We first prove that $Q$ is a cut-set of $\mathcal{P}(G)$, by showing that there is no path between $x$ and $y$ in $\mathcal{P}(G\setminus Q)$. On the contrary, suppose that there is path between $x$ and $y$ in $\mathcal{P}(G\setminus Q)$. Let
$$x \sim z_{1} \sim z_{2} \sim \cdots \sim z_{m-1}\sim z_m=y$$
be a shortest path of length $m$ between $x$ and $y$ in $\mathcal{P}(G\setminus Q)$. Clearly, $m\geq 2$ as $x$ and $y$ are not adjacent in $\mathcal{P}(G)$.
Consider the first three elements $x,z_1,z_2$ in the above path. All of them are outside $Q$ and $x$ is not adjacent with $z_2$. If $z_1\in\langle x\rangle$, then $\langle x\rangle=\langle z_1\rangle$ as $o(x)=p_k$ is a prime. Since $z_1\sim z_2$, it would then follow that $x\sim z_2$, a contradiction. So $x\in \langle z_1\rangle$. If $z_1\in \langle z_2\rangle$, then $x\in \langle z_2\rangle$ and so $x\sim z_2$, again a contradiction. Hence $z_2\in \langle z_1\rangle$.

If $p_k | o(z_2)$, then $\langle z_{2} \rangle$ has a unique subgroup $H$ of order $p_{k}$. Since $\langle z_{2} \rangle \leq \langle z_{1} \rangle$, $H$ is a subgroup of $\langle z_{1} \rangle$. Since $\langle z_{1} \rangle$ has a unique subgroup of order $p_k$ and $\langle x\rangle$ is already such a subgroup of $\langle z_{1} \rangle$, we must have $\langle x \rangle=H$. Then it follows that $x\in \langle z_2 \rangle$ and so $x\sim z_2$, a contradiction. So $p_k\nmid o(z_2)$ and hence $z_2\in Q$, a final contradiction. Thus $\mathcal{P}(G\setminus Q)$ is disconnected.

We now prove the minimality of the cut-set $Q$. Let $w\in Q$ be arbitrary and set $\widehat{Q}=Q\setminus\{w\}$. We show that $\mathcal{P}(G\setminus \widehat{Q})$ is connected. This is clear if $w=1$. So assume that $w\neq 1$. Let $\alpha,\beta\in G\setminus \widehat{Q}$ with $\alpha\neq \beta$. As $G=P_kQ,$ we can write $\alpha=u_ku$ and $\beta=v_kv$ for some $u_k,v_k\in P_k$ and $u,v\in Q$. Then, by Lemma \ref{xypath},
$$\alpha=u_ku\sim u_k\sim u_k w\sim w\sim v_k w\sim v_k\sim v_kv=\beta$$
is a walk between $\alpha$ and $\beta$ in $\mathcal{P}(G\setminus \widehat{Q})$. Since $\alpha,\beta$ are arbitrary elements of $G\setminus \widehat{Q}$, it follows that $\mathcal{P}(G\setminus \widehat{Q})$ is connected.
\end{proof}

\subsection{Finite non-cyclic abelian groups}

We need the following elementary result.

\begin{lemma}\label{equiva}
Let $p,q$ be distinct prime numbers and $r,m$ be positive integers. Then there exists an integer $l$ such that $pl\equiv m\mod q^r$.
\end{lemma}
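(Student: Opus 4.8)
The plan is to reduce the statement to the invertibility of $p$ modulo $q^r$. Since $p$ and $q$ are distinct primes, $p$ does not divide $q$, and hence $p$ does not divide $q^r$ for any $r\geq 1$; equivalently $\gcd(p,q^r)=1$. The congruence $pl\equiv m\pmod{q^r}$ is a linear congruence in the unknown $l$ with leading coefficient $p$ coprime to the modulus $q^r$, and such a congruence is always solvable. So the whole argument comes down to producing a multiplicative inverse of $p$ modulo $q^r$.

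Concretely, first I would apply B\'ezout's identity to the coprime pair $(p,q^r)$: there exist integers $u,v$ with $pu+q^r v=1$. Reducing modulo $q^r$ gives $pu\equiv 1\pmod{q^r}$, so $u$ plays the role of $p^{-1}$ in $\mathbb{Z}/q^r\mathbb{Z}$. Then I would simply set $l=um$. Multiplying the congruence $pu\equiv 1\pmod{q^r}$ by $m$ yields $p(um)\equiv m\pmod{q^r}$, that is, $pl\equiv m\pmod{q^r}$, which is exactly the desired conclusion. This exhibits an explicit $l$ and finishes the proof.

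There is essentially no obstacle here: the result is a standard consequence of the fact that a residue coprime to the modulus is a unit, and the only input is $\gcd(p,q^r)=1$, which is immediate from $p\neq q$ being primes. If one prefers to avoid invoking B\'ezout explicitly, an equivalent route is to note that multiplication by $p$ is an injective (hence bijective) self-map of the finite set $\mathbb{Z}/q^r\mathbb{Z}$ because $p$ is a unit, so the value $m\bmod q^r$ is attained; either formulation gives the claim in one line. I would present the B\'ezout version since it directly yields the integer $l$ that the statement asks for.
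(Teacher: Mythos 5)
Your proof is correct and follows essentially the same route as the paper: both arguments invoke B\'ezout's identity for the coprime pair $(p,q^r)$ to get $ps+q^rt=1$, multiply through by $m$, and take $l=sm$ (your $l=um$). The paper simply skips the intermediate remark that $s$ is an inverse of $p$ modulo $q^r$, but the computation is identical.
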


\begin{proof}
Since $p$ and $q^r$ are relatively prime, there exist integers $s$ and $t$ such that $ps+q^r t=1$. Then $psm+q^r tm=m$. Take $l=sm$. Then $pl-m=q^r (-tm)$ and so $pl\equiv m\mod q^r$.
\end{proof}

\begin{proposition}\label{nongen-adjacency}
Let $G$ be a finite non-cyclic abelian group and $M$ be a maximal cyclic subgroup of $G$. Then the following two statements are equivalent:
\begin{enumerate}
\item[(1)] For every $\alpha\in \widetilde{M}$, there exists  $\beta\in G\setminus M$ such that $\alpha\in\langle\beta\rangle$.
\item[(2)] Every Sylow subgroup of $G$ is non-cyclic.
\end{enumerate}
\end{proposition}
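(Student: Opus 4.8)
The plan is to recast both conditions in terms of the set $\overline{M}$. Since "$\alpha\in\langle\beta\rangle$ for some $\beta\in G\setminus M$'' is precisely the condition $\alpha\in M\cap\langle\beta\rangle\subseteq\overline{M}$, statement (1) asserts $\widetilde{M}\subseteq\overline{M}$; as $\overline{M}\subseteq\widetilde{M}$ always holds, (1) is equivalent to $\overline{M}=\widetilde{M}$. I write $G=P_1\cdots P_r$ and, by Lemma~\ref{max-cyclic}, $M=M_1\cdots M_r$ with $M_i=\langle x_i\rangle$ a maximal cyclic subgroup of $P_i$. The computation preceding Lemma~\ref{max-cyclic} shows that for any $\beta=\beta_1\cdots\beta_r$ with $\beta_i\in P_i$ one has $\langle\beta\rangle=\langle\beta_1\rangle\cdots\langle\beta_r\rangle$. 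Hence both membership and non-membership decouple across the primes: writing $\alpha=\alpha_1\cdots\alpha_r$, we have $\alpha\in\langle\beta\rangle$ if and only if $\alpha_i\in\langle\beta_i\rangle$ for every $i$, while $\beta\notin M$ if and only if $\beta_i\notin M_i$ for at least one $i$. I will use this componentwise reduction throughout.

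For $(2)\Rightarrow(1)$, take any $\alpha\in\widetilde{M}$ and choose an index $i_0$ with $\alpha_{i_0}$ a non-generator of $M_{i_0}$ (one exists because $\alpha$ is a non-generator of $M$). Since $P_{i_0}$ is non-cyclic and abelian, Lemma~\ref{p-group} gives at least two subgroups of order $p_{i_0}$, so I can pick an element $w$ of order $p_{i_0}$ lying in $P_{i_0}\setminus M_{i_0}$ (an order-$p_{i_0}$ element outside the unique such subgroup of the cyclic $M_{i_0}$). Define $\beta$ to have $i_0$-th component $x_{i_0}w$ and $j$-th component $x_j$ for $j\neq i_0$. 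Then $\beta\notin M$ because its $i_0$-th component is outside $M_{i_0}$. For $j\neq i_0$ we have $\alpha_j\in M_j=\langle x_j\rangle$, and at $i_0$ the relation $(x_{i_0}w)^{p_{i_0}}=x_{i_0}^{\,p_{i_0}}$ shows $\langle x_{i_0}^{\,p_{i_0}}\rangle\subseteq\langle x_{i_0}w\rangle$; since $\langle x_{i_0}^{\,p_{i_0}}\rangle$ is exactly the set of non-generators of $M_{i_0}$, it contains $\alpha_{i_0}$. By the componentwise criterion, $\alpha\in\langle\beta\rangle$ with $\beta\in G\setminus M$, which is (1).

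For $(1)\Rightarrow(2)$ I argue by contraposition: assume some $P_{i_0}$ is cyclic, so $M_{i_0}=P_{i_0}$, and exhibit a non-generator of $M$ outside $\overline{M}$. Take $\alpha$ with $\alpha_{i_0}=1$ and $\alpha_j=x_j$ for $j\neq i_0$; its order omits the $p_{i_0}$-part of $|M|$, so $\alpha\in\widetilde{M}$. Now suppose $\alpha\in\langle\beta\rangle$ for some $\beta=\beta_1\cdots\beta_r$. At $i_0$ we automatically have $\beta_{i_0}\in P_{i_0}=M_{i_0}$. For each $j\neq i_0$, the condition $\alpha_j=x_j\in\langle\beta_j\rangle$ forces $M_j=\langle x_j\rangle\subseteq\langle\beta_j\rangle$; since $M_j$ is a maximal cyclic subgroup of $P_j$ and $\langle\beta_j\rangle$ is a cyclic subgroup containing it, maximality gives $\langle\beta_j\rangle=M_j$, whence $\beta_j\in M_j$. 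Thus $\beta_i\in M_i$ for every $i$, i.e.\ $\beta\in M$, so no $\beta\in G\setminus M$ satisfies $\alpha\in\langle\beta\rangle$. Hence $\alpha\in\widetilde{M}\setminus\overline{M}$ and (1) fails.

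The delicate point is the choice of witness in $(1)\Rightarrow(2)$: the tempting choice of an element \emph{generating} the cyclic component $M_{i_0}$ does not work, because such an element still lies in $\overline{M}$ (one can absorb the required power at a different, non-cyclic coordinate). The correct witness keeps the cyclic coordinate trivial and forces a generator at every other coordinate, so that maximality of each $M_j$ traps $\beta$ inside $M$. In the reverse direction the only essential use of the hypothesis that all Sylow subgroups are non-cyclic is the production of the order-$p_{i_0}$ element $w$ outside $M_{i_0}$, which is guaranteed by Lemma~\ref{p-group}; everything else reduces to the routine componentwise computations above.
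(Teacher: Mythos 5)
Your proof is correct, and while the $(1)\Rightarrow(2)$ direction is essentially the paper's argument recast in contrapositive form (the paper's witness puts the identity at \emph{all} cyclic coordinates and generators at the non-cyclic ones, yours puts the identity at a single cyclic coordinate $i_0$ and generators everywhere else; in both cases maximality of each $M_j$ in $P_j$ traps any $\beta$ with $\alpha\in\langle\beta\rangle$ inside $M$), your $(2)\Rightarrow(1)$ direction takes a genuinely different route. The paper writes $\alpha=w_1^{m_1}w_2^{m_2}\ldots w_r^{m_r}$, splits into cases (some $m_j=0$ versus all $m_i$ nonzero with, say, $p_1\mid m_1$), and invokes the congruence Lemma~\ref{equiva} to solve $p_1k_j\equiv m_j\bmod p_j^{r_j}$, thereby producing a $\beta\notin M$ with $\beta^{p_1}=\alpha$ exactly. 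You avoid both the case analysis and Lemma~\ref{equiva}: once a coordinate $i_0$ is fixed where $\alpha_{i_0}$ fails to generate $M_{i_0}$, your witness $\beta=x_{i_0}w\prod_{j\neq i_0}x_j$ depends only on $i_0$ and not on $\alpha$, since $\langle\beta\rangle=\langle x_{i_0}w\rangle\prod_{j\neq i_0}M_j\supseteq\langle x_{i_0}^{\,p_{i_0}}\rangle\prod_{j\neq i_0}M_j$ and $\langle x_{i_0}^{\,p_{i_0}}\rangle$ is precisely the set of non-generators of the cyclic $p$-group $M_{i_0}$ (this also absorbs the paper's separate treatment of $\alpha=1$). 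Your argument therefore proves slightly more: the $r$ elements $\beta^{(1)},\ldots,\beta^{(r)}$, one per coordinate, already satisfy $\widetilde{M}\subseteq\bigcup_{i}\langle\beta^{(i)}\rangle$. What the paper's computation buys in exchange is an explicit $p_1$-th root of $\alpha$ lying outside $M$, but nothing downstream (Corollary~\ref{coro-non-adj}, Propositions~\ref{M-M-1}, \ref{M-M-2}, \ref{2-gen-discon}) uses that extra precision, so your route is a clean simplification; both proofs use Lemma~\ref{p-group} in exactly the same way, to produce the order-$p_{i_0}$ element $w$ outside $M_{i_0}$, and both rest on the componentwise decomposition established before Lemma~\ref{max-cyclic}.
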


\begin{proof}
Since $G$ is non-cyclic, $G\setminus M$ is non-empty and at least one Sylow subgroup is non-cyclic. By Lemma \ref{max-cyclic}, $M=M_1M_2\cdots M_r$ for some $M_i\in\mathcal{M}(P_i)$, $1\leq i\leq r$. Let $M_i=\langle w_i\rangle$ for some $w_i\in P_i$ and $|M_i|=o(w_i)=p_i^{r_i}$ for some $1\leq r_i\leq n_i$. Note that any generator of $M$ is of the form $w_1^{l_1}w_2^{l_2} \ldots w_r^{l_r}$, where $l_i$ and $p_i^{r_i}$ are relatively prime.

$(1) \Rightarrow(2)$: Without loss, we may assume that $P_1,\ldots,P_k$ are non-cyclic and $P_{k+1},\ldots,P_r$ are cyclic for some $k\in\{1,2,\ldots, r\}$. We show that $k=r$. On the contrary, suppose that $k< r$ (so $r\geq 2$). Then $x=w_1w_2\cdots w_k$ is a non-generator of $M$. By (1), there exists $y\in G\setminus M$ such that $y^t=x$ for some integer $t$. Since $G=P_1P_2\cdots P_r$, we can write $y=y_1y_2\cdots y_r$ for some $y_i\in P_i$, $1\leq i\leq r$. Then $y_1^t y_2^t\cdots y_k^t y_{k+1}^t\cdots y_r^t =y^t=x=w_1w_2\cdots w_k$. Since $G=P_1P_2\cdots P_r$ is a direct product, it follows that $y_1^t=w_1$, $y_2^t=w_2,\cdots ,y_k^t=w_k$ and $y_i^t=1$ for $k+1\leq i\leq r$. Thus $w_j\in\langle y_j\rangle$ for $1\leq j\leq k$. Since $M_j=\langle w_j\rangle$ is a maximal cyclic subgroup of $P_j$, we must have $M_j=\langle w_j\rangle =\langle y_j\rangle$ and so $y_j\in M_j$ for $1\leq j\leq k$. Now, for $k+1\leq i\leq r$, $P_i$ is cyclic implies $M_i=P_i$ and so $y_i\in M_i$. Thus $y=y_1y_2\cdots y_r\in M_1M_2\cdots M_r=M$, contradicting that $y\in G\setminus M$. So $k=r$ and hence each Sylow subgroup of $G$ is non-cyclic.

$(2) \Rightarrow(1)$: Since $P_i$ is not cyclic, the set $\overline{P_i}=P_i\setminus M_i$ is non-empty, and by Lemma \ref{p-group}, there exists a subgroup of $P_i$ of order $p_i$ which is different from the unique subgroup of order $p_i$ contained in $M_i$. So there exists $u_i\in \overline{P_i}$ of order $p_i$. Since $M_i=M\cap P_i$, indeed $u_i\in G\setminus M$ for each $i\in\{1,2,\ldots,r\}$.

Let $\alpha\in\widetilde{M}$. If $\alpha=1$, then any element of $G\setminus M$ can be taken as $\beta$. Assume that $\alpha\neq 1$. Write $\alpha=w_1^{m_1}w_2^{m_2}\ldots w_r^{m_r}$ for some integers $m_1,m_2, \ldots, m_r$ with $0\leq m_i<p_i^{r_i}$. Since $\alpha\neq 1$, at least one $m_i$ is non-zero. Suppose that $m_j= 0$ for some $j$. Then $p_j$ and $o(\alpha)$ are relatively prime. So $\left\langle \alpha^{p_j} \right\rangle=\langle \alpha \rangle$ and hence $\alpha^{p_j t}=\alpha$ for some integer $t$. Take $\beta=u_j\alpha$. Then $\beta\in G\setminus M$ as $u_j\notin M$ and $\beta^{p_j t}=(u_j\alpha)^{p_j t}=\alpha^{p_j t}=\alpha$. So assume that all $m_i$'s are non-zero.

Since $\alpha$ is not a generator of $M$, we have $p_i\mid m_i$ for at least one $i$. Without loss of generality, suppose that $p_1\mid m_1$.
By Lemma \ref{equiva}, there exist integers $k_2,\ldots, k_r$ such that $p_1 k_j\equiv m_j\mod p_j^{r_j}$ for $j=2,\ldots, r$.
Now take $$\beta=u_1 w_1^{\frac{m_1}{p_1}}w_2^{k_2}\ldots w_r^{k_r}.$$
Then $\beta\notin M$ as $u_1\notin M$, that is, $\beta\in G\setminus M$. Calculating $\beta^{p_1}$, we get
$$\beta^{p_1}=\left(u_1 w_1^{\frac{m_1}{p_1}}w_2^{k_2}\ldots w_r^{k_r}\right)^{p_1} = w_1^{m_1}w_2^{p_1 k_2}\ldots w_r^{p_1k_r}=w_1^{m_1}w_2^{m_2}\ldots w_r^{m_r}=\alpha.$$
Here the second equality holds as $o(u_1)=p_1$, and the third equality holds using the facts that $o(w_j)=p_j^{r_j}$ and $p_1 k_j\equiv m_j\mod p_j^{r_j}$ for $j=2,\ldots,r$. Thus $\alpha\in\langle\beta\rangle$.
\end{proof}

As a consequence of Proposition \ref{nongen-adjacency}, we have the following.

\begin{corollary}\label{coro-non-adj}
Let $G$ be a finite non-cyclic abelian group in which every Sylow subgroup is non-cyclic. For $\alpha\in G$, if $\langle \alpha\rangle\notin\mathcal{M}(G)$, then $\alpha$ is contained in at least two distinct members of $\mathcal{M}(G)$.
\end{corollary}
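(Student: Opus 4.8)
The plan is to reduce the statement to Proposition \ref{nongen-adjacency} together with the elementary fact that, in a finite group, every element lies in some maximal cyclic subgroup.

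First I would observe that, since $G$ is finite, $\alpha$ is contained in at least one maximal cyclic subgroup $M\in\mathcal{M}(G)$. Because $\langle\alpha\rangle\notin\mathcal{M}(G)$ while $M$ is maximal cyclic, the inclusion $\langle\alpha\rangle\subseteq M$ must be proper; hence $\alpha$ is a non-generator of $M$, that is, $\alpha\in\widetilde{M}$. (The case $\alpha=1$ is subsumed here, since $\{1\}=\langle 1\rangle$ is never a maximal cyclic subgroup of the non-cyclic group $G$.)

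Next I would invoke Proposition \ref{nongen-adjacency}. The hypothesis that every Sylow subgroup of $G$ is non-cyclic is precisely condition $(2)$ of that proposition, so condition $(1)$ holds: for the non-generator $\alpha\in\widetilde{M}$ there exists $\beta\in G\setminus M$ with $\alpha\in\langle\beta\rangle$. Since $G$ is finite, $\beta$ itself lies in some maximal cyclic subgroup $M'\in\mathcal{M}(G)$, and then $\alpha\in\langle\beta\rangle\subseteq M'$, so $\alpha\in M'$.

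Finally I would check that $M$ and $M'$ are genuinely distinct, which is where the choice $\beta\in G\setminus M$ pays off: since $\beta\in M'$ but $\beta\notin M$, we have $M\neq M'$. Thus $\alpha$ lies in the two distinct members $M,M'$ of $\mathcal{M}(G)$, as required. There is essentially no combinatorial obstacle here, as the real content has been packaged into Proposition \ref{nongen-adjacency}; the only point requiring care is the deduction that $\alpha\in\widetilde{M}$ rather than $\alpha$ being a generator of $M$, which uses precisely the hypothesis $\langle\alpha\rangle\notin\mathcal{M}(G)$ to force the inclusion $\langle\alpha\rangle\subseteq M$ to be proper.
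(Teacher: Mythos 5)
Your proof is correct and is exactly the argument the paper intends: the corollary is stated there as an immediate consequence of Proposition \ref{nongen-adjacency}, derived just as you do by placing $\alpha$ in some $M\in\mathcal{M}(G)$, noting $\alpha\in\widetilde{M}$ because $\langle\alpha\rangle\notin\mathcal{M}(G)$, and then using $(2)\Rightarrow(1)$ to produce $\beta\in G\setminus M$ whose maximal cyclic overgroup $M'$ is necessarily different from $M$. Your handling of the edge case $\alpha=1$ and the explicit check that $M\neq M'$ are both sound.
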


We prove the following result relating $\overline{M}$ and $\widetilde{M}$ for $M\in\mathcal{M}(G)$.

\begin{proposition}\label{M-M-1}
Let $G$ be a finite non-cyclic abelian group. For $M\in\mathcal{M}(G)$, $\overline{M}=\widetilde{M}$ if and only if every Sylow subgroup of $G$ is non-cyclic.
\end{proposition}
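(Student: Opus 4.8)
The plan is to recognize that the asserted equivalence is, at bottom, a restatement of Proposition \ref{nongen-adjacency}, so the real work is only to translate the set equality $\overline{M}=\widetilde{M}$ into the membership condition appearing there. First I would recall, from the paragraph introducing $\overline{M}$ just before Proposition \ref{M-bar}, that the inclusion $\overline{M}\subseteq\widetilde{M}$ already holds for every $M\in\mathcal{M}(G)$: no generator of $M$ can lie in any intersection $M\cap\langle y\rangle$ with $y\in G\setminus M$, since otherwise $M=\langle y\rangle$ would force $y\in M$. Consequently $\overline{M}=\widetilde{M}$ is equivalent to the reverse inclusion $\widetilde{M}\subseteq\overline{M}$, and this is the only thing that must be characterized.

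Next I would unpack the definition of $\overline{M}$ as the union of the sets $M\cap\langle y\rangle$ over $y\in G\setminus M$. An element $\alpha$ lies in $\overline{M}$ exactly when $\alpha\in M\cap\langle y\rangle$ for some $y\in G\setminus M$; and since every $\alpha\in\widetilde{M}$ already belongs to $M$, this is the same as requiring the existence of some $\beta\in G\setminus M$ with $\alpha\in\langle\beta\rangle$. Therefore the inclusion $\widetilde{M}\subseteq\overline{M}$ says precisely that every non-generator of $M$ is a power of some element outside $M$, which is verbatim statement (1) of Proposition \ref{nongen-adjacency}.

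With this identification established, the argument closes by invoking Proposition \ref{nongen-adjacency}, whose equivalence $(1)\Leftrightarrow(2)$ states that statement (1) holds if and only if every Sylow subgroup of $G$ is non-cyclic. Chaining the two equivalences yields $\overline{M}=\widetilde{M}$ if and only if every Sylow subgroup of $G$ is non-cyclic, as desired.

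I do not expect a genuine obstacle here, since the substantive content—the explicit construction of an element $\beta\in G\setminus M$ whose powers realize a prescribed non-generator $\alpha$, via the congruence solvability of Lemma \ref{equiva}—has already been carried out inside the proof of Proposition \ref{nongen-adjacency}. The only point demanding care is reading $\overline{M}$ correctly as that union of intersections $M\cap\langle y\rangle$, together with the standing fact $\overline{M}\subseteq\widetilde{M}$, so that one is indeed reduced to the single inclusion supplied by the earlier proposition.
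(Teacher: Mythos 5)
Your proposal is correct and matches the paper's own proof essentially verbatim: both reduce the equality $\overline{M}=\widetilde{M}$ to the inclusion $\widetilde{M}\subseteq\overline{M}$ (using the standing fact $\overline{M}\subseteq\widetilde{M}$), identify that inclusion with statement (1) of Proposition \ref{nongen-adjacency}, and conclude by its equivalence with statement (2). No further commentary is needed.
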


\begin{proof}
Proposition \ref{nongen-adjacency} implies that $\widetilde{M}\subseteq \overline{M}$ if and only if every Sylow subgroup of $G$ is non-cyclic. Then the result follows from the fact that $\overline{M}\subseteq \widetilde{M}$ always. \end{proof}

\begin{proposition}\label{M-M-2}
Let $G$ be a finite non-cyclic abelian group. Suppose that $r\geq 2$. Then the following statements are equivalent for any $M\in\mathcal{M}(G)$.
\begin{enumerate}
\item[(1)] The cut set $\widetilde{M}$ of $\mathcal{P}(G)$ is minimal.
\item[(2)] $\overline{M}=\widetilde{M}$.
\item[(3)] Every Sylow subgroup of $G$ is non-cyclic.
\end{enumerate}
\end{proposition}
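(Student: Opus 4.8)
The plan is to prove the equivalence as a short cycle of implications, leaning heavily on the fact that the link $(2)\Leftrightarrow(3)$ is already supplied by Proposition \ref{M-M-1}. That proposition carries all the representation-theoretic work of comparing $\overline{M}$ and $\widetilde{M}$, so the only genuinely new task is to splice statement $(1)$ into the chain. Concretely, I would prove $(1)\Rightarrow(2)$ and $(3)\Rightarrow(1)$, and then combine these with Proposition \ref{M-M-1} to obtain $(1)\Rightarrow(2)\Leftrightarrow(3)\Rightarrow(1)$, which establishes all three equivalences.

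First I would handle $(1)\Rightarrow(2)$. The two facts I would invoke are that $\overline{M}\subseteq\widetilde{M}$ always holds (as noted before Proposition \ref{M-bar}) and that $\overline{M}$ is a cut-set of $\mathcal{P}(G)$ by Proposition \ref{M-bar}. Assuming $(1)$, suppose for contradiction that $\overline{M}\subsetneq\widetilde{M}$. Then $\overline{M}$ would be a cut-set properly contained in $\widetilde{M}$, contradicting the minimality of $\widetilde{M}$, since no proper subset of a minimal cut-set is a cut-set (this is recorded in the definitions in the introduction). Hence $\overline{M}=\widetilde{M}$, which is exactly $(2)$. This direction requires essentially no computation.

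Next, $(2)\Leftrightarrow(3)$ is precisely the statement of Proposition \ref{M-M-1}, so I would simply cite it. It then remains to prove $(3)\Rightarrow(1)$. Assuming $(3)$, every Sylow subgroup of $G$ is non-cyclic, and because $r\geq 2$ this means $G$ has at least two non-cyclic Sylow subgroups; therefore Proposition \ref{M-bar-min} applies and gives that $\overline{M}$ is a minimal cut-set of $\mathcal{P}(G)$. On the other hand, $(3)\Rightarrow(2)$ (again via Proposition \ref{M-M-1}) yields $\overline{M}=\widetilde{M}$. Substituting, $\widetilde{M}$ is itself a minimal cut-set, which is $(1)$, and the cycle closes.

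I do not anticipate a serious obstacle, since the proposition is largely a repackaging of the earlier cut-set results; the one point demanding care is the use of the hypothesis $r\geq 2$ in $(3)\Rightarrow(1)$. It is exactly this hypothesis that upgrades ``every Sylow subgroup is non-cyclic'' into ``at least two Sylow subgroups are non-cyclic,'' which is the precise requirement of Proposition \ref{M-bar-min}. I would flag this explicitly, since in the $p$-group case $r=1$ the set $\overline{M}$ need not be a minimal cut-set and the equivalence can break down.
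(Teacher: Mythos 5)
Your proposal is correct, and two of its three implications coincide with the paper's proof: the paper also disposes of $(1)\Rightarrow(2)$ by noting that $\overline{M}$ and $\widetilde{M}$ are both cut-sets with $\overline{M}\subseteq\widetilde{M}$, and it also quotes Proposition \ref{M-M-1} for $(2)\Leftrightarrow(3)$. Where you genuinely diverge is $(3)\Rightarrow(1)$. The paper proves this directly: for each $\alpha\in\widetilde{M}$ it takes the separation $(A,B)=(G\setminus M,\, M\setminus\widetilde{M})$ from Proposition \ref{all-group}, uses Proposition \ref{nongen-adjacency} to produce $\beta\in G\setminus M$ with $\alpha\in\langle\beta\rangle$, and uses Proposition \ref{all-group1} (justified by $r\geq 2$ plus (3)) to see that $\mathcal{P}(A)$ is connected, concluding that $\mathcal{P}(G\setminus(\widetilde{M}\setminus\{\alpha\}))$ is connected. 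You instead note that (3) together with $r\geq 2$ gives at least two non-cyclic Sylow subgroups, invoke Proposition \ref{M-bar-min} to get that $\overline{M}$ is a minimal cut-set, and then transport minimality across the equality $\overline{M}=\widetilde{M}$ supplied by Proposition \ref{M-M-1}. Your route is more modular and avoids duplication: the paper's direct argument is in effect a re-run of the proof of Proposition \ref{M-bar-min} with $\widetilde{M}$ in place of $\overline{M}$ (with Proposition \ref{nongen-adjacency} playing the role that the definition of $\overline{M}$ plays there), so citing that proposition and substituting is logically equivalent but shorter. The paper's version has the minor merit of being self-contained at the level of exhibiting the reconnecting vertex $\beta$ explicitly. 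Your closing remark correctly isolates the role of $r\geq 2$; it is exactly what the paper needs to apply Proposition \ref{all-group1}, and what you need to apply Proposition \ref{M-bar-min}.
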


\begin{proof}
Clearly, $(1) \Rightarrow (2)$, since both $\overline{M}$ and $\widetilde{M}$ are cut-sets of $\mathcal{P}(G)$ with $\overline{M}\subseteq \widetilde{M}$. We have $(2)\Leftrightarrow (3)$ by Proposition \ref{M-M-1}. We prove $(3)\Rightarrow (1)$. So assume that every Sylow subgroup of $G$ is non-cyclic. Let $M\in\mathcal{M}(G)$. We need to show that $\mathcal{P}(G\setminus (\widetilde{M}-\{\alpha\}))$ is connected for every $\alpha\in \widetilde{M}$. Set $A=G\setminus M$ and $B=M\setminus \widetilde{M}$. By Proposition \ref{all-group}, $(A,B)$ is a separation of $\mathcal{P}(G\setminus\widetilde{M})$.  Clearly, $\mathcal{P}(B)$ is a clique and $\alpha$ is adjacent with every element of $B$. By Proposition \ref{nongen-adjacency}, there exists $\beta\in A=G\setminus M$ such that $\alpha\in\langle\beta\rangle$, that is, $\beta\sim\alpha$.
Since $r\geq 2$, by Proposition \ref{all-group1}, $\mathcal{P}(A)$ is connected. It follows that $\mathcal{P}(G\setminus (\widetilde{M}-\{\alpha\}))$ is connected.
\end{proof}

\section{Proof of Theorem \ref{main-1}}\label{sec-main-1}

We need the following elementary result in the proof of Theorem \ref{main-1} when $p_k\geq r+1$.

\begin{lemma}\label{inequality1}
If $q_1,q_2,\ldots, q_{t}$ are primes with $q_{1}< q_{2}< \cdots <q_{t}$, then $(t+1)\phi(q_{1}q_{2} \ldots q_{t})\geq q_{1} q_{2} \ldots q_{t}$. Further, equality holds if and only if $(t, q_1)=(1,2)$ or $(t, q_1,q_2)=(2,2,3)$.
\end{lemma}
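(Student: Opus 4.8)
The plan is to rewrite the inequality in multiplicative form and reduce it, for each fixed $t$, to the case where $q_1,\ldots,q_t$ are the first $t$ primes. Since the $q_i$ are distinct primes, $\phi(q_1q_2\ldots q_t)=\prod_{i=1}^t(q_i-1)$, so after dividing through by $q_1q_2\ldots q_t$ the assertion $(t+1)\phi(q_1q_2\ldots q_t)\ge q_1q_2\ldots q_t$ becomes
\[
(t+1)\prod_{i=1}^t\left(1-\frac{1}{q_i}\right)\ge 1.
\]
Write $\pi_i$ for the $i$-th smallest prime, so $\pi_1=2,\pi_2=3,\pi_3=5,\ldots$. As $q_1<q_2<\cdots<q_t$ are distinct primes we have $q_i\ge\pi_i$ for every $i$, and since $x\mapsto 1-1/x$ is strictly increasing this gives $\prod_{i=1}^t(1-1/q_i)\ge\prod_{i=1}^t(1-1/\pi_i)$, with equality if and only if $q_i=\pi_i$ for all $i$ (every factor being positive). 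Thus it suffices to prove $h(t):=(t+1)\prod_{i=1}^t(1-1/\pi_i)\ge 1$ and to determine when equality holds there.

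Next I would establish the bound for $h(t)$ by a monotonicity argument. A direct check gives $h(1)=2\cdot\tfrac12=1$ and $h(2)=3\cdot\tfrac12\cdot\tfrac23=1$. For the inductive step I compute the ratio
\[
\frac{h(t+1)}{h(t)}=\frac{t+2}{t+1}\cdot\frac{\pi_{t+1}-1}{\pi_{t+1}},
\]
which exceeds $1$ exactly when $\pi_{t+1}>t+2$. The one elementary estimate I need is $\pi_n\ge n+2$ for all $n\ge 3$: for $n\ge 3$ the primes $\pi_3,\pi_4,\ldots$ are distinct odd numbers each at least $5$, listed increasingly, so $\pi_n\ge 5+2(n-3)=2n-1\ge n+2$. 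Hence $\pi_{t+1}\ge(t+1)+2=t+3>t+2$ for every $t\ge 2$, so $h(t+1)>h(t)$ for all $t\ge 2$. Combined with $h(2)=1$ this yields $h(t)>1$ for all $t\ge 3$, while $h(1)=h(2)=1$.

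Finally I would assemble the equality analysis. The chain $(t+1)\prod_{i=1}^t(1-1/q_i)\ge h(t)\ge 1$ is an overall equality precisely when both of its inequalities are equalities, that is, when $q_i=\pi_i$ for all $i$ and $h(t)=1$. Since $h(t)=1$ holds only for $t\in\{1,2\}$, this forces either $t=1$ with $q_1=2$, giving $(t,q_1)=(1,2)$, or $t=2$ with $q_1=2$ and $q_2=3$, giving $(t,q_1,q_2)=(2,2,3)$; conversely one checks that both of these configurations do attain equality. The only genuinely nonroutine ingredient is the elementary prime estimate $\pi_n\ge n+2$ for $n\ge 3$ that drives the monotonicity of $h$; I expect the remaining care to lie in tracking the two equality conditions correctly through the reduction rather than in any difficult inequality.
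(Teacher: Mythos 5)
Your proof is correct, but it takes a genuinely different route from the paper's. After the common normalization step (dividing by $q_1q_2\ldots q_t$ to get $(t+1)\prod_{i=1}^{t}(1-1/q_i)\ge 1$), the paper compares the primes with consecutive \emph{integers} rather than with the prime sequence: since $q_1<q_2<\cdots<q_t$ are distinct integers with $q_1\ge 2$, one has $q_i\ge i+1$, hence
\[
\prod_{i=1}^{t}\left(1-\frac{1}{q_i}\right)\;\ge\;\prod_{i=1}^{t}\left(1-\frac{1}{i+1}\right)=\prod_{i=1}^{t}\frac{i}{i+1}=\frac{1}{t+1},
\]
where the lower bound telescopes to exactly $1/(t+1)$; no induction and no knowledge of the primes beyond $q_1\ge 2$ is needed, and equality forces $q_i=i+1$ for every $i$, which is compatible with primality only when $t\le 2$ (already $q_3=4$ is impossible), recovering the two equality cases. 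You instead compare $q_i$ with the $i$-th prime $\pi_i$, which leaves you with the separate task of proving $h(t)=(t+1)\prod_{i=1}^{t}(1-1/\pi_i)\ge 1$; you do this by a monotonicity induction driven by the estimate $\pi_n\ge n+2$ for $n\ge 3$, which you prove via the parity argument $\pi_n\ge 2n-1$. Both arguments are complete and the equality bookkeeping in yours is handled carefully (equality forces $q_i=\pi_i$ together with $h(t)=1$, hence $t\in\{1,2\}$). What your approach buys is a tighter intermediate bound and a fully explicit equality analysis, at the cost of an extra induction and a prime-gap estimate; what the paper's integer comparison buys is that the entire inequality, and essentially also the equality discussion, collapses to a one-line telescoping computation.
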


\begin{proof}
Since $q_1\geq 2$, the inequality follows from the following:
\begin{equation*}
\begin{aligned}
\frac{\phi(q_1q_2\ldots q_{t})}{q_1q_2\ldots q_{t}} & = \left(1-\frac{1}{q_1}\right)\left(1-\frac{1}{q_2}\right)\cdots \left(1-\frac{1}{q_{t}}\right)\\
& \geq \left(1-\frac{1}{2}\right)\left(1-\frac{1}{3}\right)\cdots \left(1-\frac{1}{t+1}\right)= \frac{1}{t+1}.
\end{aligned}
\end{equation*}
Clearly, the above inequality is strict except in the cases $(t, q_1)=(1,2)$ and $(t, q_1,q_2)=(2,2,3)$.
\end{proof}
\medskip

\begin{proof}[{\bf Proof of Theorem \ref{main-1}}]
By Proposition \ref{nilpotent-mincut}, $Q$ is a minimal cut-set of $\mathcal{P}(G)$. Let $X$ be any minimal cut-set of $\mathcal{P}(G)$ different from $Q$. Then $Q\nsubseteq X$. Lemma \ref{max-cyclic} implies that every member of $\mathcal{M}(G)$ contains $P_i$, $i\neq k$ and so contains $Q$. Taking $D=Q$ in Proposition \ref{existence-0}, it follows that there exists $M\in\mathcal{M}(G)$ such that $\mathcal{P}(M\setminus X)=\mathcal{P}(M\setminus (X\cap M))$ is disconnected. So $\kappa(\mathcal{P}(M))\leq |X\cap M|$.
We have $|Q|=p_{1}^{n_{1}}\ldots p_{k-1}^{n_{k-1}}p_{k+1}^{n_{k+1}}\ldots p_{r}^{n_{r}}$ and $|M|= p_{1}^{n_{1}}\ldots p_{k-1}^{n_{k-1}}p_{k}^{l}p_{k+1}^{n_{k+1}}\ldots p_{r}^{n_{r}}$ for some integer $l$ with $1\leq l< n_{k}$. In order to prove the theorem, we show that $|X|> |Q|$ in both cases.

We first assume that $p_k\geq r+1$. Then $\phi(p_k)\geq r$. Since $1+\phi(|M|)\leq \kappa(\mathcal{P}(M))\leq |X\cap M|\leq |X|$, we have
\begin{eqnarray*}
|X| - |Q| & \geq & 1+\phi \left(p_{1}^{n_{1}}\ldots p_{k-1}^{n_{k-1}}p_{k}^{l}p_{k+1}^{n_{k+1}}\ldots p_{r}^{n_{r}}\right)-p_{1}^{n_{1}}\ldots p_{k-1}^{n_{k-1}}p_{k+1}^{n_{k+1}}\ldots p_{r}^{n_{r}}\\
& > & \phi\left(p_k^l\right) \phi \left(p_{1}^{n_{1}}\ldots p_{k-1}^{n_{k-1}}p_{k+1}^{n_{k+1}}\ldots p_{r}^{n_{r}}\right)-p_{1}^{n_{1}}\ldots p_{k-1}^{n_{k-1}}p_{k+1}^{n_{k+1}}\ldots p_{r}^{n_{r}}\\
&\geq &\phi(p_k)\phi(p_1\ldots p_{k-1}p_{k+1}\ldots p_r)-p_1\ldots p_{k-1}p_{k+1}\ldots p_r\\
&\geq &r\phi(p_1\ldots p_{k-1}p_{k+1}\ldots p_r)-p_1\ldots p_{k-1}p_{k+1}\ldots p_r\\
&\geq & 0.
\end{eqnarray*}
In the above, the last inequality holds by Lemma \ref{inequality1}. Thus $|X|> |Q|$.\\

We now assume that $2\phi(p_1\ldots p_{r-1}) > p_1\ldots p_{r-1}$. Then, by Theorem \ref{cyclic-connectivity}(i), we have
\begin{eqnarray*}
\kappa(\mathcal{P}(M))& = & \phi(|M|) + p_1^{n_1 -1}\ldots p_{k-1}^{n_{k-1} -1}p_{k}^{l-1}p_{k+1}^{n_{k+1} -1}\ldots p_r^{n_r -1} \left[p_1p_2\ldots p_{r-1} - \phi(p_1p_2\ldots p_{r-1})\right]\\
 & \geq & \phi(|M|) + p_1^{n_1 -1}\ldots p_{k-1}^{n_{k-1} -1}p_{k+1}^{n_{k+1} -1}\ldots p_r^{n_r -1} \left[p_1p_2\ldots p_{r-1} - \phi(p_1p_2\ldots p_{r-1})\right].
\end{eqnarray*}
Since $\kappa(\mathcal{P}(M))\leq |X\cap M|\leq |X|$ and $\phi(|M|)\geq \phi(p_1p_2\ldots p_k\ldots p_r)$, we get
\begin{eqnarray*}
|X| - |Q| & \geq & \kappa(\mathcal{P}(M))-|Q|\\
 & \geq &\phi(p_1p_2\ldots p_r)+p_1p_2\ldots p_{r-1}-\phi(p_1p_2\ldots p_{r-1})-p_1\ldots p_{k-1}p_{k+1}\ldots  p_r\\
 & = & \phi(p_1p_2\ldots p_{r-1})(p_{r}-2)-p_1\ldots p_{k-1}p_{k+1}\ldots p_{r-1}(p_{r}-p_k).
\end{eqnarray*}
If $k=r,$ then $|X|-|Q| \geq \phi(p_1p_2\ldots p_{r-1})(p_{r}-2)>0$ (note that $r\geq 2$ implies $p_r\geq 3$). So let $k\in \{1,2,\ldots ,r-1\}$. Since $2\phi(p_1\ldots p_{r-1}) > p_1\ldots p_{r-1}$, we have
$$\frac{p_1\ldots p_{k-1}p_{k+1}\ldots p_{r-1}}{\phi(p_1\ldots p_{k-1}p_{k+1}\ldots p_{r-1})}< \frac{2\phi(p_{k})}{p_{k}}.$$
Since $2\leq p_k<p_r,$ we have $p_r-p_k\leq p_r-2$ and $2\phi(p_k)\leq p_k\phi(p_k)$. Therefore,
$$\frac{2\phi(p_{k})}{p_{k}}\leq \frac{\phi(p_{k})(p_{r}-2)}{p_{r}-p_{k}}.$$
Combining the above two inequalities, we get
$$\frac{p_1\ldots p_{k-1}p_{k+1}\ldots p_{r-1}}{\phi(p_1\ldots p_{k-1}p_{k+1}\ldots p_{r-1})}< \frac{\phi(p_{k})(p_{r}-2)}{p_{r}-p_{k}}.$$
This gives $\phi(p_1p_2\ldots p_{r-1})(p_{r}-2)-p_1\ldots p_{k-1}p_{k+1}\ldots p_{r-1}(p_{r}-p_k)>0$ and so $|X|> |Q|$.
\end{proof}

\section{Proof of Theorem \ref{main-2}}\label{sec-main-2}

We start with the following result.

\begin{lemma}\label{max-min-cut}
Let $H$ be a cyclic group of order $p_1^{n_1} p_2^{n_2}$ and $X$ be a cut-set of $\mathcal{P}(H)$. If $p_1\geq 3$, then $|X|> |\widetilde{H}|$.
\end{lemma}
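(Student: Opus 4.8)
The plan is to bound any cut-set $X$ from below by the vertex connectivity $\kappa(\mathcal{P}(H))$ and then compare this quantity with $|\widetilde{H}|$ using the explicit formula available for cyclic groups whose order is divisible by exactly two primes. Since $|H|=p_1^{n_1}p_2^{n_2}$ is divisible by the two distinct primes $p_1,p_2$, the group $H$ is not a cyclic $p$-group, so $\mathcal{P}(H)$ is not complete and $\kappa(\mathcal{P}(H))$ is exactly the size of a minimum cut-set. Hence every cut-set $X$ satisfies $|X|\geq \kappa(\mathcal{P}(H))$, and it suffices to prove the strict inequality $\kappa(\mathcal{P}(H)) > |\widetilde{H}|$.

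First I would record the two quantities explicitly. By Theorem \ref{cyclic-connectivity}(ii), $\kappa(\mathcal{P}(H)) = \phi\!\left(p_1^{n_1}p_2^{n_2}\right) + p_1^{n_1-1}p_2^{n_2-1}$, while $|\widetilde{H}| = p_1^{n_1}p_2^{n_2} - \phi\!\left(p_1^{n_1}p_2^{n_2}\right)$, since the number of non-generators of a cyclic group of order $m$ equals $m-\phi(m)$. Writing $n=p_1^{n_1}p_2^{n_2}$, the desired inequality $\kappa(\mathcal{P}(H)) > |\widetilde{H}|$ becomes $2\phi(n) + p_1^{n_1-1}p_2^{n_2-1} > n$.

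The remaining step is a routine reduction. Factoring out the common quantity $a=p_1^{n_1-1}p_2^{n_2-1}$, so that $n=a\,p_1p_2$ and $\phi(n)=a(p_1-1)(p_2-1)$, the inequality is equivalent to $2(p_1-1)(p_2-1)+1 > p_1p_2$, which rearranges to $(p_1-2)(p_2-2) > 1$. This is precisely where the hypothesis $p_1\geq 3$ enters: it gives $p_1-2\geq 1$, while $p_2 > p_1 \geq 3$ forces $p_2\geq 5$ and hence $p_2-2\geq 3$, so $(p_1-2)(p_2-2)\geq 3 > 1$. Combining the chain $|X|\geq \kappa(\mathcal{P}(H)) > |\widetilde{H}|$ then yields the claim.

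I do not expect a serious obstacle: the only substantive input is the connectivity formula of Theorem \ref{cyclic-connectivity}(ii), and everything else is elementary arithmetic. The one point deserving care is that the inequality must be \emph{strict}, which holds automatically because $(p_1-2)(p_2-2)\geq 3$ strictly exceeds $1$. It is also worth noting that the argument genuinely breaks down when $p_1=2$, since then $p_1-2=0$ and $(p_1-2)(p_2-2)>1$ fails; this is consistent with the hypothesis $p_1\geq 3$ and explains why the even prime must be excluded.
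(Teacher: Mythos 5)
Your proposal is correct and follows essentially the same route as the paper: both proofs bound $|X|\geq\kappa(\mathcal{P}(H))$, invoke the formula $\kappa(\mathcal{P}(H))=\phi\left(p_1^{n_1}p_2^{n_2}\right)+p_1^{n_1-1}p_2^{n_2-1}$ from Theorem \ref{cyclic-connectivity}(ii), and reduce the comparison with $|\widetilde{H}|=p_1^{n_1}p_2^{n_2}-\phi\left(p_1^{n_1}p_2^{n_2}\right)$ to the same arithmetic fact, which the paper writes as $p_1p_2-2p_1-2p_2+3>0$ and you write equivalently as $(p_1-2)(p_2-2)>1$, both consequences of $p_1\geq 3$ (and $p_2>p_1$). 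The only cosmetic difference is your factored form of the final inequality, which makes the failure at $p_1=2$ slightly more transparent.
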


\begin{proof}
We have $|\widetilde{H}|=p_{1}^{n_{1}}p_{2}^{n_2}-\phi\left(p_{1}^{n_{1}}p_{2}^{n_2}\right)$. Since $\mathcal{P}(H\setminus X)$ is disconnected, we have $|X|\geq \kappa(\mathcal{P}(H))=\phi \left(p_{1}^{n_{1}}p_{2}^{n_2}\right)+p_{1}^{n_{1}-1}p_{2}^{n_2-1}$ by Theorem \ref{cyclic-connectivity}(ii). Then
\begin{eqnarray*}
|X| - |\widetilde{H}| & \geq & \phi \left(p_{1}^{n_{1}}p_{2}^{n_2}\right)+p_{1}^{n_{1}-1}p_{2}^{n_2-1}-\left(p_{1}^{n_{1}}p_{2}^{n_2}-\phi\left(p_{1}^{n_{1}}p_{2}^{n_2}\right) \right)\\
& = & p_1^{n_1 -1}p_2^{n_2 -1}\left[ p_1p_2-2p_1-2p_2+3\right] \\
& > & 0
\end{eqnarray*}
Here the last inequality holds as $p_1\geq 3$. So $|X|> |\widetilde{H}|$.
\end{proof}

In the rest of this section, $G$ is a finite non-cyclic abelian group of order $p_1^{n_1} p_2^{n_2}$ and $P_i$ is the Sylow $p_i$-subgroup of $G$, $i=1,2$.

\subsection{Proof of Theorem \ref{main-2}(i)}

The proof of the following result is similar to that of Theorem \ref{main-1}.

\begin{proposition}\label{P-1-cyclic}
Suppose that $p_1=2$, $P_1$ is non-cyclic and $P_2$ is cyclic. Then $P_2$ is a minimum cut-set of $\mathcal{P}(G)$ and so $\kappa(\mathcal{P}(G))=p_2^{n_2}$.
\end{proposition}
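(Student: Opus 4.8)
The plan is to follow the strategy of the proof of Theorem \ref{main-1}, using Proposition \ref{existence-0} to force any competing cut-set onto a single maximal cyclic subgroup whose connectivity is already known. First I would invoke Proposition \ref{nilpotent-mincut} with $r=2$ and $k=1$: since $P_1$ is abelian and non-cyclic it is certainly not a generalized quaternion $2$-group, so $Q=P_2$ is a minimal cut-set of $\mathcal{P}(G)$ of size $|P_2|=p_2^{n_2}$. It then remains to prove that $P_2$ has minimum size among all cut-sets; since every cut-set contains a minimal one, it suffices to show $|X|\ge p_2^{n_2}$ for every minimal cut-set $X\neq P_2$.

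Fix such an $X$. By minimality of $X$ we cannot have $P_2\subseteq X$ (otherwise the cut-set $P_2$ would be a proper subset of $X$), so $P_2\nsubseteq X$. Because $P_2$ is cyclic, Lemma \ref{max-cyclic} shows that every member of $\mathcal{M}(G)$ contains $P_2$; taking $D=P_2$ in Proposition \ref{existence-0} therefore produces some $M\in\mathcal{M}(G)$ for which $\mathcal{P}(M\setminus X)=\mathcal{P}(M\setminus(X\cap M))$ is disconnected, whence $\kappa(\mathcal{P}(M))\le |X\cap M|\le |X|$.

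The computation is now the crux. By Lemma \ref{max-cyclic}, $M=M_1P_2$ with $M_1\in\mathcal{M}(P_1)$; as $P_1$ is a non-cyclic $2$-group, $|M_1|=2^{l}$ with $1\le l<n_1$, so $M$ is cyclic of order $2^{l}p_2^{n_2}$, in particular not of prime power order, hence $\mathcal{P}(M)$ is not complete. Applying Theorem \ref{cyclic-connectivity}(ii) to $M$ (the relevant pair of primes being $2$ and $p_2$) and simplifying gives
$$\kappa(\mathcal{P}(M))=\phi\!\left(2^{l}p_2^{n_2}\right)+2^{l-1}p_2^{n_2-1}=2^{l-1}p_2^{n_2-1}\bigl[(p_2-1)+1\bigr]=2^{l-1}p_2^{n_2}.$$
Since $l\ge 1$, this is at least $p_2^{n_2}$, and combining with the previous bound yields $|X|\ge p_2^{n_2}=|P_2|$. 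Hence $P_2$ is a minimum cut-set and $\kappa(\mathcal{P}(G))=p_2^{n_2}$.

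The one point I expect to require care — and the reason this case is split off from Theorem \ref{main-1} — is that the estimate here is only $\kappa(\mathcal{P}(M))\ge p_2^{n_2}$ rather than a strict inequality: when $l=1$ equality holds, so a competing minimal cut-set of size exactly $p_2^{n_2}$ can genuinely occur. Thus, unlike in the proof of Theorem \ref{main-1}, I would \emph{not} attempt to prove uniqueness of the minimum cut-set, which is consistent with the remark (and Example \ref{example}) following Theorem \ref{main-2}.
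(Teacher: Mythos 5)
Your proposal is correct and follows essentially the same route as the paper's proof: establish minimality of $P_2$ via Proposition \ref{nilpotent-mincut}, force any other minimal cut-set $X$ to disconnect some $M\in\mathcal{M}(G)$ via Proposition \ref{existence-0} with $D=P_2$, and then bound $|X|\geq\kappa(\mathcal{P}(M))\geq p_2^{n_2}$ using Lemma \ref{max-cyclic} and Theorem \ref{cyclic-connectivity}(ii). Your closed-form simplification $\kappa(\mathcal{P}(M))=2^{l-1}p_2^{n_2}$ and your closing observation about the $l=1$ equality case (matching Example \ref{example}) are both accurate refinements of the same argument.
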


\begin{proof}
By Proposition \ref{nilpotent-mincut}, $P_2$ is a minimal cut-set of $\mathcal{P}(G)$. Let $X$ be any minimal cut-set of $\mathcal{P}(G)$ different from $P_2$. Then $P_2\nsubseteq X$. Since $P_2$ is cyclic, Proposition \ref{existence-0} implies that there exists $M\in\mathcal{M}(G)$ such that $\mathcal{P}(M\setminus X)=\mathcal{P}(M\setminus (X\cap M))$ is disconnected. So $\kappa(\mathcal{P}(M))\leq |X\cap M|$.

Again, since $P_2$ is cyclic, Lemma \ref{max-cyclic} implies that $M$ contains $P_2$ and so $|M|=2^lp_2^{n_2}$ for some $l$ with $1\leq l <n_1$. By Theorem \ref{cyclic-connectivity}(ii),
$$\kappa(\mathcal{P}(M))= \phi\left(2^lp_2^{n_2}\right) + 2^{l -1} p_2^{n_2 -1}\geq \phi\left(p_2^{n_2}\right) + p_2^{n_2 -1}=p_2^{n_2}.$$
Then $|X|-|P_2|\geq |X\cap M| - p_2^{n_2}\geq \kappa(\mathcal{P}(M)) - p_2^{n_2}\geq p_2^{n_2} - p_2^{n_2}=0$, giving $|X|\geq |P_2|$. So $\kappa(\mathcal{P}(G))=|P_2|=p_2^{n_2}$.
\end{proof}

\begin{proof}[{\bf Proof of Theorem \ref{main-2}(i)}]
If $p_1\geq 3$ and $P_i$ is non-cyclic, then Theorem \ref{main-1} implies that $P_j$ is the only minimum cut-set of $\mathcal{P}(G)$ and so $\kappa(\mathcal{P}(G))=|P_j|=p_j^{n_j}$, where $\{i,j\}=\{1,2\}$.

Assume that $p_1=2$. We have $p_2\geq 3$. If $P_2$ is non-cyclic, then Theorem \ref{main-1} again implies that $P_1$ is the only minimum cut-set of $\mathcal{P}(G)$ and so $\kappa(\mathcal{P}(G))=|P_1|=2^{n_1}$. If $P_1$ is non-cyclic, then $\kappa(\mathcal{P}(G))=p_2^{n_2}$ by Proposition \ref{P-1-cyclic} with $P_2$ being a minimum cut-set.
\end{proof}

We note that if $p_1=2$, $P_1$ is non-cyclic and $P_2$ is cyclic, then there might be minimum cut-sets of $\mathcal{P}(G)$ which are different from $P_2$. The following example justifies this statement.

\begin{example}\label{example}
Let $G= \langle a\rangle \times \langle b\rangle\times \langle x\rangle$, where $a,b$ are of order $2$ and $x$ is of order $3$. Then $|G|=2^2 \times 3$. By Proposition \ref{P-1-cyclic}, $\kappa(\mathcal{P}(G))=3$ and the Sylow $3$-subgroup $\langle x\rangle$ of $G$ is a minimum cut-set of $\mathcal{P}(G)$. Consider the subset $X_1=\{1,ax,ax^2\}$ of $G$ of size $3$. Taking $A=\{a\}$ and $B=G\setminus (X_1\cup A)$, one can see that $(A,B)$ is a separation of $\mathcal{P}(G\setminus X_1)$ and so $X_1$ is a minimum cut-set of $\mathcal{P}(G)$ different from $\langle x\rangle$. In fact, $\langle x\rangle$, $X_1$, $X_2=\{1,bx,bx^2\}$ and $X_3=\{1,abx,abx^2\}$ are precisely the minimum cut-sets of $\mathcal{P}(G)$.
\end{example}

\subsection{Proof of Theorem \ref{main-2}(ii)}

\begin{proof}[{\bf Proof of Theorem \ref{main-2}(ii)}]
Let $X$ be a minimal cut-set of $\mathcal{P}(G)$. By Propositions \ref{nilpotent-mincut} and \ref{M-M-2}, each of $P_1$, $P_2$ and $\widetilde{C}$ is a minimal cut-set of $\mathcal{P}(G)$. We may assume that $X\neq P_1$ and $X\neq P_2$. Then $P_1\nsubseteq X$ and $P_2\nsubseteq X$. We shall show that $|X|\geq |\widetilde{C}|$.

If the subgraph $\mathcal{P}(M\setminus X)$ is disconnected for some $M\in\mathcal{M}(G)$, then $p_1\geq 3$ implies that $|X|> |\widetilde{M}|\geq |\widetilde{C}|$ by Lemma \ref{max-min-cut} and Proposition \ref{size-compare}.
So we may suppose that $\mathcal{P}(M\setminus X)$ is connected for every $M\in\mathcal{M}(G)$. Then Proposition \ref{gen-connected} implies that $X$ does not contain generators of any maximal cyclic subgroup of $G$.
Fix a separation $(A,B)$ of $\mathcal{P}(G\setminus X)$. By Proposition \ref{contain}, we may assume that both $P_1\setminus X$ and $P_2\setminus X$ are contained in $A$. We have $B\neq \emptyset$. Let $M\in \mathcal{M}(G)$ containing an element of $B$. Then all the generators of $M$ are in $B$, and so $X$ must contains both $M\cap P_{1}$ and $M\cap P_{2}$. Then
$|X|\geq |(M\cap P_{1})\cup (M\cap P_{2})|\geq |(C\cap P_{1})\cup (C\cap P_{2})|=p_1+p_2-1=|\widetilde{C}|$.
\end{proof}

\noindent As a consequence of Theorem \ref{main-2}(ii), we have the following.

\begin{corollary}
Suppose that both $P_1, P_2$ are non-cyclic and elementary abelian. If $p_1\geq 3$, then $\kappa(\mathcal{P}(G))=\min \left\{|P_1|,p_1+p_2 -1\right\}$.
\end{corollary}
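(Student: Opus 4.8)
The plan is to deduce this directly from Theorem~\ref{main-2}(ii), whose conclusion already reads $\kappa(\mathcal{P}(G))=\min\{|P_1|,|P_2|,|\widetilde{C}|\}$; the real work is to check that its hypotheses hold in the present situation and then to simplify the minimum.

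First I would verify the existence hypothesis of Theorem~\ref{main-2}(ii). Since $P_i$ is elementary abelian, every non-identity element of $P_i$ has order $p_i$, so every cyclic subgroup of $P_i$ has order $p_i$; as $P_i$ is non-cyclic (so $n_i\geq 2$), each such subgroup is in fact a maximal cyclic subgroup of $P_i$ of order exactly $p_i$. By Lemma~\ref{max-cyclic}, every $M\in\mathcal{M}(G)$ has the form $M=M_1M_2$ with $M_i\in\mathcal{M}(P_i)$, whence $|M|=p_1p_2$. In particular $G$ has a maximal cyclic subgroup $C$ of order $p_1p_2$, and since $p_1,p_2$ are distinct primes,
\[
|\widetilde{C}| = p_1p_2-\phi(p_1p_2)=p_1p_2-(p_1-1)(p_2-1)=p_1+p_2-1.
\]

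With $p_1\geq 3$ and such a $C$ in hand, Theorem~\ref{main-2}(ii) gives $\kappa(\mathcal{P}(G))=\min\{p_1^{n_1},p_2^{n_2},p_1+p_2-1\}$. The remaining step is to discard the term $|P_2|=p_2^{n_2}$ from this minimum. Because $P_2$ is non-cyclic and elementary abelian, $n_2\geq 2$; and since $p_1<p_2$ we have $p_1+p_2-1<2p_2\leq p_2^2\leq p_2^{n_2}$. Thus $|P_2|>|\widetilde{C}|$, so $|P_2|$ never attains the minimum, and therefore $\min\{p_1^{n_1},p_2^{n_2},p_1+p_2-1\}=\min\{p_1^{n_1},p_1+p_2-1\}=\min\{|P_1|,p_1+p_2-1\}$, as claimed.

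I do not anticipate a serious obstacle: the two points requiring care are confirming that the maximal cyclic subgroups of an elementary abelian $P_i$ have order exactly $p_i$ (so that the order-$p_1p_2$ hypothesis of Theorem~\ref{main-2}(ii) is genuinely met), and the short inequality $p_2^{n_2}>p_1+p_2-1$ that justifies dropping $|P_2|$ from the minimum. Both reduce to elementary observations, so the corollary follows essentially by specialization.
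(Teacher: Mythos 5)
Your proof is correct and takes essentially the same route the paper intends: the corollary is stated there as an immediate consequence of Theorem \ref{main-2}(ii), and your verification that every maximal cyclic subgroup of $G$ has order $p_1p_2$ (via Lemma \ref{max-cyclic}), giving $|\widetilde{C}|=p_1+p_2-1$, together with the inequality $|P_2|=p_2^{n_2}\geq p_2^2>p_1+p_2-1$ that lets you drop $|P_2|$ from the minimum, is exactly the specialization the authors leave implicit.
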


\subsection{Proof of Theorem \ref{main-2}(iii)}

We first prove the following.

\begin{proposition}\label{2-gen-discon}
Suppose that both $P_1$ and $P_2$ are non-cyclic and that $P_1$ contains a maximal cyclic subgroup of order $2$ (so $p_{1}=2$). If $X$ is a minimum cut-set of $\mathcal{P}(G)$, then $X$ does not contain generators of any maximal cyclic subgroup of $G$.
\end{proposition}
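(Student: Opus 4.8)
The plan is to argue by contradiction through the equivalence recorded in Proposition~\ref{gen-connected}. Suppose $X$ is a minimum cut-set that \emph{does} contain a generator of some member of $\mathcal{M}(G)$; then Proposition~\ref{gen-connected} produces an $M_0\in\mathcal{M}(G)$ with $\mathcal{P}(M_0\setminus X)$ disconnected. By Lemma~\ref{max-cyclic} write $M_0=M_1M_2$ with $M_i\in\mathcal{M}(P_i)$, say $|M_1|=2^{l}$ and $|M_2|=p_2^{m}$ where $l,m\geq 1$. Since $\mathcal{P}(M_0\setminus X)=\mathcal{P}(M_0\setminus(X\cap M_0))$ is disconnected, $X\cap M_0$ is a cut-set of the power graph of the cyclic group $M_0$ of order $2^{l}p_2^{m}$, so Theorem~\ref{cyclic-connectivity}(ii) gives
$$|X\cap M_0|\ \geq\ \kappa(\mathcal{P}(M_0))\ =\ \phi(2^{l}p_2^{m})+2^{l-1}p_2^{m-1}\ =\ 2^{l-1}p_2^{m}.$$
Let $C=\langle t\rangle C_2$ be a maximal cyclic subgroup of $G$ of minimum order, where $\langle t\rangle\in\mathcal{M}(P_1)$ has order $2$ (available by hypothesis) and $C_2=\langle s\rangle\in\mathcal{M}(P_2)$ has minimum order $p_2^{c}$; then $|C|=2p_2^{c}$ and $|\widetilde{C}|=p_2^{c-1}(p_2+1)$. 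As $\widetilde{C}$ is a cut-set of $\mathcal{P}(G)$ by Proposition~\ref{all-group} and $X$ is minimum, $|X|\leq|\widetilde{C}|$.

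I would then reduce to a single hard case by a short arithmetic. Since $C_2$ has minimum order in $\mathcal{M}(P_2)$ we have $m\geq c$, and $l\geq 1$, $p_2\geq 3$. If $l\geq 2$ then $|X|\geq 2^{l-1}p_2^{m}\geq 2p_2^{c}>p_2^{c-1}(p_2+1)=|\widetilde{C}|$; and if $l=1$ but $m>c$ then $|X|\geq p_2^{m}\geq p_2^{c+1}>p_2^{c-1}(p_2+1)=|\widetilde{C}|$. Either case contradicts $|X|\leq|\widetilde{C}|$, so the only surviving possibility is $l=1$, $m=c$, i.e.\ $M_0$ is itself a maximal cyclic subgroup of minimum order, and I may take $M_0=C$. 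Here the bound only yields $|X\cap C|\geq p_2^{c}$, so $X$ carries at most $|\widetilde{C}|-p_2^{c}=p_2^{c-1}$ elements outside $C$, and the contradiction must come from the global geometry.

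The crux is a structural consequence of $\langle t\rangle$ being a maximal cyclic subgroup of $P_1$ of order $2$: for every $b\in P_2$, each neighbour of $tb$ in $\mathcal{P}(G)$ lies in $\langle t\rangle P_2$. Indeed, if $z=ab'$ with $a\in P_1$, $b'\in P_2$ is adjacent to $tb$, then comparing the $P_1$-parts of $\langle tb\rangle$ and $\langle z\rangle$ gives $\langle t\rangle\subseteq\langle a\rangle$ or $\langle a\rangle\subseteq\langle t\rangle$; maximality of $\langle t\rangle$ turns the first into $\langle a\rangle=\langle t\rangle$, so $a\in\{1,t\}$ in either case. Hence the set $T:=tP_2$ is joined to the rest of $G$ only through $P_2$, that is $N(T)\setminus T\subseteq P_2$, and every walk leaving $T$ in $\mathcal{P}(G\setminus X)$ must pass through $P_2\setminus X$. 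With this funnelling in hand I would fix a separation $(A,B)$ of $\mathcal{P}(G\setminus X)$, note that $t$ is a thin vertex with $N(t)=\{1\}\cup(T\setminus\{t\})$, and track how the induced separation of the disconnected graph $\mathcal{P}(C\setminus X)$ --- whose shape is one of the $c$ minimum cut-set patterns of the cyclic group $C$ described by the $(r,p_1)=(2,2)$ clause of Theorem~\ref{cyclic-connectivity} --- forces $X$ to break many connections between $T$ and the order-$p_2^{c}$ vertices whose interiors lie outside $C$. Summing these should give $|X\setminus C|\geq p_2^{c-1}+1$, whence $|X|\geq p_2^{c}+p_2^{c-1}+1>|\widetilde{C}|$, the desired contradiction.

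The main obstacle is precisely this final counting step: showing that a minimum cut-set separating $t$ from the generators of $C_2$ must spend strictly \emph{more} than the available budget of $p_2^{c-1}$ vertices genuinely outside $C$. The difficulty is that the abundant $t$-to-$C_2$ walks running through $T$ and back into $P_2$ can re-enter $C$, so bounding below the number of exterior vertices $X$ is forced to delete --- rather than merely re-using vertices of $X\cap C$ already counted --- is delicate, and may require organising these walks into internally disjoint families indexed by the distinct maximal cyclic subgroups of $P_2$. This is exactly where the hypotheses $p_1=2$ and $o(t)=2$ are essential, both because they make $\kappa(\mathcal{P}(M_0))=|M_0|/2$ as small as possible (so the easy comparison fails only for $M_0=C$) and because they produce the funnelling $N(T)\setminus T\subseteq P_2$; most of the work will lie here.
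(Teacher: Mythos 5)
Your reduction is correct as far as it goes: invoking Proposition~\ref{gen-connected} to produce $M_0$ with $\mathcal{P}(M_0\setminus X)$ disconnected, the bound $|X|\geq\kappa(\mathcal{P}(M_0))=2^{l-1}p_2^{m}$ from Theorem~\ref{cyclic-connectivity}(ii), and the comparison with $|\widetilde{C}|=p_2^{c-1}(p_2+1)$ do eliminate every case except $l=1$, $m=c$; your ``funnelling'' observation that $N(tP_2)\setminus tP_2\subseteq P_2$ (via maximality of $\langle t\rangle$ in $P_1$) is also correct. But the argument stops exactly where the content of the proposition lies: you never establish the claimed bound $|X\setminus C|\geq p_2^{c-1}+1$, and you say so yourself (``the main obstacle is precisely this final counting step''). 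This is a genuine gap, not a routine verification. For instance, when $c=1$ the budget you must exceed is a \emph{single} vertex outside $C$, while $X\cap C$ can consist of exactly the identity and the $\phi(2p_2)$ generators of $C$; nothing in the funnelling fact alone rules out a hypothetical separation in which $t$ lies in a component together with part of $T=tP_2$, and quantifying how much of $X$ must sit outside $C$ to seal off all the $T$-to-$P_2$ escape routes (which can re-enter $C$, as you note) is the whole difficulty. The proposal offers a plan for this, not a proof.

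It is worth contrasting this with how the paper closes the case you are stuck on: it abandons the cardinality contradiction $|X|>|\widetilde{C}|$ altogether. Using the same budget $|X|\leq\min\{|P_1|,|\widetilde{C}|\}$, it first proves two exclusion claims by counting --- $X$ contains no element of order $p_2^{k}$ with $k\geq c$, and $X\setminus M$ contains no element of order $2^{t}p_2^{k}$ with $t\geq 1$, $k\geq c$ --- and also produces a $2$-element $v\in P_1\setminus X$ with $v\notin M$. These facts permit explicit walks such as $\alpha\sim\beta\sim u\sim vu\sim v$ (and, for non-generators $\alpha$ of $M$, a walk through a second maximal cyclic subgroup supplied by Corollary~\ref{coro-non-adj}) connecting every vertex of $G\setminus X$ to $v$. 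Hence $\mathcal{P}(G\setminus X)$ is connected, contradicting that $X$ is a cut-set at all. This connectivity contradiction sidesteps precisely the delicate exterior-vertex count your approach requires; if you want to complete your route, the paper's two exclusion claims are the missing tools.
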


\begin{proof}
By Proposition \ref{nilpotent-mincut}, $P_1$ is a minimal cut-set of $\mathcal{P}(G)$. Let $C\in\mathcal{M}(G)$ be of minimum possible order. By Proposition \ref{M-M-2}, $\widetilde{C}$ is a minimal cut-set of $\mathcal{P}(G)$.   Since $X$ is a minimum cut-set of $\mathcal{P}(G)$, we have $|X|\leq \min \left\{|P_1|,|\widetilde{C}|\right\}$. Since $P_1$ contains a maximal cyclic subgroup of order $2$, $|C|=2p_2^c$ for some positive integers $c$. Then $|\widetilde{C}|=|C|-\phi(|C|)=2p_2^c- \phi(2p_2^c)= p_{2}^{c}+p_{2}^{c-1}$.

On the contrary, suppose that $X$ contains a generator of some $M\in\mathcal{M}(G)$. Then $X$ contains all the generators of $M$ by Lemma \ref{equi-class}. We have $|M|=2^l p_{2}^{m}$ for some positive integers $l, m$ with $m\geq c$. Then $| P_{1}\setminus X |\geq 2^l$. Otherwise, $| P_{1} \cap X |> 2^{n_{1}}-2^l$ and so $|X|  \geq  | P_{1} \cap X |+\phi \left(2^l p_{2}^{m}\right) >  2^{n_{1}}-2^l+2^{l-1}\phi \left(p_{2}^{m}\right) \geq 2^{n_1}=|P_{1}|$, a contradiction as $p_2\geq 3$.\\

\noindent {\bf Claim-1}: $X$ does not contain any element of order $p_{2}^{k}$ with $k\geq c$.

Suppose that $x$ is such an element in $X$. Then $\langle x \rangle$ is of order $p_{2}^{k}$ and by Lemma \ref{equi-class}, all the generators of $\langle x \rangle$ are in $X$. Since $1\in X$ and $k,m\geq c$, we have
\begin{equation*}
|X| \geq 1+ \phi \left(p_{2}^{k}\right)+\phi \left(2^lp_{2}^{m}\right) \geq 1+2\phi(p_{2}^{c})=1+p_{2}^{c}+p_{2}^{c-1}(p_{2}-2).
\end{equation*}
Since $p_3\geq 3$, we get that $|X| \geq 1+p_{2}^{c}+p_{2}^{c-1} >  p_{2}^{c}+p_{2}^{c-1} =|\widetilde{C}|$, a contradiction.\\

\noindent {\bf Claim-2}: $X\setminus M$ does not contain any element of order $2^t p_{2}^{k}$  with $t\geq 1$ and $k\geq c$.

Suppose that $y$ is such an element in $X\setminus M$. Then $\langle y \rangle$ is of order $2^t p_{2}^{k}$ and all the generators of $\langle y \rangle$ are in $X$ by Lemma \ref{equi-class} but none of them is in $M$. Then, as $1\in X$ and $k,m\geq c$, we have
\begin{equation*}
|X| \geq  1+\phi \left(2^l p_{2}^{m}\right)+\phi \left(2^t p_{2}^{k}\right) \geq  1+2\phi (p_{2}^{c})>p_{2}^{c}+p_{2}^{c-1}=|\widetilde{C}|,
\end{equation*}
a contradiction. \\

Note that, by Claim-2, $X$ does not contain generators of any maximal cyclic subgroup of $G$ different from $M$.  Since $1\in X$, $|P_{1}\setminus X |\geq 2^l$ and $M\cap P_1$ contains $2^{l}-1$ non-identity elements, it follows that there exists $v\in P_{1}\setminus X$ such that $v\notin M$. We shall get a contradiction by showing that $\mathcal{P}(G\setminus X)$ is connected. It is enough to show that every element of $G\setminus X$ is connected to $v$ in $\mathcal{P}(G\setminus X)$. Consider an arbitrary element $\alpha\in G\setminus X$, $\alpha\neq v$.

First assume that $\alpha\notin M$. Let $M_1=\langle \beta \rangle$ be a maximal cyclic subgroup of $G$ containing $\alpha$, where $o(\beta)=2^{l_1}p_2^{m_1}=|M_1|$ for some positive integers $l_1,m_1$ with $m_1\geq c$. Let $u\in M_1$ be such that $o(u)=p_{2}^{m_{1}}$. Since $m_1\geq c$, we have $u\notin X$ by Claim-1 and $\beta\notin X$ by Claim-2. The element $vu$ is of order $2^s p_{2}^{m_{1}}$, where $o(v)=2^s$. If $vu\in X$, then $vu\in M$ by Claim-2 and it follows that $v\in M$, a contradiction. So $vu\notin X$. Then $\alpha \sim u \sim vu \sim v$ or $\alpha \sim \beta \sim u \sim vu \sim v$ is a walk between $\alpha$ and $v$ in $\mathcal{P}(G\setminus X)$ according as $\alpha=\beta$ or not.

Now assume that $\alpha \in M\setminus X$. Then $\alpha$ is a non-generator of $M$, as $X$ contains all the generators of $M$. By Corollary \ref{coro-non-adj}, let $M_2=\langle \delta \rangle$ be a maximal cyclic subgroup of $G$ different from $M$ and containing $\alpha$, where $o(\delta)=|M_2|=2^{l_2}p_2^{m_2}$ for some positive integers $l_2,m_2$ with $m_2\geq c$. Then $\delta\notin M$ and $\alpha \sim \delta$. Let $u'\in M_2$ be such that $o(u')=p_{2}^{m_{2}}$. By a similar argument as in the previous paragraph, we have $u'\notin X$, $\delta\notin X$ and $vu'\notin X$. Then $\alpha \sim \delta \sim u^{\prime} \sim vu^{\prime} \sim v$ is a walk between $\alpha$ and $v$ in $\mathcal{P}(G\setminus X)$. This completes the proof.
\end{proof}

\begin{proof}[{\bf Proof of Theorem \ref{main-2}(iii)}]
Let $X$ be a minimum cut-set of $\mathcal{P}(G)$. By Propositions \ref{nilpotent-mincut} and \ref{M-M-2}, each of $P_1$, $P_2$ and $\widetilde{C}$ is a minimal cut-set of $\mathcal{P}(G)$. It is enough to prove that $|X|\geq \min \left\{|P_1|,|P_2|,|\widetilde{C}|\right\}$. Without loss of generality, we may assume that $X\neq P_1$ and $X\neq P_2$. Then $P_1\nsubseteq X$ and $P_2\nsubseteq X$. We shall show that $|X|\geq |\widetilde{C}|$.

If $p_{1}\geq 3$ and the subgraph $\mathcal{P}(M\setminus X)$ is disconnected for some $M\in\mathcal{M}(G)$, then $|X|> |\widetilde{M}|\geq |\widetilde{C}|$ by Lemma \ref{max-min-cut} and Proposition \ref{size-compare}.
So, for $p_{1}\geq 3$, we may suppose that $\mathcal{P}(M\setminus X)$ is connected for every $M\in\mathcal{M}(G)$. Then, for all $p_1\geq 2$, Propositions \ref{gen-connected} and \ref{2-gen-discon} imply that $X$ does not contain generators of any maximal cyclic subgroup of $G$.

Fix a separation $(A,B)$ of $\mathcal{P}(G\setminus X)$. By Proposition \ref{contain}, we may assume that both $P_1\setminus X$ and $P_2\setminus X$ are contained in $A$. Since $B\neq \emptyset$, let $M_1\in \mathcal{M}(G)$ containing an element of $B$. Then all the generators of $M_1$ are in $B$ and $X$ contains both $M_{1}\cap P_{1}$ and $M_{1}\cap P_{2}$.

Now consider an element $\beta \in \widetilde{M_{1}}\setminus(P_1\cup P_2)$. Clearly, $\beta\notin A$. First suppose that $\beta \in B$. Since $\beta$ is a non-generator of $M_{1}$, by Corollary \ref{coro-non-adj}, there exists $M_2\in \mathcal{M}(G)$ with $M_2\neq M_1$ such that $\beta \in M_{2}$. Then all the generators of $M_{2}$ are also in $B$, and $X$ contains both $M_{2}\cap P_{1}$ and $M_{2}\cap P_{2}$. Since $P_1$ is elementary abelian, we have $|C|=p_1p_2^c$, $|M_1|=p_1p_2^{m_1}$ and $|M_2|=p_1p_2^{m_2}$, where $c,m_1,m_2$ are positive integers with $m_1,m_2\geq c$. Since $M_1\cap P_1=\langle \beta\rangle\cap P_1=M_2\cap P_1$ (each of order $p_1$) and $M_1\neq M_2$, the elements of $M_{2}$ of order $p_{2}^{m_{2}}$ are not in $M_{1}$. Then
$$|(M_{2}\cap P_{2})\setminus(M_{1}\cap P_{2})| \geq \phi(p_{2}^{m_{2}})\geq \phi(p_{2}^{c})$$
and
$$|(M_{1}\cap P_{1})\cup(M_{1}\cap P_{2})|=p_{1}+p_{2}^{m_{1}}-1 \geq p_{1}+p_{2}^{c}-1.$$
Using these two inequalities and the facts that $p_2>p_1$ and $M_{i}\cap P_{1},\;M_{i}\cap P_{2}$ are contained in $X$ for $i=1,2$, we get
$|X|  \geq  \phi (p_{2}^{c})+p_{1}+p_{2}^{c}-1 >  p_{2}^{c-1}[p_1+p_2-1] =|\widetilde{C}|$.

Now suppose that every element of $\widetilde{M_{1}}\setminus (P_1\cup P_2)$ is in $X$. Since $M_{1}\cap P_{1}$ and $M_{1}\cap P_{2}$ are contained in $X$, it follows that $\widetilde{M_{1}}\subseteq X$. Then $|X|\geq |\widetilde{M_{1}}|\geq |\widetilde{C}|$ by Proposition \ref{size-compare}. This completes the proof.
\end{proof}

\section{Proof of Theorem \ref{main-3}}\label{sec-main-3}

Let $G$ be a finite non-cyclic abelian group of order $p_1^{n_1}p_2^{n_2}p_3^{n_3}$. If $p_1\geq 3$, then $2\phi(p_1p_2) > p_1p_2$ and so Theorem \ref{main-3}(iii) follows from Theorem \ref{main-1}. In the rest of this section, we assume that $p_1=2$.

\begin{proof}[{\bf Proof of Theorem \ref{main-3}(ii)}]
Suppose that $P_1$ is cyclic and that either $P_2$ or $P_3$ is non-cyclic. If $P_3$ is non-cyclic, then the result follows from Theorem \ref{main-1} as $p_3\geq 5>r+1$ with $r=3$.

Suppose that $P_2$ is non-cyclic. By Proposition \ref{nilpotent-mincut}, $P_1P_3$ is a minimal cut-set of $\mathcal{P}(G)$. Let $X$ be any minimal cut-set of $\mathcal{P}(G)$ different from $P_1P_3$. Then $P_1P_3\nsubseteq X$. Taking $D=P_1P_3$ in Proposition \ref{existence-0}, we get $M\in\mathcal{M}(G)$ such that $\mathcal{P}(M\setminus X)=\mathcal{P}(M\setminus (X\cap M))$ is disconnected. So $\kappa(\mathcal{P}(M))\leq |X\cap M|$.
It is enough to show that $|X|> |P_1P_3|$.

We have $|P_1P_3|=2^{n_{1}}p_{3}^{n_{3}}$ and $|M|=2^{n_1}p_{2}^{l}p_{3}^{n_{3}}$ for some positive integer $l<n_{2}$.
By Theorem \ref{cyclic-connectivity}(iii)(a),
\begin{eqnarray*}
\kappa(\mathcal{P}(M)) & = & \phi\left(2^{n_1}p_2^l p_3^{n_3}\right) + 2^{n_1 -1} p_2^{l -1}\left[(p_2-1)p_3^{n_3-1}+2\right]\\
& \geq & \phi(2^{n_1}p_2p_3^{n_3}) +2^{n_1 -1}p_3^{n_3-1}\phi(p_2) +2^{n_1}.
\end{eqnarray*}
So,
\begin{eqnarray*}
\kappa(\mathcal{P}(M)) - |P_1P_3| & \geq & 2^{n_1 -1}p_3^{n_3-1}\left[\phi(p_2p_3)+\phi(p_2)-2p_3\right]+2^{n_1}\\
  &  = & 2^{n_1 -1}p_3^{n_3-1}\left[(\phi(p_2)-2)p_3\right]+2^{n_1} >0
\end{eqnarray*}
Therefore, $|X|\geq |X\cap M| \geq \kappa(\mathcal{P}(M))>|P_1P_3|$. This completes the proof.
\end{proof}

We next prove Theorem \ref{main-3}(i). Suppose that $P_1$ is non-cyclic and that $P_2$ and $P_3$ are cyclic. Let $M \in \mathcal{M}(G)$. Then $|M|=2^{m}p_{2}^{n_{2}}p_{3}^{n_{3}}$ for some positive integer $m<n_{1}$. For every divisor $d$ of $|M|$, we define the following two sets:
\begin{enumerate}
\item[] $E(M,d)$ = the set of all elements of $M$ whose order is $d$,
\item[] $S(M,d)$ = the set of all elements of $M$ whose order divides $d$.
\end{enumerate}
Since $M$ is cyclic, $S(M,d)$ is the unique cyclic subgroup of $M$ of order $d$ and $E(M,d)$ is precisely the set of all generators of $S(M,d)$. Let $d_j=2^{m}p_{2}^{n_{2}}p_{3}^{j}$ for $1\leq j\leq n_3$ and define the subset $\Gamma(M)$ of $M$ as:
$$\Gamma(M)=\left(\underset{j=1}{\overset{n_{3}}{\bigcup}} E\left(M, d_j\right)\right)\bigcup S\left(M, 2^{m}p_{2}^{n_{2}-1}\right)\bigcup S\left(M,2^{m-1}p_{2}^{n_{2}}\right).$$
It can be calculated that $|\Gamma(M)|=\phi\left(|M|\right)+2^{m-1}p_{2}^{n_{2}-1}\left[(p_2 -1)p_{3}^{n_{3}-1}+2\right]$.

Set $A=E\left(M, 2^{m}p_{2}^{n_{2}}\right)$ and $B=M \setminus (A\cup \Gamma(M))$. Clearly, both $A$ and $B$ are non-empty and $A\cap B=\emptyset$. Any element of $B$ has order of the form $2^lp_2^kp_3^t,$ where $0\leq l\leq m$, $0\leq k\leq n_2$, $1\leq t \leq n_3$ and $(l,k)\neq (m,n_2)$. Let $x$ be an element of $M$ such that $o(x)=p_3$. Then $x\in B$ and every other element of $B$ is adjacent to $x$ (as $t\geq 1$). So $\mathcal{P}(B)$ is connected. Since $A$ is the set of all generators of the cyclic subgroup $S\left(M,2^{m}p_{2}^{n_{2}}\right)$, $\mathcal{P}(A)$ is a clique and so is connected. Since $(l,k)\neq (m,n_2)$ and $t\geq 1$, observe that none of the elements of $A$ can be obtained as an integral power any element of $B$ and vice-versa. Thus $(A,B)$ is a separation of $\mathcal{P}(M\setminus\Gamma(M))$ and hence $\Gamma(M)$ is a cut-set of $\mathcal{P}(M)$ with two connected components $A$ and $B$. We have
$$\kappa(\mathcal{P}(M))=\phi\left(|M|\right)+2^{m-1}p_{2}^{n_{2}-1}\left[(p_2 -1)p_{3}^{n_{3}-1}+2\right]=|\Gamma(M)|$$
by Theorem \ref{cyclic-connectivity}(iii)(a). In fact, $\Gamma(M)$ is the only minimum cut-set of $\mathcal{P}(M)$, see the second last statement of Theorem \ref{cyclic-connectivity}.

\begin{lemma} \label{max-connectivity}
For $M\in\mathcal{M}(G)$, $\Gamma(M)$ is a minimal cut-set of $\mathcal{P}(G)$ with two connected components $A=E\left(M, 2^{m}p_{2}^{n_{2}}\right)$ and $B_1=G \setminus(A\cup\Gamma(M))$. In particular, $\kappa(\mathcal{P}(G))\leq |\Gamma(M)|=\kappa(\mathcal{P}(M))$.
\end{lemma}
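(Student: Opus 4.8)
The plan is to prove the separation and connectivity claims first, conclude that $\Gamma(M)$ is a cut-set of $\mathcal{P}(G)$ with the two stated components, then verify minimality; the closing inequality is then immediate, since a cut-set gives $\kappa(\mathcal{P}(G))\leq|\Gamma(M)|$, and the equality $|\Gamma(M)|=\kappa(\mathcal{P}(M))$ has already been recorded from Theorem \ref{cyclic-connectivity}(iii)(a). Throughout I write $M=M_1P_2P_3$ via Lemma \ref{max-cyclic}, where $M_1$ is a maximal cyclic subgroup of $P_1$ of order $2^m$ (as $P_2,P_3$ are cyclic, their maximal cyclic factors are the full Sylow subgroups). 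Here $A$ is exactly the set of generators of $M_1P_2=S(M,2^mp_2^{n_2})$, so $\mathcal{P}(A)$ is a clique; moreover $1\in S(M,2^{m-1}p_2^{n_2})\subseteq\Gamma(M)$ and $G\setminus M\subseteq B_1$ because $A,\Gamma(M)\subseteq M$, so both $A$ and $B_1$ are non-empty and $A\cup B_1=G\setminus\Gamma(M)$.

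First I would show that $(A,B_1)$ is a separation, i.e. no $a\in A$ is adjacent to any $b\in B_1$. Suppose $a\sim b$. If $a\in\langle b\rangle$, then $\langle a\rangle=M_1P_2\leq\langle b\rangle$; the Sylow $2$-subgroup of the cyclic group $\langle b\rangle$ is a cyclic subgroup of $P_1$ containing $M_1$, so maximality of $M_1$ forces the $2$-part of $o(b)$ to be $2^m$, and similarly $P_2\leq\langle b\rangle$ forces the $p_2$-part of $o(b)$ to be $p_2^{n_2}$. Hence $b\in M$ with $o(b)=2^mp_2^{n_2}p_3^{c}$, placing $b$ in $A$ (if $c=0$) or in $E(M,d_c)\subseteq\Gamma(M)$ (if $c\geq1$). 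If instead $b\in\langle a\rangle=M_1P_2$, then $b\in M$ with $o(b)\mid 2^mp_2^{n_2}$, so $b\in A$ when both parts are full and otherwise $b\in S(M,2^{m-1}p_2^{n_2})\cup S(M,2^mp_2^{n_2-1})\subseteq\Gamma(M)$. In every case $b\notin B_1$, a contradiction. I expect this separation step to be the main obstacle, precisely because $B_1$ reaches outside $M$: one must use the non-cyclicity of $P_1$, through the maximality of $M_1$, to drive any neighbour of $A$ back inside $M\setminus B_1$.

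Next I would prove $\mathcal{P}(B_1)$ connected by routing every vertex through a fixed hub $x\in B_1$ with $o(x)=p_3$ (a generator of the unique order-$p_3$ subgroup of the cyclic $P_3$, and one checks $x\in B_1$). Any $\gamma\in B_1$ with $p_3\mid o(\gamma)$ satisfies $x\in\langle\gamma\rangle$, so $\gamma\sim x$. Any remaining $\gamma\in B_1$ has order coprime to $p_3$; the order analysis shows such $\gamma$ lie in $G\setminus M$, whence $\gamma x\notin M$, so $\gamma x\in B_1$ and Lemma \ref{xypath} gives $\gamma\sim\gamma x\sim x$ within $B_1$. Together with $\mathcal{P}(A)$ being a clique, this identifies $A$ and $B_1$ as the two connected components of $\mathcal{P}(G\setminus\Gamma(M))$, so $\Gamma(M)$ is a cut-set of $\mathcal{P}(G)$.

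Finally, for minimality I would verify that each $w\in\Gamma(M)$ has a neighbour in both $A$ and $B_1$; since $\mathcal{P}(A)$ and $\mathcal{P}(B_1)$ are connected, this makes $\mathcal{P}(G\setminus(\Gamma(M)\setminus\{w\}))$ connected, so $\Gamma(M)\setminus\{w\}$ is not a cut-set. A neighbour in $A$ arises because $o(w)\mid 2^mp_2^{n_2}$ gives $w\in\langle a\rangle$ for every $a\in A$, while $o(w)=2^mp_2^{n_2}p_3^{j}$ gives $w^{p_3^{j}}\in A$ with $w^{p_3^j}\in\langle w\rangle$. A neighbour in $B_1$ arises via the hub: if $p_3\mid o(w)$ then $w\sim x\in B_1$, and if $o(w)$ is coprime to $p_3$ (so $w\neq1$ lies in $S(M,2^{m-1}p_2^{n_2})\cup S(M,2^mp_2^{n_2-1})$ and hence has a non-full $2$- or $p_2$-part) then $wx\in M$ has order $2^ap_2^bp_3$ with $(a,b)\neq(m,n_2)$, so $wx\in B_1$ and $w\sim wx$ by Lemma \ref{xypath}; the case $w=1$ is trivial. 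This establishes minimality, and the displayed inequality follows as indicated above.
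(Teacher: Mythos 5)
Your proof is correct, and it follows the same global plan as the paper's (establish the separation $(A,B_1)$, prove both sides connected, then prove minimality), but each step is executed differently and in a more self-contained way. For the separation, the paper first reduces to the case of a neighbour $y\in G\setminus M$ by quoting the already-established separation $(A,B)$ of $\mathcal{P}(M\setminus\Gamma(M))$, and then derives a contradiction by showing that a second maximal cyclic subgroup $N$ containing $y$ would satisfy $N\cap P_1=M\cap P_1$ and $P_2P_3\leq M\cap N$, forcing $N=M$; your case analysis on $a\in\langle b\rangle$ versus $b\in\langle a\rangle$, pinning down the $2$-, $p_2$- and $p_3$-parts of $o(b)$ via maximality of $M_1$ in $P_1$, treats elements inside and outside $M$ uniformly. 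For connectivity of $B_1$, the paper joins each element of $G\setminus M$ to $B$ through a generator of a second maximal cyclic subgroup and then invokes connectivity of $\mathcal{P}(B)$; you instead route everything through the single hub $x$ of order $p_3$, using the observation that any $\gamma\in B_1$ of order coprime to $p_3$ must lie outside $M$, so that the path $\gamma\sim\gamma x\sim x$ stays inside $B_1$. The most genuine divergence is minimality: the paper gets it in one sentence from the fact that $\Gamma(M)$ is a minimum (hence minimal) cut-set of $\mathcal{P}(M)$, so that restoring any $w$ reconnects $A$ and $B$ already inside $M$; you check directly that every $w\in\Gamma(M)$ has a neighbour in $A$ and a neighbour in $B_1$. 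Your route is longer but elementary and makes the reconnection explicit, whereas the paper's is slicker but leans on the uniqueness/minimum-cut-set information from Theorem \ref{cyclic-connectivity} and leaves those details to the reader.
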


\begin{proof}
Note that $B_1=B\cup (G\setminus M)$, where $B$ is defined as in the above discussion. We show that $(A,B_1)$ is a separation of $\mathcal{P}(G\setminus \Gamma(M))$. Clearly, both $A$ and $B_1$ are non-empty and $A\cap B_1=\emptyset$. Suppose that $x\sim y$ for some $x\in A$ and $y\in B_1$. Then $x\in M$ with $o (x)=2^{m}p_{2}^{n_{2}}$, and $y\in B_1 \setminus B$ as $(A,B)$ is a separation of $\mathcal{P}(M\setminus\Gamma(M))$. So $y\in N$ for some $N\in\mathcal{M}(G)$ with $M\neq N$. We have $x\in \langle y \rangle$ or $y\in \langle x\rangle$. If $y\in \langle x\rangle$, then $y\in M$, a contradiction to that $y\in B_1\setminus B=G\setminus M$. Suppose that $x\in \langle y \rangle$. Then $x\in N$. The facts that $x\in M$ and $2^m\mid o (x)$ imply that $\langle x\rangle\cap P_1$ is precisely the maximal cyclic subgroup $M\cap P_1$ of $P_1$. Since $x\in N$, it follows that the maximal cyclic subgroup $N\cap P_1$ of $P_1$ must be equal to $\langle x\rangle\cap P_1$. Thus $M\cap P_1=N\cap P_1$. Since $M\cap N$ contains $P_2 P_3$, we get that $M=N$, a contradiction. Thus $\Gamma(M)$ is a cut-set of $\mathcal{P}(G)$.

We know that both $\mathcal{P}(A)$ and $\mathcal{P}(B)$ are connected. We claim that $\mathcal{P}(B_1)$ is also connected. Let $a\in B_1\setminus B$. Then $a\notin M$. So there exists $N \in \mathcal{M}(G)$ with $N\neq M$ such that $a\in N$. Let $\alpha=\alpha_1\alpha_2\alpha_3$ be a generator of $N$, where $\alpha_1\in N\cap P_1$, $\alpha_2\in P_2$ and $\alpha_3\in P_3$. Then $\alpha\in B_1$ as $\alpha\notin M$. Since $o(\alpha_2\alpha_3)= p_{2}^{n_{2}}p_{3}^{n_{3}}$, we have $\alpha_2\alpha_3\in B$.
If $a=\alpha$, then $a=\alpha\sim\alpha_2\alpha_3$ in $\mathcal{P}(B_1)$. If $a\neq \alpha$, then $a\sim\alpha\sim \alpha_2\alpha_3$ is a path in $\mathcal{P}(B_1)$. Thus every element of $B_1\setminus B$ is connected to an element of $B$ in $\mathcal{P}(B_1)$. Since $\mathcal{P}(B)$ is connected, it follows that $\mathcal{P}(B_1)$ is connected. Now the minimality of the cut-set $\Gamma(M)$ of $\mathcal{P}(G)$ follows from the fact that $\Gamma(M)$ is a minimum cut-set of $\mathcal{P}(M)$.
\end{proof}

\begin{proof}[{\bf Proof of Theorem \ref{main-3}(i)}]
Let $X$ be a minimum cut set of $\mathcal{P}(G)$. Proposition \ref{nilpotent-mincut} and Lemma \ref{max-connectivity} imply that $|X|\leq \min \left\{|P_2P_{3}|,\kappa(\mathcal{P}(C))\right\}$, where $C$ is a maximal cyclic subgroup of $G$ of minimum possible order. If $P_2P_{3}\subseteq X$, then $|X|\geq |P_2P_{3}|$. Assume that $P_2P_{3}\nsubseteq X$. Then, by taking $D=P_2P_3$ in Proposition \ref{existence-0}, we get $M\in\mathcal{M}(G)$ such that $\mathcal{P}(M\setminus X)=\mathcal{P}(M\setminus (X\cap M))$ is disconnected. We have $\kappa(\mathcal{P}(M)) \geq \kappa(\mathcal{P}(C))$, this can be observed from Theorem \ref{cyclic-connectivity}(iii)(a) as $|M|$ and $|C|$ may differ only in the power of $2$. So
$$|X| \geq |X\cap M|\geq \kappa(\mathcal{P}(M)) \geq \kappa(\mathcal{P}(C)).$$
Therefore, $\kappa(\mathcal{P}(G))=|X|= \min \left\{|P_2P_{3}|,\kappa(\mathcal{P}(C))\right\}$. Now let $|C|=2^{c}p_2^{n_2}p_3^{n_3}$ for some positive integer $c$. Using Theorem \ref{cyclic-connectivity}(iii)(a) again, we calculate that
$$\kappa(\mathcal{P}(C))-|P_2P_3|=p_{2}^{n_2-1}p_{3}^{n_{3}-1}\left[\left(2^{c-1}\phi(p_2)-p_2\right)p_3 \right]+2^c p_2^{n_2 -1}.$$
If $c>1$, then $2^{c-1}\phi (p_{2})> p_{2}$ as $p_2\geq 3$. It follows that $\kappa(\mathcal{P}(C))>|P_2P_3|$ and so $\kappa(\mathcal{P}(G))=\min \left\{|P_2P_{3}|,\kappa(\mathcal{P}(C))\right\}=|P_2P_3|$.
If $c=1$, then
\begin{eqnarray*}
\kappa(\mathcal{P}(C))-|P_2P_3|& = &p_{2}^{n_2-1}p_{3}^{n_{3}-1}\left[(\phi(p_2)-p_2)p_3 \right]+2 p_2^{n_2 -1}\\
  & = & p_{2}^{n_2-1}\left[2 - p_3^{n_3}\right] < 0.
\end{eqnarray*}
So $\kappa(\mathcal{P}(C))<|P_2P_3|$ and hence $\kappa(\mathcal{P}(G))=\min \left\{|P_2P_{3}|,\kappa(\mathcal{P}(C))\right\}=\kappa(\mathcal{P}(C))$. This completes the proof.
\end{proof}

\vskip .5cm

\noindent{\bf Addresses}:\\

\noindent Sriparna Chattopadhyay, Kamal Lochan Patra, Binod Kumar Sahoo\\

\noindent 1) School of Mathematical Sciences,\\
National Institute of Science Education and Research (NISER), Bhubaneswar,\\
P.O.- Jatni, District- Khurda, Odisha - 752050, India\medskip

\noindent 2) Homi Bhabha National Institute (HBNI),\\
Training School Complex, Anushakti Nagar,\\
Mumbai - 400094, India,\medskip

\noindent {\bf E-mails}: sriparna@niser.ac.in, klpatra@niser.ac.in, bksahoo@niser.ac.in\\
\end{document}